\documentclass[11pt,reqno]{amsart}
\usepackage[margin=1in]{geometry}
\usepackage{amsmath,amssymb,amsthm,mathtools,microtype}
\usepackage[hidelinks]{hyperref}
\usepackage{zref-clever}
\zcsetup{cap=true,abbrev=false,labelhook=true}
\numberwithin{equation}{section}
\newtheorem{theorem}{Theorem}[section]
\newtheorem{lemma}[theorem]{Lemma}
\newtheorem{proposition}[theorem]{Proposition}
\newtheorem{corollary}[theorem]{Corollary}
\AddToHook{env/theorem/begin}{\zcsetup{reftype=theorem}}
\AddToHook{env/lemma/begin}{\zcsetup{reftype=lemma}}
\AddToHook{env/proposition/begin}{\zcsetup{reftype=proposition}}
\AddToHook{env/corollary/begin}{\zcsetup{reftype=corollary}}
\newcommand{\E}{\mathbb E}
\newcommand{\Pp}{\mathbb P}
\newcommand{\1}{\mathbf 1}
\DeclareMathOperator{\Var}{Var}
\DeclareMathOperator{\Cov}{Cov}
\DeclareMathOperator{\TV}{TV}
\DeclareMathOperator{\Sym}{Sym}

\newcommand{\norm}[1]{\lVert#1\rVert}
\title[Nearly-minimax variance estimation]{Nearly-minimax variance estimation under rough random design}
\author{P. M. Aronow}
\author{Patrick Lopatto}
\thanks{Authors are listed in alphabetical order.}
\date{7 September 2026}
\subjclass[2020]{62G08, 62C20}
\keywords{variance estimation, nonparametric regression, random design, nearly-minimax estimation}
\hypersetup{pdftitle={Nearly-minimax variance estimation under rough random design},pdfauthor={P. M. Aronow and Patrick Lopatto}}
\begin{document}
\begin{abstract}
We identify the minimax exponent for constant conditional variance
estimation under rough random design.  The unknown design density is bounded above
and away from zero, with no smoothness assumption, and the conditional error
laws may depend on the covariates and have uniformly bounded fourth
moments.
For an $s$-H\"older regression
function with $s>1$ in dimension $d>4s$, the minimax root-mean-square
risk lies between $cn^{-2(s+1)/(d+4)}e^{-C\sqrt{\log n}}$ and
$Cn^{-2(s+1)/(d+4)}$. These bounds show that the rate proposed by Robins is not
uniformly attainable over this model class. For $0<s\le1$, we show the exact minimax rate is
$n^{-1/2}\vee n^{-4s/(d+4s)}$; for $s>1$ and $d\le4s$, it is
$n^{-1/2}$. 
\end{abstract}
\maketitle
\tableofcontents

\section{Introduction}\label{sec:introduction}
We consider a problem arising from the study of higher-order influence
functions for nonlinear semiparametric functionals
\cite{RobinsEtAl2008,RobinsEtAl2009EJS}. We observe independent copies
of $(X,Y)$, where $X\in[0,1]^d$ has an unknown density $p$ bounded
above and away from zero, and let
\[
 f(x)=\E(Y\mid X=x),\qquad \Var(Y\mid X=x)=V.
\]
The regression function belongs to a fixed $s$-H\"older ball. Let
$Q_x$ denote the conditional law of $\varepsilon=Y-f(X)$ given $X=x$.
It has mean zero, variance $V$, and uniformly bounded fourth moment,
but may otherwise depend on $x$. No smoothness is imposed on $p$.
We aim to estimate $V$, which satisfies
\begin{equation}\label{eq:variance-quadratic-identity}
 V=\E(Y^2)-\int_{[0,1]^d}f(x)^2p(x)\,\mathrm dx.
\end{equation}
Under the uniform H\"older-norm and conditional fourth-moment bounds,
the empirical mean of $Y^2$ estimates $\E(Y^2)$ at the root-$n$ rate.
Thus the only potentially non-root-$n$ component in
\eqref{eq:variance-quadratic-identity} is the quadratic regression
functional. Quadratic regression functionals are studied in
\cite{HuangFan1999,LiuEtAl2021}. Higher-order influence functions and
semiparametric minimax lower bounds for related nonlinear functionals
are developed in \cite{RobinsEtAl2008,RobinsEtAl2009EJS,RobinsEtAl2017}.
The non-root-$n$ results in \cite[Section~3.3]{LiuEtAl2021} assume that
the  design density belongs to a H\"older class of smoothness
strictly greater than $2(1+\epsilon)s$, in addition to being bounded
above and away from zero.

For $0<s<1$ and $d>4s$, Robins et al.\ construct an estimator with
root-mean-square rate $n^{-4s/(d+4s)}$; see
\cite[Section~4]{RobinsEtAl2008}. Their argument uses the identity
\[
 \E\left[\frac{(Y_i-Y_j)^2}{2}\,\middle|\,X_i=x,X_j=y\right]
 =V+\frac{(f(x)-f(y))^2}{2}.
\]
This identity motivates averaging squared differences over pairs whose
covariates are within distance $h$. The expected number of such pairs
is of order $n^2h^d$. Averaging the corresponding squared differences
produces a stochastic fluctuation of order $(n^2h^d)^{-1/2}$, while
$s$-H\"older smoothness gives bias $h^{2s}$. Balancing the two terms
gives $h\asymp n^{-2/(d+4s)}$ and the stated rate. Difference-based
estimators for multivariate fixed-design regression are studied by
Munk et al.\ \cite{MunkEtAl2005}. For $s$-H\"older regression on an
equally spaced $d$-dimensional grid, the corresponding root-mean-square
minimax rate is $n^{-2s/d}\vee n^{-1/2}$. The matching lower bounds
are established by Wang et al.\ \cite{WangBrownCaiLevine2008} in one
dimension and Cai et al.\ \cite{CaiLevineWang2009} in multiple
dimensions.

Shen et al.\ \cite{ShenGaoWittenHan2020} also study variance estimation
under random design, with a common error law independent of $X$.
Their multivariate upper-bound theorem is restricted to H\"older
smoothness at most one in each coordinate, so their results do not resolve
the higher-smoothness case considered here. Their lower-bound construction also uses design densities that
vanish on sets of positive measure, so it does not yield a lower bound for
the present class. Conversely, the present lower-bound construction uses
error laws $Q_x$ that vary with $x$, so it does not establish the same
lower bound for a model with a common error law independent of $X$.

For the present random-design problem, we consider the differentiable case, which has remained open. Richardson and Rotnitzky
\cite[Section~6]{RichardsonRotnitzky2014} record the following question,
attributed to James Robins: when $s>1$ but $s/d<1/4$, does there still
exist an estimator attaining $n^{-4s/(d+4s)}$? We study the minimax
formulation of this question, requiring a uniform root-mean-square
risk bound over the model class specified below. The squared-difference
estimator of Robins et al.\ does not resolve the question: for $s>1$,
its worst-case bias is generally of order $h^2$, rather than $h^{2s}$.
This shows only that the estimator fails to exploit the additional
smoothness; it does not rule out that another estimator may attain a
better rate.

For $s>1$ and $d>4s$, we identify the minimax root-mean-square exponent
\begin{equation}\label{eq:lambda}
 \lambda=\frac{2(s+1)}{d+4}.
\end{equation}
The minimax root-mean-square risk lies between
\(cn^{-\lambda}e^{-C\sqrt{\log n}}\) and \(Cn^{-\lambda}\).
These bounds show that the rate \(n^{-4s/(d+4s)}\) is not uniformly
attainable over the stated model class, giving a negative answer to
the minimax formulation of Robins's question. The upper
bound also improves on the fixed-design minimax rate.\footnote{An earlier version of this paper, circulated under
the title ``On Rates Attainable under Random Design: A Negative Answer
to a Problem of Robins'' \cite{AronowLopatto2026}, showed that the
proposed rate is not uniformly attainable and established the exact
minimax rate for $0<s\le1$. This version
adds an estimator and strengthens the high-smoothness lower bound to
identify the minimax exponent, and supplies a shorter proof of the
low-smoothness result.}
The ratio of the upper and lower bounds is at most $Ce^{C\sqrt{\log n}}$.
For $0<s\le1$, we give a direct proof of the exact minimax rate
$n^{-1/2}\vee n^{-4s/(d+4s)}$. For $s>1$ and $d\le4s$, the same
interpolation estimator attains the parametric rate $n^{-1/2}$.

\subsection{Main results}\label{sec:main-result}
Fix \(s>0\) and an integer \(d\ge 1\). Fix an open set
\(\mathcal U\subset\mathbb R^d\) containing \([0,1]^d\), and constants
\begin{equation}\label{e:fix-constants}
 0<p_-<1<p_+<\infty,\qquad
 L_s>0,\qquad
 0<v_-<v_+<\infty,\qquad
 v_-^2<C_4<\infty.
\end{equation}

We adopt the following notation for H\"older norms.
Set
\begin{equation}\label{eq:holder-indices}
 \ell=\lceil s\rceil-1,
 \qquad \alpha=s-\ell\in(0,1].
\end{equation}
For a scalar function \(F\) on an open set
\(\Omega\subset\mathbb R^d\), define
\begin{align}
 [D^\ell F]_{\alpha;\Omega}
 &=\max_{|\gamma|=\ell}
   \sup_{\substack{u,v\in\Omega\\u\ne v}}
   \frac{|\partial^\gamma F(u)-\partial^\gamma F(v)|}
        {\|u-v\|^\alpha},
 \label{eq:holder-seminorm}\\
 \norm{F}_{C^s(\Omega)}
 &=\sum_{|\gamma|\le\ell}
   \norm{\partial^\gamma F}_{L^\infty(\Omega)}
   +[D^\ell F]_{\alpha;\Omega},
 \label{eq:holder-norm}
\end{align}
where \(\|\cdot\|\) is the usual Euclidean norm on \(\mathbb R^d\).
Thus, when \(s\notin\mathbb N\), this is the usual
\(C^{\ell,\alpha}\) norm, while integer \(s\) is interpreted as
\(C^{s-1,1}\).

For each \(n\in\mathbb N\), one observes independent and identically
distributed pairs \((X_i,Y_i)_{i=1}^n\).
Their common distribution is indexed by a parameter
\(\theta=(p,f,V,Q)\) satisfying the following three conditions.
Here, \(p\) is a probability density on \([0,1]^d\),
\(f\colon[0,1]^d\to\mathbb R\) is a regression function,
\(V>0\), and \(Q=(Q_x)_{x\in[0,1]^d}\) is a measurable
probability kernel from \([0,1]^d\) to \(\mathbb R\).
Define the regression error by
\(\varepsilon_i=Y_i-f(X_i)\).

\begin{enumerate}
\item[\textup{(i)}]
Each \(X_i\) has Lebesgue density \(p\) on \([0,1]^d\), and \(p\)
satisfies
\[
 p_-\le p(x)\le p_+
 \quad\text{for Lebesgue-a.e. }x\in[0,1]^d.
\]

\item[\textup{(ii)}]
The function \(f\) has an extension \(\widetilde f\) to
\(\mathcal U\) such that
\[
 \norm{\widetilde f}_{C^s(\mathcal U)}\le L_s,
\]
where the norm is defined in \eqref{eq:holder-norm}.

\item[\textup{(iii)}]
For \(p(x)\,\mathrm dx\)-almost every \(x\in[0,1]^d\),
the conditional distribution of \(\varepsilon_i\) given \(X_i=x\)
is \(Q_x\), and
\[
 \int_{\mathbb R}u\,Q_x(\mathrm du)=0,\qquad
 \int_{\mathbb R}u^2\,Q_x(\mathrm du)=V,\qquad
 \int_{\mathbb R}u^4\,Q_x(\mathrm du)\le C_4,
\]
with
\[
 v_-\le V\le v_+.
\]
The conditional variance \(V\) is constant in \(x\), but the
conditional distribution \(Q_x\) may otherwise depend arbitrarily
on \(x\).
\end{enumerate}

Conditional Jensen's inequality gives \(V^2\le C_4\), so the
effective variance interval is
\([v_-,\min\{v_+,\sqrt{C_4}\}]\).
The assumption \(v_-^2<C_4\) ensures that this interval is
nondegenerate.

Let
\[
 \Theta
 =\Theta(s,d,\mathcal U,p_-,p_+,L_s,v_-,v_+,C_4)
\]
denote this class of parameters \(\theta\).
For an estimator \(\widehat V_n\) of \(V\), define
\[
 \mathcal R_n(\widehat V_n;\Theta)
 =
 \sup_{\theta\in\Theta}
 \E_\theta\bigl(\widehat V_n-V\bigr)^2,
 \qquad
 R_n(\Theta)
 =
 \inf_{\widehat V_n}
 \mathcal R_n(\widehat V_n;\Theta),
\]
where the infimum is over all measurable functions of the
observations \((X_i,Y_i)_{i=1}^n\).
We write
\[
 \mathfrak r_n
 =R_n(\Theta)^{1/2}
 =\inf_{\widehat V_n}\sup_{\theta\in\Theta}
   \left\{\E_\theta\bigl(\widehat V_n-V\bigr)^2\right\}^{1/2}
\]
for the minimax root-mean-square risk.

We now state our four main results. The constants in these results depend only on the fixed model
parameters, including the extension domain $\mathcal U$, and  $c_\epsilon$ may
also depend on $\epsilon$.
\begin{theorem}\label{thm:main}\label{thm:bracket}
Let $s>1$ and let $d>4s$ be an integer. There exist $c,C>0$ such
that, for all sufficiently large $n$,
\begin{equation}\label{eq:main-bracket}
 c n^{-\lambda}e^{-C\sqrt{\log n}}
 \le\mathfrak r_n\le Cn^{-\lambda},
 \qquad \lambda=\frac{2(s+1)}{d+4}.
\end{equation}
\end{theorem}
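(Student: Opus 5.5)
The plan is to prove the two inequalities in \eqref{eq:main-bracket} separately. The upper bound will come from an explicit estimator, and the lower bound from a fuzzy-hypothesis (prior-mixture) argument. The starting point is a heuristic that pins down the exponent. At bandwidth $h\asymp n^{-2/(d+4)}$ the pair-averaging fluctuation $(n^2h^d)^{-1/2}=n^{-1}h^{-d/2}$ equals $h^2$. Moreover, $n^{-\lambda}=h^{s+1}$. So the target is an estimator whose bias is $h^{s+1}$ rather than the $h^2$ of the squared-difference estimator, with fluctuation $h^2\cdot h^{s-1}$ or better. Equivalently, first-order (gradient) information should be used at scale $h$, with an unavoidable $s$-H\"older remainder coupled to a linear term of size $h$.

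\emph{Upper bound.} I would use sample splitting. On the first half, build a pilot local-polynomial (degree $\ell$) estimate $\widehat f$, together with its gradient, at a coarser bandwidth. Since $p$ is only bounded above and below, only local-polynomial reproduction is available, not smoothness of $p$, so the pilot must be stable under rough design. On the second half, form the U-statistic
\[
 \frac{1}{\binom n2}\sum_{i<j}K_h(X_i-X_j)\,
 \tfrac12\bigl(R_i-R_j\bigr)^2 ,
\]
with residuals $R_i=Y_i-\widehat f(X_i)$, normalised by an empirical estimate of $\E K_h(X_1-X_2)$. Conditionally on the first half, the bias is $\E[K_h(r(X_i)-r(X_j))^2]/2$ with $r=f-\widehat f$. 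Taylor expanding $r$ to first order gives $|r(x)-r(y)|\lesssim h\,|\nabla r|+h^{s}$. The plan is to show that the cross term averages to size $h^{s+1}$ and the square of the gradient error to $h^2\cdot o(1)$, after tuning the pilot bandwidth. The variance is controlled by the Hoeffding decomposition: the degenerate part contributes $(n^2h^d)^{-1}\asymp h^4$, and the linear part contributes $n^{-1}$, using the fourth-moment bound $C_4$. The condition $d>4s$ ensures $h^{s+1}\gg n^{-1/2}$. I expect the delicate point on this side to be bounding the pilot's gradient error uniformly over rough $p$. If the naive bound is too weak, I would try replacing the pilot by a local interpolation from $\ell+1$-point stencils built from nearest neighbours.

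\emph{Lower bound.} I would construct two priors $\pi_0,\pi_1$ on $\Theta$ whose variances differ by $\asymp h^{s+1}e^{-C\sqrt{\log n}}$ and whose induced $n$-sample mixtures have bounded $\chi^2$ (or TV) distance. The bandwidth is $h\asymp n^{-2/(d+4)}$, adjusted by $e^{\pm C\sqrt{\log n}}$ factors. The cells are of side $h$, and there are $m\asymp h^{-d}$ of them. In each cell, independently, place a random perturbation $f=f_0+\sum_k \xi_k h^{s}\phi((x-z_k)/h)$, together with a compensating perturbation of the design density $p$ (allowed since $p$ has no smoothness). Also choose an $x$-dependent error law $Q_x$ so that the joint law of $(X,Y)$ inside a cell under the two hypotheses agrees to high order. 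Concretely, the conditional distribution of $Y$ given $X$ under the mixture over $\xi$ should match moments up to order $K$, while the second moment shifts $V$ by the target amount. The $\chi^2$ divergence of the $n$-fold product then factorises over cells, and each cell contributes roughly $(nh^d\,\delta)^{K}/K!$. Taking $K\asymp\sqrt{\log n}$ and accepting a loss $e^{-C\sqrt{\log n}}$ in the separation makes the total $m\cdot(\cdots)\le 1$. Le Cam's two-fuzzy-hypotheses bound then yields the lower bound.

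The main obstacle is this moment-matching construction. It must produce two laws of $(X,Y)$ on a cell that share $K$ moments jointly in $X$ and $Y$. It must simultaneously keep the perturbed $f$ in the H\"older ball, $p$ within $[p_-,p_+]$, each $Q_x$ centred with variance exactly $V$ and fourth moment at most $C_4$, and force a variance gap of order $h^{s+1}$ up to the subexponential factor. I would first try Chebyshev/Gauss-quadrature type discrete mixing laws for the bump amplitude. For these, the moment-matching degree versus support-size trade-off naturally gives the $\sqrt{\log n}$ scaling.
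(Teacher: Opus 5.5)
\textbf{Upper bound.} This part of your plan does not reach $n^{-\lambda}$. By your own count, at $h\asymp n^{-2/(d+4)}$ the degenerate part of the U-statistic has variance $(n^2h^d)^{-1}\asymp h^4=n^{-8/(d+4)}$. That already exceeds the target squared risk $h^{2(s+1)}=n^{-4(s+1)/(d+4)}$, because $s>1$.

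Enlarging $h$ does not rescue a sample-split pilot. Conditionally on the first half, the pilot error $r=f-\widehat f$, including the pilot's own noise, is frozen. The bias is then the local mean-square oscillation of $r$ at scale $h$, roughly $h^2\int|\nabla r|^2$. Set $H=n^{-1/d}$ and $N\asymp n^{(d-4s)/(d(d+4))}$. Keeping $(n^2h^d)^{-1}\le n^{-2\lambda}$ forces $h\gtrsim H/N$. Since $n^{-\lambda}=H^{2s}N^{-2}$, a bias of order $n^{-\lambda}$ would then require estimating $\nabla f$ in $L^2$ to accuracy $H^{s-1}=n^{-(s-1)/d}$. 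That is faster than the minimax rate $n^{-(s-1)/(2s+d)}$ for gradients over a $C^s$ ball.

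The missing idea is to keep the interpolation noise inside the statistic rather than conditioning on it. In the paper, each block of side $H$ supplies one stencil of $q=(\ell+1)^d$ points and a pair $x,x'$ within distance $\lesssim H/N$. The noise part of $D_b$ has conditional variance exactly $VA_b$, so $\E(T_b\mid X_{1:n})=V+a_b^2/A_b$. Polynomial reproduction gives $|a_b|\lesssim H^{s-1}\cdot H/N$, and the $\asymp nN^{-d}$ usable blocks give variance of order $N^d/n$. Your nearest-neighbour fallback points in this direction. It only works once the stencil responses come from the same sample and are normalized out through $A_b$.

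\textbf{Lower bound.} The sketch has the same scale problem and omits the central mechanism. Suppose the one-observation laws of $(X,Y)$ under the two mixtures agree, as they must up to $O(n^{-1/2})$. Then $V_+-V_-$ is a difference of density-weighted second moments of the perturbation. With amplitude $h^s$ it is $O(h^{2s})$, which at $h\asymp n^{-2/(d+4)}$ is $n^{-4s/(d+4)}$, far below $h^{s+1}$.

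The real obstruction appears for two observations at distance $\ll h$. The cross moments $\E_\pm[p(x_1)p(x_2)f(x_1)f(x_2)]$ must agree, while $\E_\pm[p(x)f(x)^2]$ must differ by the variance shift. With a fixed design density this is impossible, because a $C^s$ random field has a continuous covariance. Quadrature mixing of the amplitudes $\xi_k$ only produces smooth covariance differences $\sum_{k,l}c_{kl}\phi((x-z_k)/h)\phi((y-z_l)/h)$. The leftover pair discrepancy brings you back to Robins's rate $n^{-4s/(d+4s)}$.

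You do mention perturbing $p$, but not what it must achieve. Because the random design density weights every observed point, the $k$-point density-weighted moments can depend on the whole configuration. The paper uses this to realize configuration-dependent interpolation matrices in the frame $(\phi_a)$, with $\phi(U_i)^TE_{k,N}(\mathbf U)\phi(U_j)=w_i\1_{\{i=j\}}$. These are built from Hermite-score perturbations of Gaussian-matched bounded coefficients, consistency projections across counts, and a positive fixed-point prior. Their cost $C^kN^2$ forces $\delta=cN^{-2}$ with $c\le aD^{-M}$. Their defect $C^kN^{-d}$ produces the leading Hellinger term $\nu^2h^d\kappa^4\delta^2N^{-d}$. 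At occupancy $nh^d\approx e^{-2\sqrt{\log n}}$ this gives $\Delta=\kappa^2\delta\approx h^{2s}N^{-2}\approx n^{-\lambda}$, up to $e^{-C\sqrt{\log n}}$.

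Two further points. Independent local density perturbations do not give a normalized density, and the fixed-$n$ $\chi^2$ does not factor over cells. The paper Poissonizes, tilts the density-state law by a common exponential factor to cancel the random mass, and pays $\nu h^d\eta^4$ to renormalize. Also, the factor $e^{-C\sqrt{\log n}}$ does not come from a quadrature support-size trade-off. It comes from balancing $c\le aD^{-M}$ against the occupancy needed to make connected components with more than $M$ points negligible.
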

These bounds determine the minimax exponent. We state this as a corollary.
\begin{corollary}\label{thm:exponent}
Let $s>1$ and let $d>4s$ be an integer. For every $\epsilon>0$, there
exist $c_\epsilon,C>0$ such that, for all sufficiently large $n$,
\[
 c_\epsilon n^{-\lambda-\epsilon}\le\mathfrak r_n\le Cn^{-\lambda}.
\]
The minimax root-mean-square exponent satisfies
\[
 \lim_{n\to\infty}\frac{-\log\mathfrak r_n}{\log n}
 =\frac{2(s+1)}{d+4}.
\]
\end{corollary}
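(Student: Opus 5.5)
The corollary follows from \zcref{thm:bracket} by elementary manipulations of the bracket \eqref{eq:main-bracket}; no new probabilistic input is needed. I take the constants $c,C>0$ and the threshold $n_0$ from \zcref{thm:bracket}, so that for $n\ge n_0$
\[
 c n^{-\lambda}e^{-C\sqrt{\log n}}\le\mathfrak r_n\le Cn^{-\lambda}.
\]
The upper bound in the corollary is exactly the right-hand inequality, with the same $C$.

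For the lower bound, fix $\epsilon>0$. I compare $e^{-C\sqrt{\log n}}$ with $n^{-\epsilon}=e^{-\epsilon\log n}$. Since $C\sqrt{\log n}\le\epsilon\log n$ is equivalent to $\log n\ge (C/\epsilon)^2$, we have $e^{-C\sqrt{\log n}}\ge n^{-\epsilon}$ for all $n\ge n_1(\epsilon):=\max\{n_0,\lceil e^{(C/\epsilon)^2}\rceil\}$. Hence $\mathfrak r_n\ge c\,n^{-\lambda-\epsilon}$ for $n\ge n_1(\epsilon)$, and one may take $c_\epsilon=c$. Because the statement only claims the bound for sufficiently large $n$, no further adjustment is needed. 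If one insisted on all $n$, positivity of $\mathfrak r_n$ for finitely many $n$ would be required, which I would avoid.

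For the exponent, take logarithms in the bracket for $n\ge n_0$ and divide by $\log n$:
\[
 \lambda-\frac{\log C}{\log n}\le\frac{-\log\mathfrak r_n}{\log n}\le \lambda+\frac{C\sqrt{\log n}-\log c}{\log n}.
\]
Both outer terms tend to $\lambda=2(s+1)/(d+4)$, since $\sqrt{\log n}/\log n\to0$, so the limit exists and equals $\lambda$ by squeezing. The lower bound guarantees $\mathfrak r_n>0$, so the logarithm is defined. The only point requiring care is this last observation and the bookkeeping of thresholds; there is no genuine obstacle, since all the difficulty is contained in \zcref{thm:bracket}.
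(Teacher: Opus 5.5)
Your deduction is correct. Given the bracket in \zcref{thm:bracket}, the comparison $e^{-C\sqrt{\log n}}\ge n^{-\epsilon}$ for $\log n\ge (C/\epsilon)^2$ and the squeeze on $-\log\mathfrak r_n/\log n$ are exactly right. This is also the logical relationship the introduction announces when it calls the statement a corollary.

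The paper, however, also gives the corollary's lower bound an independent proof in \zcref{sec:fixed-order-proof}, which does not pass through \zcref{thm:bracket}. That proof proceeds as follows:
\begin{itemize}
\item It fixes a matching order $M$ depending on $\epsilon$. $M$ is chosen large enough that the interpolation cutoff $N\asymp\nu^{b_M}$ still grows, and that the sparsity loss $4(s-1)/(M(d+4))$ is below $\epsilon/2$.
\item It takes $R=M$, so there is no response-truncation remainder.
\item It uses the elementary interpolation of \zcref{lem:simple-geometry}, accepting its $(1+\log N)^{M-2}$ losses.
\item It chooses $\nu h^d\asymp\nu^{-2/M}$, so that components with more than $M$ points are negligible.
\end{itemize}
Both the logarithmic loss and the sparsity loss are then absorbed into $n^{-\epsilon}$.

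Your route buys brevity. It also gives a constant $c_\epsilon=c$ that does not depend on $\epsilon$; only the threshold on $n$ does.

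The paper's route buys robustness. The exponent statement then rests only on the fixed-order construction. It does not depend on the uniform-in-$k$ interpolation estimates of \zcref{lem:geometry}, the growing order $M\asymp\sqrt{\log n}$, or the truncation-remainder bounds that \zcref{thm:bracket} needs.

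As a side remark, your hesitation about small $n$ is unnecessary. The parametric lower bound in \zcref{sec:lower-exponent} gives $\mathfrak r_n\ge cn^{-1/2}>0$ for every $n$, although the statement only requires large $n$ anyway.
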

The separation from the conjectured exponent in \cite[Section~6]{RichardsonRotnitzky2014} is
\begin{equation}\label{eq:intro-gap}
 \frac{4s}{d+4s}-\lambda
 =\frac{2(s-1)(d-4s)}{(d+4s)(d+4)}>0.
\end{equation}
The lower bound therefore rules out a uniform root-mean-square risk
bound of order \(n^{-4s/(d+4s)}\), even after multiplication by any fixed
power of \(\log n\). The exponent also satisfies
\[
 \lambda-\frac{2s}{d}=\frac{2(d-4s)}{d(d+4)}>0,
\]
so the upper bound is faster than the fixed-design minimax rate.

For \(0<s\le1\), the exact minimax rate can be determined without the
higher-order interpolation construction; see \zcref{app:small-s}.
\begin{theorem}\label{thm:main2}
Let $0<s\le1$ and let $d\ge1$ be an integer. There exists $C>0$
such that, for every $n\ge1$,
\begin{equation}\label{eq:main-rate}
 C^{-1}\bigl(n^{-1/2}\vee n^{-4s/(d+4s)}\bigr)
 \le\mathfrak r_n\le
 C\bigl(n^{-1/2}\vee n^{-4s/(d+4s)}\bigr).
\end{equation}
\end{theorem}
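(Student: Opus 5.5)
We prove \zcref{thm:main2} by matching an explicit upper bound with two separate lower bounds, one for each term of the maximum.

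\textbf{Upper bound.} We use the squared-difference $U$-statistic of Robins et al. We split the sample in half. Let $\{X_i\}$ be the first half and $\{X_j\}$ the second. Set
\[
 \widehat V=\frac{\sum_{i\ne j}K_h(X_i-X_j)(Y_i-Y_j)^2/2}{\sum_{i\ne j}K_h(X_i-X_j)},
\]
with $K_h=\1\{\|\cdot\|\le h\}$ and $h=n^{-2/(d+4s)}\wedge 1$. When $d\le 4s$ we take $h$ of order $n^{-1/(2s)}$ or smaller, subject to $n^2h^d\gtrsim n$.

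Because the denominator is random, we analyse the ratio as follows. The denominator concentrates around $n^2\int\int K_h p\,p\asymp n^2h^d$, using $p\ge p_-$ and the fact that the cube has positive density of $h$-balls near the boundary. We work on the event that the denominator is at least half its mean. The complement of this event has exponentially small probability by a Bernstein inequality for $U$-statistics. On that complement we truncate $\widehat V$ to $[v_-,v_+]$, which keeps the mean-square error bounded.

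Conditionally on the design, the identity in the introduction gives bias
\[
 \frac{\sum K_h\,(f(X_i)-f(X_j))^2/2}{\sum K_h}\le L_s^2h^{2s},
\]
since $s\le1$ makes $f$ Hölder of order $s$. The conditional variance has two parts. The first is a linear (Hájek) part of order $n^{-1}$, controlled by the fourth-moment bound $C_4$ and the bounded number of neighbours relative to the total. The second is a degenerate part of order $(n^2h^d)^{-1}$. Balancing $h^{4s}$ against $(n^2h^d)^{-1}$ gives the rate $n^{-4s/(d+4s)}$ in root-mean-square; the linear part contributes $n^{-1/2}$. Controlling the variance given the random design requires bounding $\sum_j K_h(X_i-X_j)$ uniformly, which again holds with overwhelming probability.

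\textbf{Parametric lower bound.} We fix $f\equiv0$ and $p\equiv1$, and take $Q_x=N(0,V)$ for two values $V$ and $V+n^{-1/2}$ in the interior of the allowed interval. These two hypotheses have Hellinger distance $O(1)$ over $n$ observations, so Le Cam's two-point bound yields $\mathfrak r_n\ge cn^{-1/2}$.

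\textbf{Nonparametric lower bound.} This is the main step. We mix over bumps, as in the Wang et al.\ and Cai et al.\ constructions, but exploit the freedom that $Q_x$ may depend on $x$.

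Partition $[0,1]^d$ into $m=h^{-d}$ cubes with $h=n^{-2/(d+4s)}$. Under the null hypothesis, take $f\equiv0$ and variance $V_0+\delta$ with $\delta=c h^{2s}$, using a fixed error law. Under the alternative, take variance $V_0$ and set
\[
 f=\sum_k\xi_k L h^s\phi((x-x_k)/h),
\]
with independent Rademacher signs $\xi_k$. Design the error laws so that, within each cube, the alternative mixture over $\xi_k$ matches the null law. Specifically, the alternative $Y$ given $x$ is a sign mixture $\pm a(x)+\varepsilon$, with $a(x)=Lh^s\phi$. Choose $Q_x$ under the null to be the law of $\xi a(x)+\varepsilon'$ rescaled to variance $V_0+\delta$. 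The mismatch between the two models is then that $a(x)^2$ varies with $x$ while $\delta$ is constant. To absorb it, let the alternative error variance profile equal $V_0+\delta-a(x)^2$; this is not allowed, since $V$ must be constant.

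The fix is to match only first and second moments pointwise, and to control the remaining discrepancy via $\chi^2$ distances between the null and the mixture. The shared sign $\xi_k$ across the roughly $nh^d$ points in each cube creates correlations. The single-cube $\chi^2$ divergence is $O((nh^d)^2h^{8s})$ after the first-order terms cancel. Summing over the $m$ cubes gives $m\,n^2h^{2d}h^{8s}=n^2h^{d+8s}$, which is $O(1)$ exactly when $h=n^{-2/(d+8s)}$. We then recalibrate $\delta$ to match, and we expect this recalibration to recover the exponent $4s/(d+4s)$ once the fourth moments are also matched. The choice of $x$-dependent $Q_x$, which cancels the second-order cumulants and thereby yields the needed $\chi^2$ bound, is the main obstacle.
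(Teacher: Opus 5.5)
\textbf{The nonparametric lower bound has a genuine gap.} This is the substantive half of the theorem.

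With disjoint bumps $a(x)=Lh^s\phi((x-x_k)/h)$, the sign mixture has $\E[Y^2\mid X=x]=V_{\rm alt}+a(x)^2$, whereas the null has the constant value $V_0+\delta$. No choice of error laws $Q_x$ can repair this, because constancy of $V$ pins $\E[Y^2\mid X=x]$ under both hypotheses. So your next step, ``match only first and second moments pointwise,'' is not available in your construction.

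The leftover mismatch is not a higher-order effect. It has size $\asymp h^{2s}$ on a fixed fraction of every cube. Hence the one-observation mixture is at $\chi^2$-distance $\gtrsim h^{4s}$ from the null. Write $1+\chi^2=\E_{\xi,\xi'}(1+A(\xi,\xi'))^n$ with $A(\xi,\xi')=\E_{\rm null}[(L_\xi-1)(L_{\xi'}-1)]$, whose mean is that one-observation $\chi^2$. Jensen's inequality then gives $1+\chi^2\ge(1+ch^{4s})^n$, and $nh^{4s}=n^{(d-4s)/(d+4s)}\to\infty$ at $h=n^{-2/(d+4s)}$ when $d>4s$. In fact the hypotheses are distinguishable: under uniform design, $\sum_iY_i^2g(X_i)$, with $g$ the mean-zero part of the grid profile $\phi^2$, has mean gap $\asymp nh^{2s}\gg\sqrt n$.

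Your $\chi^2$ bookkeeping is also inaccurate where moments do match. The linear terms cancel in the one-observation marginal but not in the pair interaction, since $\Cov_\xi(f(x_1),f(x_2))=a(x_1)a(x_2)\asymp h^{2s}$ for two points in one bump. Each such pair therefore costs $\asymp h^{4s}$, not $h^{8s}$, and summing over cubes gives $n^2h^{d+4s}$. This term fixes $h\asymp n^{-2/(d+4s)}$ and the exponent $4s/(d+4s)$ once the marginals match exactly. Your scale $h=n^{-2/(d+8s)}$ yields only $n^{-4s/(d+8s)}$, and the hoped-for recalibration is not an argument.

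\textbf{The missing idea has two parts,} which together give exact one-observation matching with a constant variance shift.
\begin{enumerate}
\item Replace disjoint bumps by overlapping windows $w_j$ at scale $h$ with $\sum_jw_j(x)^2\equiv1$, as in \eqref{eq:smooth-windows}, and set $f_\omega=\kappa\sum_jw_j\omega_j$ with $\kappa=b_0h^s$. Then $\E_\omega f_\omega(x)=0$ and $\E_\omega f_\omega(x)^2=\kappa^2$ for every $x$, so the alternative can carry the constant variance $v-\kappa^2$.
\item Use a response law whose cell probabilities are affine in $f$ and $f^2+V$, namely the ternary law \eqref{eq:ternary}. This is where the $x$-dependence of $Q_x$ actually enters. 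Then $\E_\omega q_y(f_\omega(x),v-\kappa^2)=q_y(0,v)$ identically, i.e.\ $\E_\omega L_\omega\equiv1$.
\end{enumerate}
Consequently $A(\omega,\omega')$ has mean zero in $\omega$. Its bounded differences $C\kappa^2h^d$ over $Ch^{-d}$ independent signs give $\chi^2\le\exp(Cn^2\kappa^4h^d)-1$. This is small for small $b_0$ and $h\asymp n^{-2/(d+4s)}$, while the variances are separated by $\kappa^2\asymp n^{-4s/(d+4s)}$.

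\textbf{Two smaller points.}
\begin{enumerate}
\item Gaussian errors need not lie in the model. $N(0,V)$ has fourth moment $3V^2$, and only $v_-^2<C_4$ is assumed, so this can exceed $C_4$. Use the ternary law for the parametric bound as well.
\item Your kernel $U$-statistic upper bound is a workable alternative to the paper's cubes containing exactly two observations. The paper's version buys conditionally independent block statistics, so only the count of good cubes needs concentrating. For $d\le4s$, your default $h=n^{-2/(d+4s)}$ already gives $n^{-1/2}$. By contrast, $h\asymp n^{-1/(2s)}$ violates $n^2h^d\gtrsim n$ once $d>2s$.
\end{enumerate}
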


The interpolation estimator in \zcref{sec:upper} also covers the parametric regime, yielding the following theorem. 
\begin{theorem}\label{thm:parametric}
Let $s>1$ and let $d\le4s$ be a positive integer. There exists $C>0$
such that, for every $n\ge1$,
\begin{equation}\label{eq:parametric-rate}
 C^{-1}n^{-1/2}\le\mathfrak r_n\le Cn^{-1/2}.
\end{equation}
\end{theorem}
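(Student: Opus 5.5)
The statement has two parts. The lower bound $\mathfrak r_n\ge C^{-1}n^{-1/2}$ is a two-point argument. The upper bound reuses the interpolation estimator of \zcref{sec:upper} and checks that, when $d\le4s$, both its bias and its variance are $O(n^{-1/2})$.

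\emph{Lower bound.} Fix $p\equiv1$ and $f\equiv0$, and take $Q_x$ Gaussian with variance $V_0$ or $V_1=V_0+n^{-1/2}$, independent of $x$. Choose $V_0$ in the interior of $[v_-,\min\{v_+,\sqrt{C_4}\}]$ so that the Gaussian fourth moment $3V^2$ stays below $C_4$. If that fails, replace the Gaussian by a symmetric two- or three-point law whose variance is $V$ and whose fourth moment is close to $V^2$; this is possible because $v_-^2<C_4$. The per-observation Hellinger (or $\chi^2$) distance between the two laws is $O(n^{-1/2})$, so the product measures have total variation bounded away from $1$. Le Cam's two-point lemma then gives squared risk at least $c\,(V_1-V_0)^2=c\,n^{-1}$.

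\emph{Upper bound.} The estimator splits $V$ via \eqref{eq:variance-quadratic-identity}. The term $\E(Y^2)$ is estimated by the empirical mean of $Y_i^2$. Its variance is $O(1/n)$ because $\E Y^4\le C(L_s^4+C_4)$. The quadratic functional $\int f^2p$ is estimated by the U-statistic built from the local polynomial interpolation of \zcref{sec:upper}. Here I would invoke the bias/variance bound proved there, which I expect to have the form
\[
\text{bias}\lesssim h^{s+1}\ \text{(or the relevant power)},\qquad \text{variance}\lesssim \frac1n+\frac{1}{n^2h^{d}}\cdots .
\]
This balances to $n^{-\lambda}$ with $\lambda=2(s+1)/(d+4)$ when $d>4s$.

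When $d\le4s$, I would choose the bandwidth or polynomial degree so that the bias is at most $n^{-1/2}$ and the non-parametric variance term is at most $n^{-1}$. Concretely, this means checking that the exponent constraints admit such a bandwidth; the boundary $d=4s$ requires the inequality to hold non-strictly. Everything else, including the cross terms between the two pieces, contributes $O(n^{-1})$ to the squared risk. For small $n$ the bound is trivial: truncate the estimator to $[v_-,v_+]$, which keeps the risk bounded by a constant, so one can take $C$ large.

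\emph{Main obstacle.} The hard step is verifying that the upper-bound analysis of \zcref{sec:upper} really yields $n^{-1/2}$ in the whole range $d\le4s$, and not only when $d<4s$ or up to logarithmic factors. The danger is a term like $(n^2h^d)^{-1/2}$ combined with a bias $h^{2s}$ or $h^{s+1}$, which can leave a $\log n$ at equality. I would first try to pick the interpolation order $\ell$ and the bandwidth $h=n^{-1/d}$-scale so that the local fits use $O(1)$ points per cell. I would then confirm directly that the bias of the quadratic term is $O(h^{2s})=O(n^{-2s/d})\le n^{-1/2}$, and that the degenerate U-statistic part has variance $O(n^{-2}h^{-d})=O(n^{-1})$.
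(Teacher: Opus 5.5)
Your lower bound is the paper's two-point argument and is essentially correct, with two repairs. First, take the separation to be $c_0n^{-1/2}$ with $c_0$ small, so that $V_1$ stays admissible for every $n$. Second, drop the two-point fallback: the symmetric laws on $\pm\sqrt{V_0}$ and $\pm\sqrt{V_1}$ are mutually singular, so their Hellinger distance is maximal. You need a support independent of $V$ with $V$-dependent weights. An example is the ternary law \eqref{eq:ternary} on $\{-a_Y,0,a_Y\}$ with $v<a_Y^2<C_4/v$, which is what the paper uses; it works whenever $v_-^2<C_4$, so no Gaussian case split is needed.

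The genuine gap is the upper bound. You never fix an estimator. The one you guess at, the sample mean of $Y_i^2$ minus a U-statistic for $\int f^2p$ built from local interpolation, is not the paper's estimator. With $O(1)$ observations per cell and an unknown rough $p$, its natural realizations do not have the bias $O(h^{2s})$ you assume:
\begin{itemize}
\item A cross-fitted product $Y(x)\widehat f(x)$ with $\widehat f(x)=\sum_jl_j(x)Y(z_j)$ has bias $f(x)\cdot O(h^s)$, which is only first order.
\item The one-step version $2\widehat f(X)Y-\widehat f(X)^2$ has bias $-\int(\widehat f-f)^2p$. This contains the noise term $V\sum_jl_j^2$, which is of order one when each cell holds $O(1)$ points.
\item A kernel U-statistic $\sum_{i\ne j}Y_iY_jK(X_i,X_j)$ targets $\int f^2p$ only if $K$ reproduces smooth functions with respect to $p(y)\,dy$. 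That requires the local density at scale $n^{-1/d}$, which cannot be estimated.
\end{itemize}
There is also a weighting mismatch. The mean $n^{-1}\sum_iY_i^2$ averages $f^2$ against $p$, whereas any average over cells with a prescribed local configuration weights locations by the rough probability of that configuration. So the two pieces do not combine to $V$. The bias and variance forms you say you ``expect'' are therefore not available. As written, $h^{s+1}$ against $1/(n^2h^d)$ does not even balance to $\lambda$.

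The missing idea is to use the constancy of $V$ directly through squared local residuals, rather than through \eqref{eq:variance-quadratic-identity}. In a usable block, $D_b=Y(x)-Y(x')-\sum_j[l_j(x)-l_j(x')]Y(z_j)$ has conditional mean equal to a difference of two interpolation errors. This is $O(H^s)$ because the interpolation reproduces the degree-$\ell$ Taylor polynomial. Its noise part has conditional variance exactly $VA_b$, where $A_b$ is a known function of the design. Hence $T_b=D_b^2/A_b$ satisfies $\E(T_b\mid X_{1:n})=V+O(H^{2s})$ whatever the local density is, so averaging over the random set of usable blocks needs no reweighting by $p$. For $d\le4s$ the prescribed $N=\lceil n^{(d-4s)/(d(d+4))}\rceil$ equals $1$. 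The number of usable blocks then satisfies $\E G\asymp n$ and $\Var(G)\le C\,\E G$, and \eqref{eq:upper-risk} gives squared risk at most $C(n^{-4s/d}+n^{-1})\le Cn^{-1}$.

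Your closing numerology ($h\asymp n^{-1/d}$, squared bias $n^{-4s/d}\le n^{-1}$, variance $n^{-1}$) is the right one. There is no logarithmic loss at $d=4s$, since $H^{4s}\asymp n^{-1}$ exactly. But these bounds hold for the residual statistic $T_b$, not for the decomposition you propose.
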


\subsection{Ideas of the proof}
We now sketch the main ideas of the lower bound in \zcref{thm:bracket}, since this is the lengthiest argument in the paper. 

\medskip
\emph{Step 1: Moment matching for the lower bound.}
For the lower bound, we compare two priors on admissible parameters under which the values of $V$ are separated, but the induced
distributions of the data are nearly indistinguishable. We take the
conditional law of $Y$ given $X=x$ to be supported on three points,
with each of its three probabilities a quadratic polynomial in
$f(x)$. Consequently, for fixed design points
$x_1,\ldots,x_k$, averaging over a prior produces density-weighted
moments of the form
\[
 \E_{\Pi}\!\left[\prod_{i=1}^k p(x_i)f(x_i)^{a_i}\right],
 \qquad a_i\in\{0,1,2\},
\]
where $\E_{\Pi}$ denotes expectation over the prior.
Our strategy is to match sufficiently many moments under the two priors so
that the resulting data distributions are close.

For $k$ observations, the
joint law involves products of response probabilities that are quadratic in
the corresponding regression values, and hence depends on mixed moments of
degree up to $2k$. The construction in \zcref{sec:local-proof} achieves the
required moment matching using polynomial interpolation together with bounded
perturbations based on Hermite polynomials. 
The interpolation step specifies how the density-weighted moments under the
two priors must differ so that, after accounting for the separation in the
variance parameter, the corresponding terms in the joint data distributions
match. Two consistency issues then remain. 
First, the targets for $k$ design points must reduce to those for
$k-1$ points when one design point is integrated out, since all of
them must arise from the same prior. Second, the interpolation
formulas initially give signed corrections, but the lower bound
requires genuine probability laws, which must be nonnegative everywhere. We first enforce the required marginal consistency and then absorb the
signed corrections into a common prior marginal for the design density. This yields a single pair
of priors, fixed before sampling, that realizes all moment targets up
to the chosen matching order, with the required accuracy.

\medskip
\emph{Step 2: Interpolation and the cost of matching.}
The difference in the variance parameter between the two priors creates
degree-two terms associated with each design point. To compensate for these
terms through the regression functions, we need a second-moment adjustment
that contributes separately at each design point without introducing mixed
terms involving two distinct design points.  After rescaling the region of the design space under consideration to the
unit cube \(Q=[0,1]^d\), fix a finite family
\(\phi=(\phi_a)_{a\in\mathcal I}\) of smooth interpolation functions. For each
configuration $\mathbf U=(U_1,\ldots,U_k)\in Q^k$ and an interpolation
cutoff $N$, we construct a positive semidefinite coefficient matrix
$E_{k,N}(\mathbf U)$ such that
\[
 \phi(U_i)^T E_{k,N}(\mathbf U)\phi(U_j)
 =w_i(\mathbf U)\mathbf 1_{\{i=j\}},\qquad 0\le w_i\le1.
\]
Thus the interpolation produces the desired diagonal covariance pattern,
except that the contribution at $U_i$ is reduced from $1$ to $w_i$.
We measure this loss by the integrated defect
\[
 \int_{Q^k}\sum_{i=1}^k(1-w_i(\mathbf U))\,d\mathbf U.
\]
The cutoff $N$ balances this defect against the size of the interpolation
coefficients: increasing $N$ makes the defect smaller but makes the
coefficients larger.

To use these matrices in the prior construction, we also need a decomposition
whose coefficients do not become too large. We write \(E_{k,N}\) as a sum or integral of products of bounded factors,
with a separate factor depending on each design point \(U_i\). This form
allows the matrix correction to be realized through the prior construction.
The total variation of the coefficients in such a representation is its
representation cost. This quantity controls the
size of the signed corrections needed to achieve the moment matching from
Step 1. We call a polynomial a \emph{cardinal polynomial at \(U_i\)} if it equals
one at \(U_i\) and zero at the other design points. At fixed \(k\), an
elementary construction using products of linear factors chosen to vanish
at the other design points gives representation cost at most
\[
 C_kN^2(1+\log N)^{k-2}
\]
and integrated defect at most
\[
 C_kN^{-d}(1+\log N)^{k-2}.
\]
For the exponent result, it suffices to work at a fixed matching order, so
these estimates are enough to prove \zcref{thm:exponent}.

To obtain the quantitative lower bound in \zcref{thm:bracket}, however, we
must match moments uniformly for every number $k\le M$ of observations in
the same rescaled region, where the matching order $M$ grows with $n$. We therefore
need bounds whose dependence on $k$ is controlled uniformly. 
The construction in
\zcref{sec:geometry} builds three-point cardinal polynomials and combines
them recursively in a balanced way, so that the constants grow only
exponentially in $k$. This removes the logarithmic factors and gives,
simultaneously for every $k$,
\[
 \text{representation cost}\le C^kN^2,
 \qquad
 \text{integrated defect}\le C^kN^{-d}.
\]
Finally, let \(\delta\) denote the amplitude of the diagonal covariance
correction imposed by the interpolation construction. Because the signed corrections needed
for moment matching grow with the representation cost, keeping both priors
nonnegative for every $k\le M$ requires
\[
 \delta=cN^{-2},\qquad c\le aD^{-M},
\]
for fixed constants $a>0$ and $D>1$.

\medskip
\emph{Step 3: Comparing the full sample.}
We next combine the regression-function perturbations constructed locally in
Steps 1 and 2 across the design space. Independent copies of these
perturbations are multiplied by smooth weights  whose squares sum to one at every point $x$. Since the variance
adjustment contributed by each local perturbation is proportional to the
square of its weight, their sum is constant in $x$. Thus the combined
construction still has constant conditional variance.

There is one complication in turning these local constructions into a
probability model for the full sample. The product of the perturbed local
design densities need not integrate exactly to one; its total mass is
random. We first replace the fixed sample size by a Poisson random sample
size with mean $\nu$. For such a Poisson sample, the random total mass of
the design density contributes an additional exponential factor to the
likelihood. We therefore reweight both priors by the same factor, chosen so that in the
Poisson mixture likelihood it cancels this random-mass contribution. After
this cancellation, the relevant mixture expectation is again taken under
the original product of independent local laws, so the local
moment-matching identities from Steps 1 and 2 apply unchanged. We then
normalize the resulting design density to integrate to one. If $h$
is the spatial scale of a local region and $\eta$ bounds the size of the
density perturbation, this final normalization changes the squared Hellinger
distance by at most order
\[
 \nu h^d\eta^4.
\]
To pass from the local comparisons to the full sample, we form a graph on
the observed design points, joining two design points whenever some local
block $Q_j$ contains both of them. Different connected components then
involve disjoint collections of local latent variables, so the conditional
response law factors over the components. We apply the local comparison
within components of each size, count such connected components, and sum
the resulting errors to obtain the full-sample comparison in
\zcref{sec:gluing}.

The scaling of this comparison explains the rate in the theorem. Recall that
$N$ is the interpolation cutoff from Step 2. Let $\delta$ denote the amplitude of the diagonal covariance correction
from Step 2, and
let $\kappa$ be the amplitude of the perturbation of the regression function.
The resulting separation between the two conditional variance parameters is
\[
 \Delta=\kappa^2\delta.
\]
Ignoring for the moment the smaller losses needed in the full proof, Step 2
gives $\delta\asymp N^{-2}$, while the $s$-H\"older constraint allows
$\kappa\asymp h^s$. Among the order $n^2$ pairs of observations, a proportion of order
$h^d$ have design points in the support of a common local weight. Thus
there are order $n^2h^d$ such pairs. The integrated interpolation defect contributes a further
factor $N^{-d}$, so the leading squared Hellinger error has order
\[
 n^2h^d\Delta^2N^{-d}
 \asymp n^2h^{d+4s}N^{-d-4}.
\]
At the heuristic scale $h^d\asymp n^{-1}$, corresponding to order one
observation per local region, keeping this error bounded requires
\[
 N\asymp n^{(d-4s)/(d(d+4))}.
\]
Substituting this choice into $\Delta\asymp h^{2s}N^{-2}$ gives
\[
 \Delta\asymp n^{-2(s+1)/(d+4)},
\]
which is the claimed minimax exponent.

The full proof must also control three remaining errors: connected components
containing more than $M$ observations, higher-degree terms beyond the
moment-matching truncation, and the error introduced when the perturbed
design density is normalized.
For the exponent statement, we keep the largest matched component size $M$
fixed and make the local regions sufficiently sparse; the resulting losses
can be absorbed into an arbitrarily small power of $n$. To obtain the
quantitative lower bound in \zcref{thm:bracket}, we instead let $M$ grow
with $n$. We retain terms through total response degree $2R$ and choose $\kappa$
small enough that the omitted higher-order terms are negligible.  At the same time, maintaining nonnegativity of the
two priors for every component size $k\le M$ forces the covariance adjustment
$\delta$ to lose a factor $e^{-CM}$. We also choose the local regions so
that their expected sample count is of order
\[
 e^{-C(\log n)/M},
\]
which makes components larger than $M$ sufficiently rare. Balancing these
two losses by taking
\[
 M\asymp\sqrt{\log n}
\]
produces the factor $e^{-C\sqrt{\log n}}$ in
\eqref{eq:main-bracket}.

\subsection{Organization and conventions}
\zcref{sec:upper} proves the upper bounds.
\zcref{sec:lower-exponent} proves the fixed-order lower bound in
\zcref{thm:exponent}, and \zcref{sec:refinement} proves the lower bound
in \zcref{thm:main} by allowing the matching order to grow. Both
arguments use the comparison and testing inputs collected in
\zcref{sec:lower-inputs}. The local priors are constructed in
\zcref{sec:local-proof}, and the full-sample comparison is proved in
\zcref{sec:gluing}. The fixed-order construction uses the elementary interpolation estimates of
\zcref{sec:simple-geometry}; the quantitative refinement uses the
uniform estimates of \zcref{sec:geometry}. The parametric lower bound
is proved at the beginning of \zcref{sec:lower-exponent}, and
\zcref{app:small-s} proves \zcref{thm:main2}.

All logarithms are natural. Constants may increase between occurrences. The total variation distance between probability measures is
$\TV(P,Q)=\sup_A|P(A)-Q(A)|$, and squared Hellinger distance is
$H^2(P,Q)=\int(\sqrt{\mathrm dP}-\sqrt{\mathrm dQ})^2$.
For a signed measure $\mu$, $\norm{\mu}_{\TV}$ denotes its full
variation norm; in particular,
$\norm{P-Q}_{\TV}=2\TV(P,Q)$ for probability measures $P,Q$.

\subsection{Acknowledgments}
P.L. was partially supported by NSF grant DMS-2450004. This paper was written
by the authors with the assistance of large language models, which included
suggesting arguments, contributing to drafting and revision, and writing code
for computational checks.

\section{An estimator from corrected response differences}\label{sec:upper}
Throughout this section, let \(s>1\) and \(d\ge1\). We construct the
estimator by dividing the design space into small blocks and using only
those blocks that contain two types of observations: a fixed collection of
points at which a local polynomial interpolant can be formed, and a pair of
points that lie much closer to each other than the diameter of the block.
The interpolation correction is exact for polynomials of coordinatewise
degree at most \(\ell\). Consequently, the deterministic part of the
corrected pair difference is the difference of two interpolation errors,
which can be controlled using the \(s\)-H\"older smoothness of \(f\).

Let
\[
 q=(\ell+1)^d.
\]
Choose \(q\) disjoint closed boxes
\(B_1,\ldots,B_q\) of positive volume in \([1/8,3/8]^d\), centered at the
points of a hypercubic lattice with \(\ell+1\) distinct nodes in each coordinate.
The boxes may be chosen sufficiently small that, for every choice of points
\(\zeta_j\in B_j\), there exist cardinal polynomials
\(L_1,\ldots,L_q\), each of coordinatewise degree at most \(\ell\), such
that
\begin{equation}\label{eq:stencil}
 L_j(\zeta_i)=\1_{\{i=j\}},\qquad
 \sup_{u\in[0,1]^d}\sum_{j=1}^q
 \bigl(|L_j(u)|+\norm{\nabla L_j(u)}_2\bigr)\le C_{d,s}.
\end{equation}
Thus the cardinal polynomials and their gradients remain uniformly bounded
when the interpolation points vary within their prescribed boxes. A specific construction will be given below.

Set
\[
 m=\lfloor n^{1/d}\rfloor,\qquad
 H=m^{-1},\qquad
 N=\left\lceil n^{(d-4s)/(d(d+4))}\right\rceil,\qquad
 k=q+2.
\]
Partition \([0,1]^d\) into \(m^d\) cubes of side length \(H\). If
\[
 b=x_b+H[0,1]^d
\]
is one such block, define its interpolation boxes to be
\[
 x_b+HB_1,\ldots,x_b+HB_q.
\]

In the reference cube \([0,1]^d\), let
\[
 R=[5/8,7/8]^d.
\]
This region is disjoint from the interpolation boxes
\(B_1,\ldots,B_q\subset[1/8,3/8]^d\), so the observations used to form
the interpolant are distinct from the two observations whose responses are
differenced. Partition \(R\) into \(N^d\) equal subcubes, each of side
length \(1/(4N)\). If \(b=x_b+H[0,1]^d\), the corresponding response-pair
region is \(x_b+HR\), and its \(N^d\) subcubes have side length
\(H/(4N)\).
Requiring the two response locations \(x,x'\) to lie in the
same such subcube therefore ensures
\[
 \|x-x'\|_2\le \frac{\sqrt d\,H}{4N}.
\]
Thus each block contains two geometrically separate regions: one supplying
the interpolation points and one supplying a pair of nearby response
locations. Boundary events may be ignored, since they have probability
zero.

For a spatial block \(b=x_b+H[0,1]^d\), we call \(b\) \emph{usable} if it contains exactly \(k=q+2\)
observations, with exactly one design point in each interpolation box
\(x_b+HB_j\), \(j=1,\ldots,q\), and exactly two further design points
lying in the same subcube of the response-pair region \(x_b+HR\).
Denote the interpolation locations by $
 z_1,\ldots,z_q$, 
with \(z_j\in x_b+HB_j\), and denote the two response-pair locations by
\(x,x'\).

Let \(\zeta_j=(z_j-x_b)/H\in B_j\), and let
\(L_1,\ldots,L_q\) be the cardinal polynomials associated with
\(\zeta_1,\ldots,\zeta_q\) in the reference cube. Define their rescalings
to \(b\) by
\[
 l_j(u)=L_j\!\left(\frac{u-x_b}{H}\right),
 \qquad u\in b.
\]
Then \(l_j(z_i)=\1_{\{i=j\}}\). Define
\begin{equation}\label{eq:statistic}
 D_b
 =
 Y(x)-Y(x')
 -
 \sum_{j=1}^q
 \bigl[l_j(x)-l_j(x')\bigr]Y(z_j),
 \qquad
 A_b
 =
 2+\sum_{j=1}^q
 \bigl[l_j(x)-l_j(x')\bigr]^2,
\end{equation}
and set
\[
 T_b=\frac{D_b^2}{A_b}.
\]

To see the role of this statistic, write \(Y(u)=f(u)+\varepsilon(u)\) at
the observed design locations. Conditional on the design, the error part of
\(D_b\) is
\[
 \varepsilon(x)-\varepsilon(x')
 -
 \sum_{j=1}^q
 \bigl[l_j(x)-l_j(x')\bigr]\varepsilon(z_j).
\]
The errors at the distinct observations are conditionally independent, each
with variance \(V\). Hence the conditional variance of this error term is
exactly \(VA_b\). The remaining deterministic term is
\[
 \left[f(x)-\sum_{j=1}^q l_j(x)f(z_j)\right]
 -
 \left[f(x')-\sum_{j=1}^q l_j(x')f(z_j)\right],
\]
the difference of the interpolation errors at \(x\) and \(x'\).
Consequently,
\[
 \E(T_b\mid X_1,\ldots,X_n)
 =
 V+\frac{1}{A_b}
 \left(
 \left[f(x)-\sum_{j=1}^q l_j(x)f(z_j)\right]
 -
 \left[f(x')-\sum_{j=1}^q l_j(x')f(z_j)\right]
 \right)^2.
\]
Thus the conditional bias of \(T_b\) is determined entirely by the
difference of these two interpolation errors.

Let \(G\) denote the number of usable blocks. We define
\[
 \widehat V_n
 =
 \frac1G\sum_{b\ {\rm usable}}T_b
\]
when \(G>0\), and set \(\widehat V_n=0\) when \(G=0\).
This is a measurable function of the observations; for instance, the two
points \(x,x'\) may be ordered by their observation indices.\footnote{Edgar Dobriban shared a similar estimator construction for the
upper bound in private communication. The results presented here were
derived independently.}

\begin{proof}[Proof of the upper bounds in \zcref{thm:main,thm:parametric}]
\leavevmode\par\medskip\noindent\emph{Step 1: Stability and polynomial reproduction.}
Let
\[
 \mathcal P_\ell
 =
 \operatorname{span}\{u^\gamma:
        0\le \gamma_r\le \ell,\ r=1,\ldots,d\},
\]
the space of polynomials of coordinatewise degree at most \(\ell\).
Its dimension is \(q=(\ell+1)^d\).

Let \(\xi_1,\ldots,\xi_q\) denote the Cartesian-product grid points at
the centers of \(B_1,\ldots,B_q\). Evaluation on these points defines a
linear map
\[
 \mathcal E_\xi:\mathcal P_\ell\longrightarrow\mathbb R^q,
 \qquad
 P\longmapsto \bigl(P(\xi_1),\ldots,P(\xi_q)\bigr).
\]
In the monomial basis of \(\mathcal P_\ell\), the matrix of
\(\mathcal E_\xi\) is a tensor product of one-dimensional Vandermonde
matrices. Since the nodes in each coordinate are distinct, this matrix is
invertible.

Now let \(\zeta_j\in B_j\), \(j=1,\ldots,q\). The corresponding
evaluation matrix depends continuously on
\((\zeta_1,\ldots,\zeta_q)\). By choosing the boxes
\(B_1,\ldots,B_q\) sufficiently small, we may therefore ensure that this
matrix is invertible for every such choice of interpolation points, with
the norm of its inverse bounded uniformly over
\(B_1\times\cdots\times B_q\).

For each \(j\), let \(L_j\in\mathcal P_\ell\) be the unique cardinal
polynomial satisfying
\[
 L_j(\zeta_i)=\1_{\{i=j\}},
 \qquad i=1,\ldots,q.
\]
Equivalently, the interpolation operator associated with
\(\zeta_1,\ldots,\zeta_q\) is
\[
 \mathcal I_\zeta g(u)
 =
 \sum_{j=1}^q g(\zeta_j)L_j(u).
\]
The uniform bound on the inverse evaluation matrix gives uniform bounds on
the coefficients of the polynomials \(L_j\). Since
\(\mathcal P_\ell\) is finite dimensional and \(u\in[0,1]^d\), these
coefficient bounds imply
\[
 \sup_{u\in[0,1]^d}
 \sum_{j=1}^q
 \bigl(|L_j(u)|+\|\nabla L_j(u)\|_2\bigr)
 \le C_{d,s},
\]
which is \eqref{eq:stencil}.

Finally, if \(P\in\mathcal P_\ell\), then both \(P\) and
\(\mathcal I_\zeta P\) belong to \(\mathcal P_\ell\) and agree at the
\(q\) interpolation points. Uniqueness of interpolation therefore gives
\[
 \mathcal I_\zeta P=P.
\]
Thus the interpolation operator reproduces every polynomial of
coordinatewise degree at most \(\ell\).

\medskip\noindent\emph{Step 2: Bias and conditional variance.}
Fix a usable block \(b=x_b+H[0,1]^d\), and let \(c_b\) denote its
center. Let \(P_b\) be the Taylor polynomial of \(f\) about \(c_b\) of
total degree \(\ell\), and write
\[
 r_b=f-P_b.
\]
Since \(f\) has \(C^s\) norm bounded by \(L_s\), the Taylor remainder
satisfies
\begin{equation}\label{eq:taylor}
 \norm{r_b}_{L^\infty(b)}\le C H^s,
 \qquad
 \norm{\nabla r_b}_{L^\infty(b)}\le C H^{s-1}.
\end{equation}
Indeed, the first estimate is the usual Taylor remainder bound for a
\(C^{\ell,\alpha}\) function. For the second, apply the same argument to
each first derivative of \(f\). When \(\alpha=1\), these estimates use
only the Lipschitz continuity of the derivatives of order \(\ell\), in
accordance with our convention \(C^s=C^{s-1,1}\) for integer \(s\);
no derivative of order \(s\) is required.

We next record the effect of placing \(x\) and \(x'\) in the same
response-pair subcube. By construction,
\[
 \norm{x-x'}_2\le \frac{\sqrt d\,H}{4N}.
\]
Moreover,
\[
 l_j(u)=L_j\!\left(\frac{u-x_b}{H}\right),
\]
so the gradient bound in \eqref{eq:stencil} gives
\[
 \sum_{j=1}^q \norm{\nabla l_j(u)}_2
 \le \frac{C}{H},
 \qquad u\in b.
\]
The mean value theorem therefore yields
\begin{equation}\label{eq:interp-difference}
 \sum_{j=1}^q |l_j(x)-l_j(x')|
 \le \frac{C}{H}\norm{x-x'}_2
 \le \frac{C}{N}.
\end{equation}
In particular,
\begin{equation}\label{eq:Ab-bounds}
 2\le A_b
 =2+\sum_{j=1}^q[l_j(x)-l_j(x')]^2
 \le C.
\end{equation}

Let
\[
 a_b=\E(D_b\mid X_{1:n})
\]
be the conditional mean of \(D_b\). Since \(P_b\) has total degree at
most \(\ell\), it belongs to the space of polynomials of coordinatewise
degree at most \(\ell\) reproduced by the interpolation operator from
Step~1. Hence
\[
 P_b(u)=\sum_{j=1}^q l_j(u)P_b(z_j),
 \qquad u\in b.
\]
Substituting \(f=P_b+r_b\) into the definition of \(D_b\) therefore
cancels the polynomial terms and gives
\[
 a_b
 =
 r_b(x)-r_b(x')
 -
 \sum_{j=1}^q
 [l_j(x)-l_j(x')]r_b(z_j).
\]
The two terms on the right have different sources. By
\eqref{eq:taylor} and the bound on \(\norm{x-x'}_2\),
\[
 |r_b(x)-r_b(x')|
 \le C H^{s-1}\norm{x-x'}_2
 \le C\frac{H^s}{N},
\]
while \eqref{eq:taylor} and \eqref{eq:interp-difference} give
\[
 \left|
 \sum_{j=1}^q[l_j(x)-l_j(x')]r_b(z_j)
 \right|
 \le C\frac{H^s}{N}.
\]
Consequently,
\begin{equation}\label{eq:ab-bound}
 |a_b|\le C\frac{H^s}{N}.
\end{equation}

Conditional on the design points, the regression errors attached to the
\(q+2\) observations in \(b\) are independent, have mean zero, and have
common variance \(V\). The random part of \(D_b\) is
\[
 \varepsilon(x)-\varepsilon(x')
 -
 \sum_{j=1}^q
 [l_j(x)-l_j(x')]\varepsilon(z_j),
\]
and hence has conditional variance \(VA_b\). It follows that
\begin{equation}\label{eq:conditional-mean}
 \E(T_b\mid X_{1:n})
 =
 \frac{VA_b+a_b^2}{A_b}
 =
 V+\frac{a_b^2}{A_b}.
\end{equation}
Thus, by \eqref{eq:Ab-bounds} and \eqref{eq:ab-bound},
\[
 \left|\E(T_b\mid X_{1:n})-V\right|
 \le C\frac{H^{2s}}{N^2}.
\]

We also need a uniform conditional variance bound. By the conditional
fourth-moment assumption and the \(L^4\) triangle inequality,
\[
 \norm{D_b}_{L^4(\,\cdot\,\mid X_{1:n})}
 \le
 |a_b|
 +C_4^{1/4}
 \left(
 2+\sum_{j=1}^q|l_j(x)-l_j(x')|
 \right)
 \le C,
\]
where we used \eqref{eq:interp-difference}. Since \(A_b\ge2\),
\[
 \E(T_b^2\mid X_{1:n})
 =
 \frac{\E(D_b^4\mid X_{1:n})}{A_b^2}
 \le C,
\]
and therefore
\begin{equation}\label{eq:conditional}
 \Var(T_b\mid X_{1:n})\le C.
\end{equation}

Finally, usable blocks involve disjoint sets of observations. Conditional
on the design, the corresponding statistics \(T_b\) are therefore
independent. On the event \(G>0\), averaging
\eqref{eq:conditional-mean} over the usable blocks and using
\eqref{eq:conditional} gives
\begin{equation}\label{eq:risk-given-design}
 \E\!\left[
   (\widehat V_n-V)^2
   \,\middle|\,X_{1:n}
 \right]
 \le
 C\left(\frac{H^{4s}}{N^4}+\frac1G\right).
\end{equation}

\medskip\noindent\emph{Step 3: The number of usable blocks.}
Recall that a usable block contains exactly one observation in each of its
$q$ interpolation boxes, exactly two observations in the same one of its
$N^d$ pair subcubes, and no other observations in the block. Thus a good
block contains
\[
 k=q+2
\]
observations in total. For a block $b$, let $u_b$ denote its probability
mass under the design density $p$, let $a_{bj}$ denote the probability mass
of its $j$th interpolation box, and let $z_{br}$ denote the probability
mass of its $r$th pair subcube. Write
\[
 (n)_k=n(n-1)\cdots(n-k+1).
\]

We first compute the probability that a fixed block is good. There are
$(n)_k/2$ ways to assign distinct observation labels to the $q$ ordered
interpolation boxes and to an unordered pair of response locations. The
factor
\[
 \prod_{j=1}^q a_{bj}
\]
is the probability that the interpolation observations fall in their
prescribed boxes, while
\[
 \sum_{r=1}^{N^d}z_{br}^2
\]
is the probability that the two response observations fall in the same
pair subcube. Finally, all remaining $n-k$ observations must lie outside
$b$, which contributes $(1-u_b)^{n-k}$. Hence
\begin{equation}\label{eq:probability}
 \pi_b:=\Pp(b\text{ good})
 =\frac{(n)_k}{2}(1-u_b)^{n-k}
        \prod_{j=1}^q a_{bj}\sum_{r=1}^{N^d}z_{br}^2.
\end{equation}

We next estimate this probability uniformly over the admissible design
densities. By the uniform upper and lower bounds on $p$, and because the
interpolation boxes occupy fixed positive fractions of a block,
\[
 u_b\asymp H^d,\qquad
 a_{bj}\asymp H^d,\qquad
 z_{br}\asymp H^dN^{-d}.
\]
Moreover, since $H=\lfloor n^{1/d}\rfloor^{-1}$,
\[
 1\le nH^d\le 2^d
\]
for all sufficiently large $n$. Since $k$ is fixed,
$(n)_k\asymp n^k$, and since $u_b\asymp n^{-1}$,
$(1-u_b)^{n-k}\asymp1$. Therefore
\[
 \sum_{r=1}^{N^d}z_{br}^2
 \asymp
 N^d(H^dN^{-d})^2
 =
 H^{2d}N^{-d},
\]
and, using $k=q+2$,
\[
 \pi_b
 \asymp
 n^k(H^d)^qH^{2d}N^{-d}
 =
 (nH^d)^kN^{-d}
 \asymp N^{-d}.
\]
There are $m^d\asymp n$ blocks, so if $G$ denotes the number of good
blocks,
\[
 \mu:=\E G=\sum_b\pi_b\asymp nN^{-d}.
\]

It remains to show that $G$ is sufficiently concentrated around its mean.
Consider two distinct blocks $b$ and $c$, and write
\[
 u=u_b,\qquad v=u_c.
\]
Because the blocks are disjoint, the factors specifying where the $k$
observations inside each block must fall are the same in the joint
probability as in $\pi_b\pi_c$ and therefore cancel in the ratio. The only
differences are that the two good-block events together use $2k$ distinct
observations, and the remaining $n-2k$ observations must lie outside
$b\cup c$. Consequently,
\begin{equation}\label{eq:ratio}
 R_{bc}
 :=
 \frac{\Pp(b,c\text{ good})}{\pi_b\pi_c}
 =
 \frac{(n)_{2k}}{(n)_k^2}
 \frac{(1-u-v)^{n-2k}}
 {(1-u)^{n-k}(1-v)^{n-k}}.
\end{equation}
We now show that $R_{bc}$ differs from one by only order $n^{-1}$.
First,
\[
 \frac{(n)_{2k}}{(n)_k^2}
 =
 \prod_{i=0}^{k-1}\left(1-\frac{k}{n-i}\right).
\]
Also,
\[
 1-u-v
 =
 (1-u)(1-v)
 \left(1-\frac{uv}{(1-u)(1-v)}\right).
\]
Taking logarithms in \eqref{eq:ratio} therefore gives
\begin{align*}
 \log R_{bc}
 &=
 \sum_{i=0}^{k-1}
 \log\left(1-\frac{k}{n-i}\right)
 +(n-2k)
 \log\left(1-\frac{uv}{(1-u)(1-v)}\right)\\
 &\hspace{2em}
 -k\log(1-u)-k\log(1-v).
\end{align*}
Since $u,v\le C/n$ and $k$ is fixed, all arguments of the logarithms
lie in $[1/2,1]$ for sufficiently large $n$. Using
$|\log(1-t)|\le2t$ for $0\le t\le1/2$, we obtain
\[
 |\log R_{bc}|
 \le
 C\left(\frac1n+nuv+u+v\right)
 \le \frac Cn.
\]
It follows that
\[
 |R_{bc}-1|\le\frac Cn.
\]
Hence
\[
 \bigl|\Cov(\1_{\{b\text{ good}\}},\1_{\{c\text{ good}\}})\bigr|
 =
 \pi_b\pi_c|R_{bc}-1|
 \le \frac{C}{n}\pi_b\pi_c.
\]
Summing the individual variances and these pairwise covariances gives
\[
 \Var(G)
 \le
 \mu+\frac Cn\mu^2.
\]
Since $\mu=\E G\le m^d\le n$, we conclude that
\[
 \Var(G)\le C\mu.
\]

Chebyshev's inequality now gives
\[
 \Pp(G<\mu/2)
 \le
 \frac{4\Var(G)}{\mu^2}
 \le
 \frac C\mu.
\]
Finally, on $\{G\ge\mu/2\}$ we have $1/G\le2/\mu$, while on
$\{0<G<\mu/2\}$ we use the trivial bound $1/G\le1$. Therefore
\begin{align}
 \Pp(G=0)
 +\E\left[\frac{\1_{\{G>0\}}}{G}\right]
 &\le
 2\Pp(G<\mu/2)+\frac2\mu
 \notag\\
 &\le\frac C\mu.
 \label{eq:inverse-count}
\end{align}

\medskip\noindent\emph{Step 4: The risk bound.}
We now combine the conditional risk bound with the control on the number of
usable blocks. Recall that $\widehat V_n=0$ when $G=0$. By the uniform
conditional fourth-moment bound,
\[
 V^2
 =
 \bigl(\E(\varepsilon^2\mid X)\bigr)^2
 \le
 \E(\varepsilon^4\mid X)
 \le C_4,
\]
so on $\{G=0\}$ the squared error is uniformly bounded by $C_4$.

On $\{G>0\}$, \eqref{eq:risk-given-design} gives
\[
 \E\!\left[
   (\widehat V_n-V)^2
   \,\middle|\,X_{1:n}
 \right]
 \le
 C\left(\frac{H^{4s}}{N^4}+\frac1G\right).
\]
Taking expectations and separating the events $\{G=0\}$ and $\{G>0\}$
therefore yields
\[
 \E_\theta(\widehat V_n-V)^2
 \le
 C\frac{H^{4s}}{N^4}
 +C\E_\theta\!\left[\frac{\1_{\{G>0\}}}{G}\right]
 +C_4\Pp_\theta(G=0).
\]
The bounds in Step~3 hold uniformly over the admissible design densities, so
\eqref{eq:inverse-count} and $\mu\asymp nN^{-d}$ imply
\[
 \Pp_\theta(G=0)
 +
 \E_\theta\!\left[\frac{\1_{\{G>0\}}}{G}\right]
 \le
 \frac C\mu
 \le
 C\frac{N^d}{n}.
\]
Hence
\begin{equation}\label{eq:upper-risk}
 \sup_{\theta\in\Theta}\E_\theta(\widehat V_n-V)^2
 \le
 C\left(\frac{H^{4s}}{N^4}+\frac{N^d}{n}\right).
\end{equation}

Suppose first that $d>4s$. Since
$H=\lfloor n^{1/d}\rfloor^{-1}\asymp n^{-1/d}$ and
\[
 N
 =
 \left\lceil n^{(d-4s)/(d(d+4))}\right\rceil
 \asymp
 n^{(d-4s)/(d(d+4))},
\]
we have
\[
 \frac{H^{4s}}{N^4}
 \asymp
 n^{-4s/d}
 n^{-4(d-4s)/(d(d+4))}
 =
 n^{-4(s+1)/(d+4)},
\]
while
\[
 \frac{N^d}{n}
 \asymp
 n^{(d-4s)/(d+4)-1}
 =
 n^{-4(s+1)/(d+4)}.
\]
Thus \eqref{eq:upper-risk} gives
\[
 \sup_{\theta\in\Theta}\E_\theta(\widehat V_n-V)^2
 \le
 Cn^{-4(s+1)/(d+4)}.
\]

If $d\le4s$, then the definition above gives $N=1$. Since
$H\asymp n^{-1/d}$,
\[
 H^{4s}\le Cn^{-4s/d}\le Cn^{-1},
\]
because $4s/d\ge1$. Therefore
\[
 \sup_{\theta\in\Theta}\E_\theta(\widehat V_n-V)^2
 \le
 C\left(n^{-4s/d}+n^{-1}\right)
 \le
 Cn^{-1}.
\]
The preceding estimates were stated for all sufficiently large $n$. For the
finitely many remaining sample sizes, the estimator identically equal to
zero has risk at most $C_4$, so enlarging the constant $C$ completes the
proof.
\end{proof}

\section{A lower bound with fixed-order moment matching}\label{sec:lower-exponent}

The proof of the lower bound in \zcref{thm:exponent} uses the same
moment-matching construction as the sharper lower bound proved later. The
difference is that, once $\epsilon>0$ is fixed, it is enough here to match
moments only up to a fixed order. We first prove the parametric
$n^{-1/2}$ lower bound, which is valid for every $s>0$. We then state the
local and full-sample comparison bounds used in both nonparametric lower
bounds and choose their parameters with the matching order fixed. The
construction of the priors and the proof that the local comparisons can be
combined over the full sample are given in
\zcref{sec:local-proof,sec:gluing}.

We begin by fixing a response distribution that will be used throughout the
lower-bound constructions. Choose constants
\[
 v_-<v<\min(v_+,\sqrt{C_4}),\qquad
 v<a_Y^2<C_4/v.
\]
Such a choice is possible because $v_-^2<C_4$. For a regression value $f$
and conditional variance $V$, let $Y$ take values in
$\{-a_Y,0,a_Y\}$ with probabilities
\begin{equation}\label{eq:ternary}
 q_{\pm a_Y}(f,V)
 =
 \frac{V+f^2\pm a_Yf}{2a_Y^2},
 \qquad
 q_0(f,V)
 =
 1-\frac{V+f^2}{a_Y^2}.
\end{equation}
These probabilities sum to one. Moreover,
\[
 \E(Y\mid X=x)
 =
 a_Y\bigl(q_{a_Y}(f,V)-q_{-a_Y}(f,V)\bigr)
 =
 f,
\]
and
\[
 \E(Y^2\mid X=x)
 =
 a_Y^2\bigl(q_{a_Y}(f,V)+q_{-a_Y}(f,V)\bigr)
 =
 V+f^2.
\]
Hence the conditional variance of $Y$ is exactly $V$. A direct calculation
also gives
\[
 \E\bigl[(Y-f)^4\mid X=x\bigr]
 =
 a_Y^2V+(6V-3a_Y^2)f^2+3f^4.
\]

At the reference point $(f,V)=(0,v)$,
\[
 q_{\pm a_Y}(0,v)=\frac{v}{2a_Y^2}>0,
 \qquad
 q_0(0,v)=1-\frac{v}{a_Y^2}>0,
\]
because $v<a_Y^2$. The centered fourth moment at this point is
$a_Y^2v<C_4$. By continuity, there are constants $\rho>0$ and $c>0$
such that whenever
\[
 |f|\le\rho,\qquad |V-v|\le\rho,
\]
with $\rho$ chosen small enough that $V\in(v_-,v_+)$, all three response
probabilities are at least $c$ and
\[
 \E\bigl[(Y-f)^4\mid X=x\bigr]\le C_4.
\]
Thus all response-distribution constraints in the model hold uniformly for
the small perturbations used below.

\begin{proof}[Proof of the lower bound in \zcref{thm:parametric}]
We prove the parametric lower bound by comparing two admissible models whose
variance parameters differ by order $n^{-1/2}$ but whose $n$-sample laws
remain a fixed distance apart. Take the uniform design density $p\equiv1$
and the zero regression function $f\equiv0$, and use the ternary response
law in \eqref{eq:ternary}. Let
\[
 V_0=v,
 \qquad
 V_1=v+\Delta_n,
 \qquad
 \Delta_n=c_0n^{-1/2},
\]
where $c_0>0$ will be chosen below. Since $v$ lies in the interior of the
admissible variance range, for sufficiently small $c_0$ both $V_0$ and
$V_1$ lie in the neighborhood considered above. The two resulting models
are therefore admissible.

Because the design distribution is the same under the two models, it is
enough to compare their response distributions. Let $P_0$ and $P_1$ denote
the corresponding one-observation laws of $Y$. Since $f=0$, their three
probabilities differ by
\[
 q_{\pm a_Y}(0,V_1)-q_{\pm a_Y}(0,V_0)
 =
 \frac{\Delta_n}{2a_Y^2},
 \qquad
 q_0(0,V_1)-q_0(0,V_0)
 =
 -\frac{\Delta_n}{a_Y^2}.
\]
All six probabilities are bounded below by a fixed positive constant when
$c_0$ is sufficiently small. Hence, using
\[
 (\sqrt{x}-\sqrt{y})^2
 =
 \frac{(x-y)^2}{(\sqrt{x}+\sqrt{y})^2},
\]
the squared Hellinger distance satisfies
\[
 H^2(P_0,P_1)
 \le
 C\sum_{y\in\{-a_Y,0,a_Y\}}
 \bigl(P_1\{Y=y\}-P_0\{Y=y\}\bigr)^2
 \le
 C\Delta_n^2.
\]

For $n$ independent observations, the product bound for Hellinger distance
gives
\[
 H^2(P_0^{\otimes n},P_1^{\otimes n})
 \le
 nH^2(P_0,P_1)
 \le
 Cn\Delta_n^2
 =
 Cc_0^2.
\]
Since total variation distance is bounded by a constant multiple of
Hellinger distance, we may choose $c_0>0$, independently of $n$, so that
\[
 \operatorname{TV}(P_0^{\otimes n},P_1^{\otimes n})\le\frac14.
\]
The two values of the target parameter are separated by
\[
 |V_1-V_0|=\Delta_n=c_0n^{-1/2}.
\]
Le Cam's two-point inequality therefore implies
\begin{equation}\label{eq:parametric-lower}
 \mathfrak r_n\ge cn^{-1/2}.
\end{equation}

The construction uses the constant regression function $f\equiv0$, which
belongs to every H\"older class under consideration, and the argument does
not otherwise depend on $s$ or $d$. Thus the lower bound holds for every
$s>0$ and every $d\ge1$.
\end{proof}

\subsection{Comparison and testing inputs}\label{sec:lower-inputs}

The local comparison will be carried out only for configurations containing
at most a prescribed number of observations, and in the sharper argument we
will also truncate the response expansion. We therefore keep two cutoffs
separate. Let $M$ denote the largest observation count for which we retain a
local comparison, and let $2R$ denote the largest total degree retained in
the response variables. Write
\[
 [m]=\{1,\ldots,m\},\qquad [0]=\varnothing.
\]
For formal variables $u=(u_1,\ldots,u_m)$ and a multi-index
$a=(a_1,\ldots,a_m)$, write
\[
 |a|=\sum_{i=1}^m a_i,
 \qquad
 [u^a]A
\]
for the coefficient of $u^a=\prod_i u_i^{a_i}$ in a formal series $A(u)$.
Since each ternary response probability is quadratic in the corresponding
regression value, only powers $u_i^0,u_i^1,u_i^2$ can enter the response
calculation. We therefore define
\begin{equation}\label{eq:response-truncation}
 \mathcal A_{m,R}
 =
 \{a\in\{0,1,2\}^m:|a|\le2R\},
 \qquad
 \mathcal T_{m,R}A
 =
 \sum_{a\in\mathcal A_{m,R}}([u^a]A)u^a.
\end{equation}
Thus $\mathcal T_{m,R}$ discards every monomial having some individual
degree greater than two or total degree greater than $2R$. Whenever we
multiply formal series below, we apply $\mathcal T_{m,R}$ after the
multiplication. Because multiplication only increases the degree of every
variable, a discarded monomial can never contribute later to a coefficient
retained by $\mathcal T_{m,R}$. This is what we mean by carrying out the
calculation in the \emph{retained algebra}.

If $R=M$, then for every $m\le M$ we retain all monomials that can enter an
$m$-observation response probability, since their total degree is at most
$2m\le2M$. This choice will be enough for the fixed-order lower bound in
this section. In \zcref{sec:refinement}, we instead let $M$ grow while
keeping $R$ fixed, and we will then have to bound the discarded
higher-degree terms. Whenever an $L^2$ norm of a coefficient function in
the design variables is used below, the norm is taken with respect to
Lebesgue measure on the corresponding product of reference cubes.

We next explain how these formal coefficients encode the actual ternary
response probabilities. Recall from \eqref{eq:ternary} that, for
$y\in\{-a_Y,0,a_Y\}$,
\[
 q_y(f,V)
 =
 \beta_y+\ell_y f+r_y(f^2+V-v),
\]
where
\[
 (\beta_{\pm a_Y},\ell_{\pm a_Y},r_{\pm a_Y})
 =
 \left(
   \frac{v}{2a_Y^2},
   \frac{\pm1}{2a_Y},
   \frac1{2a_Y^2}
 \right),
 \qquad
 (\beta_0,\ell_0,r_0)
 =
 \left(
   1-\frac v{a_Y^2},
   0,
   -\frac1{a_Y^2}
 \right).
\]
For a response vector
$y=(y_1,\ldots,y_m)\in\{-a_Y,0,a_Y\}^m$, define the coefficient
extraction map
\begin{equation}\label{eq:response-map}
 \mathcal Q_y A
 =
 \prod_{i=1}^m
 \bigl(
   \beta_{y_i}[u_i^0]
   +\ell_{y_i}[u_i^1]
   +2r_{y_i}[u_i^2]
 \bigr)A.
\end{equation}
Here $[u_i^j]$ extracts the coefficient of $u_i^j$ while leaving the
other variables unchanged, and the product applies this operation in each
of the $m$ variables. The factor $2$ in front of $[u_i^2]$ is needed
because
\[
 [u_i^2]e^{u_iF(x_i)}=\frac{F(x_i)^2}{2}.
\]
Consequently, $\mathcal Q_y$ turns the degree-one and degree-two
coefficients of the exponential series into the terms proportional to
$F$ and $F^2$ that appear in the response probability.

To incorporate the difference between the two variance parameters, consider
two local priors, denoted by $+$ and $-$, on a design density $p$ and an
auxiliary function $F$, which will later be scaled to form the regression
function $f=\kappa F$. Let $\E_+$ and $\E_-$ denote expectation under
these priors, and let $\delta>0$ denote the size of the covariance
adjustment that will separate the two variance parameters. For
$x=(x_1,\ldots,x_m)$, define the untruncated formal series
\[
 A_{\pm,m}(x;u)
 =
 e^{\mp\delta\sum_{i=1}^m u_i^2/4}
 \E_\pm\!\left[
   \prod_{i=1}^m p(x_i)
   e^{\sum_{i=1}^m u_iF(x_i)}
 \right].
\]
The exponential prefactor is chosen to encode the variance shift. Indeed,
if the actual regression perturbation has amplitude $\kappa$, so that
\[
 f=\kappa F,
 \qquad
 V_\pm=v\mp\frac{\kappa^2\delta}{2},
\]
then applying $\mathcal Q_y$ to $A_{\pm,m}(x;\kappa u)$ produces the
density-weighted joint probability of observing responses
$y_1,\ldots,y_m$ at design points $x_1,\ldots,x_m$. To see the
second-order term explicitly, the coefficient of $u_i^2$ in the
compensated exponential equals
\[
 \frac12F(x_i)^2\mp\frac{\delta}{4}.
\]
After multiplication by the factor $2r_{y_i}$ in
\eqref{eq:response-map} and by $\kappa^2$, this becomes
\[
 r_{y_i}
 \left(
   \kappa^2F(x_i)^2
   \mp\frac{\kappa^2\delta}{2}
 \right)
 =
 r_{y_i}\bigl(f(x_i)^2+V_\pm-v\bigr),
\]
which is exactly the quadratic term in $q_{y_i}(f(x_i),V_\pm)$.

This calculation also explains the second-moment matching condition used
later. For $m=1$, equality of the retained $u^2$ coefficients under the
two priors requires
\[
 \E_+[p(x)F(x)^2]-\E_-[p(x)F(x)^2]
 =
 \delta\rho_1(x),
 \qquad
 \rho_1(x)=\E_\pm p(x),
\]
where the two priors have the same design marginal $\rho_1$. Thus the
difference in the second moments of $F$ exactly compensates the difference
between $V_+$ and $V_-$. The variables $u_i$ are only formal devices for
organizing these moment identities; the observed responses themselves
remain supported on $\{-a_Y,0,a_Y\}$.

When $R<m$, truncating the formal series omits some terms in the actual
response probability. We now define this remainder directly. Introduce a
scalar amplitude $t$ and, for either prior, set
\begin{equation}\label{eq:response-remainder-definition}
 \begin{aligned}
 J_{\pm,m}(x,y;t)
 &=
 \E_\pm\prod_{i=1}^m
 p(x_i)\,
 q_{y_i}\!\left(tF(x_i),v\mp\frac{t^2\delta}{2}\right),\\
 J_{\pm,m}^{\le2R}(x,y;\kappa)
 &=
 \sum_{r=0}^{\min(R,m)}
 \kappa^{2r}[t^{2r}]J_{\pm,m}(x,y;t),\\
 \operatorname{Rem}_{\pm,m}(x,y;\kappa)
 &=
 J_{\pm,m}(x,y;\kappa)
 -
 J_{\pm,m}^{\le2R}(x,y;\kappa).
 \end{aligned}
\end{equation}
The priors constructed below are conditionally symmetric under
$F\mapsto-F$, so $J_{\pm,m}(x,y;t)$ is an even polynomial in $t$.
The quantity $J_{\pm,m}^{\le2R}$ therefore keeps exactly the terms of
degree at most $2R$ in this polynomial.

The formal-series description and the direct response expansion agree.
Indeed,
\begin{equation}\label{eq:full-retained-response-map}
 \begin{aligned}
 J_{\pm,m}(x,y;\kappa)
 &=
 \mathcal Q_y\,\mathcal T_{m,m}
 \bigl[A_{\pm,m}(x;\kappa u)\bigr],\\
 J_{\pm,m}^{\le2R}(x,y;\kappa)
 &=
 \mathcal Q_y\,\mathcal T_{m,R}
 \bigl[A_{\pm,m}(x;\kappa u)\bigr].
 \end{aligned}
\end{equation}
The first identity uses $\mathcal T_{m,m}$ only to remove monomials that
cannot affect a response probability; since the total response degree is at
most $2m$, no relevant term is lost. The second identity retains only total
degree at most $2R$. Their difference is precisely
$\operatorname{Rem}_{\pm,m}$, and this remainder vanishes whenever
$m\le R$.

Finally, for the priors used below, the design marginal
\[
 \rho_m(x)
 =
 \E_\pm\prod_{i=1}^m p(x_i)
\]
is the same under the two signs. The quantities $J_{\pm,m}$ are
density-weighted joint response probabilities; dividing by $\rho_m(x)$
gives the corresponding conditional response probabilities given the design
points. Since the design densities need only be defined almost everywhere,
all uniform bounds in the design variables may be interpreted as essential
suprema.

The next lemma constructs genuine nonnegative priors for which the retained
response coefficients agree for all observation counts in a prescribed
finite range. Its interpolation bounds may deteriorate with that fixed
range, but this affects only constants and is therefore harmless for the
minimax exponent proved in this section.

\begin{lemma}\label{lem:local}
Fix an integer $M\ge3$ and a sufficiently small constant $\eta>0$. For
$N\ge2$, define
\[
 \Lambda_N=(1+\log N)^{M-2},
 \qquad
 \delta=a_MN^{-2}\Lambda_N^{-1},
\]
where $a_M>0$ is chosen sufficiently small, independently of $N$.

For each $N\ge2$, there exist two probability laws $\Pi_+$ and $\Pi_-$
on pairs $(p,F)$, where $p$ is a design density and $F$ is an auxiliary
function that will later be scaled to form the regression function
$f=\kappa F$. These laws have the following properties.

First, under either law, $p$ is a probability density on
$Q=[0,1]^d$ satisfying
\[
 \int_Q p(x)\,dx=1,
 \qquad
 |p(x)-1|\le\eta
 \quad\text{for a.e. }x\in Q.
\]
In addition, the random density $p$ has the same distribution under
$\Pi_+$ and $\Pi_-$.

Second, there is an open neighborhood $\mathcal O\supset Q$, independent of
$N$, and a constant $L_M<\infty$, also independent of $N$, such that under
either law, $F$ almost surely extends to $\mathcal O$ and satisfies
\[
 \|F\|_{C^s(\mathcal O)}\le L_M.
\]
Moreover, under either law and conditional on $p$, $F$ and $-F$ have the
same distribution.

Let $\E_+$ and $\E_-$ denote expectations under these two laws. For
$1\le k\le M$, $x=(x_1,\ldots,x_k)\in Q^k$, and formal variables
$u=(u_1,\ldots,u_k)$, define
\[
 \mathcal M_{\pm,k}(x;u)
 =
 \E_\pm\!\left[
   \prod_{i=1}^k p(x_i)
   \exp\!\left\{\sum_{i=1}^k u_iF(x_i)\right\}
 \right]
\]
and
\[
 D_k(x;u)
 =
 \mathcal T_{k,M}\!\left[
 e^{-\delta\sum_{i=1}^k u_i^2/4}\mathcal M_{+,k}(x;u)
 -
 e^{ \delta\sum_{i=1}^k u_i^2/4}\mathcal M_{-,k}(x;u)
 \right].
\]
For one design point, all coefficients retained by $\mathcal T_{1,M}$
agree exactly under the two constructions:
\[
 D_1\equiv0.
\]
For every $2\le k\le M$, their coefficient differences 
satisfy
\begin{equation}\label{eq:fixed-contract}
 \sum_{a\in\mathcal A_{k,M}}
 \bigl\|[u^a]D_k\bigr\|_{L^2(Q^k)}^2
 \le
 C_M\delta^2N^{-d}\Lambda_N,
\end{equation}
where $C_M$ is independent of $N$.

The estimate remains valid if each variable $u_i$ is replaced by
$w_i(x_i)u_i$, where $w_i:Q\to\mathbb R$ is any measurable function
satisfying $|w_i|\le1$. More precisely, define
\[
 D_k^{(w)}(x;u)
 =
 D_k\bigl(x;
          w_1(x_1)u_1,\ldots,w_k(x_k)u_k\bigr).
\]
Then
\[
 \sum_{a\in\mathcal A_{k,M}}
 \bigl\|[u^a]D_k^{(w)}\bigr\|_{L^2(Q^k)}^2
 \le
 C_M\delta^2N^{-d}\Lambda_N,
 \qquad 2\le k\le M.
\]
\end{lemma}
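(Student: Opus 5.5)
The plan is to build $\Pi_\pm$ in three layers: a base law shared by both signs, a Hermite-type second-moment correction, and signed interpolation corrections absorbed into a common density marginal.

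\emph{Base law.} Fix smooth bump functions $\phi=(\phi_a)_{a\in\mathcal I}$ on an $N$-scale grid of $Q$, with $\|\phi_a\|_{C^s}\lesssim N^{s}$. We will rescale so that $F=N^{-s}\sum_a\xi_a\phi_a$ has bounded $C^s$ norm on a fixed neighbourhood $\mathcal O$. The coefficient vector $\xi$ is drawn from a symmetric law with bounded support, which makes $F\mapsto -F$ symmetric conditional on $p$. Both signs share this base law. The sign-dependent part then enters only through the covariance of $\xi$ and through density tilts.

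\emph{Interpolation matrices.} For each configuration $\mathbf U=(U_1,\ldots,U_k)$, $k\le M$, we use the elementary cardinal-polynomial construction from \zcref{sec:simple-geometry}. It gives a positive semidefinite $E_{k,N}(\mathbf U)$ with $\phi(U_i)^TE_{k,N}\phi(U_j)=w_i\1_{\{i=j\}}$, representation cost $\le C_kN^2\Lambda_N$, and integrated defect $\le C_kN^{-d}\Lambda_N$. The sign-dependent part of the second moments is a covariance shift $\pm\tfrac{\delta}{2}N^{2s}E$ in $\xi$, realized by bounded Hermite-type perturbations of the law of $\xi$. The $N^{2s}$ scaling cancels the $N^{-s}$ normalization in $F$.

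\emph{Compensating the variance shift.} The shift is designed so that, in $e^{\mp\delta\sum u_i^2/4}\mathcal M_{\pm,k}$, the $u_i^2$ coefficients cancel up to $\tfrac{\delta}{4}(1-w_i)$. The off-diagonal $u_iu_j$ coefficients cancel exactly because $E$ has zero off-diagonal interpolation values. For $k=1$ we need exact cancellation, $D_1\equiv0$. We arrange this by taking $w_1\equiv1$ for a single point, which is always interpolable. Higher mixed coefficients are products of lower-order covariance terms. We match them inductively in $k$ via the exponential structure: conditional on the configuration, the retained algebra sees only a Gaussian-like moment generating function $e^{\pm\delta\, u^T\Phi^TE\Phi u/4}$ times base moments. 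The residual in each coefficient is then a polynomial in $\delta(1-w_i)$ times bounded factors.

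\emph{Consistency and nonnegativity.} The correction for $k$ design points depends on $\mathbf U$. To come from one prior, it must be implemented as a tilt of the density $p$: the representation $E_{k,N}=\int\prod_i g_i(U_i)\,d\mu$ with bounded factors lets us write the required signed correction as $\E[\prod p(x_i)\cdots]$ for $p=1+\sum$ (bounded factors). Marginal consistency, meaning that integrating out $x_k$ recovers the $(k-1)$ targets, is enforced by subtracting the lower-order projections before tilting. The signed corrections have total variation $\lesssim \delta\cdot C_MN^2\Lambda_N=C_Ma_M$. Choosing $a_M$ small therefore keeps both priors nonnegative and $|p-1|\le\eta$. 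We then add a common positive mixture component so that the density marginal is identical under $\pm$.

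\emph{Bounding $D_k$.} Each retained coefficient of $D_k$ is bounded pointwise by $C_M\delta\sum_i(1-w_i(\mathbf U))$, since every uncancelled term carries at least one defect factor and the remaining factors are bounded. Squaring, integrating, and using $0\le1-w_i\le1$ gives $\|[u^a]D_k\|_{L^2}^2\le C_M\delta^2\int\sum_i(1-w_i)\le C_M\delta^2N^{-d}\Lambda_N$, which is \eqref{eq:fixed-contract}. Replacing $u_i$ by $w_i(x_i)u_i$ with $|w_i|\le1$ multiplies each coefficient by $\prod w_i^{a_i}$, which has modulus at most one. The same bound therefore holds for $D_k^{(w)}$.

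\emph{Main obstacle.} I expect the hardest step to be the simultaneous marginal consistency and nonnegativity across all $k\le M$. The interpolation weights $w_i$ for $k$ points must be compatible with those for $k-1$ points. My first attempt will be to define the corrections by inclusion–exclusion over subsets, so that integrating out a point telescopes, and to verify that the total variation stays bounded by $C_M$ times the representation cost.
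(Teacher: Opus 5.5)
Your outline has some of the right ingredients: bounded coefficient laws perturbed by Hermite terms, diagonal interpolation matrices, density tilts built from separated representations, and a small budget for positivity. However, two steps fail as written, and several essential mechanisms are missing.

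\emph{The frame and scaling are wrong.} The matrices $E_{k,N}$ from the elementary interpolation lemma live in a fixed, $N$-independent polynomial frame $\phi_\beta=\chi(t)t^\beta$, $|\beta|\le M-1$. Cardinal polynomials cannot be expressed in $N$-scale bumps. No $N^{-s}$ normalization is needed either, because $F=\sum_aB_a\phi_a$ with boundedly supported $B_a$ already has an $N$-free $C^s$ bound. Under your normalization, the covariance shift $\tfrac\delta2N^{2s}E$ has representation size $\delta N^{2s}\operatorname{cost}(E)$, which is of order $a_MN^{2s}$ rather than $a_M$. Your total-variation count silently drops this factor. As a result the perturbed coefficient laws $1\pm(\cdot)$ cannot be kept nonnegative. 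If you really used $N$-scale bumps, you would be forced to take $\delta\lesssim N^{-2s}$. That is in effect the low-smoothness construction, and it only yields the lower-bound exponent $4s/(d+4s)$, which is weaker than $2(s+1)/(d+4)$ when $s>1$. The gain for $s>1$ comes precisely from localizing through cardinal polynomials, whose coefficients cost $N$ rather than $N^s$.

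\emph{The step you flag as the main obstacle is not resolved by ``subtracting lower-order projections''.} The targets must be made marginally consistent without changing the coefficients obtained when response locations are set equal to design points.

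With ordinary projections $K\mapsto\int K\,dU_i$, the three-point $u_3^2$ coefficient keeps terms such as $\tfrac\delta2\rho_2\,\phi(U_3)^TE_{2,N}(U_1,U_2)\phi(U_3)$. This is a cardinal polynomial evaluated away from its nodes, of size up to $\delta N^2$, and the defect does not control it.

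The paper instead proceeds as follows:
\begin{itemize}
\item It uses a multiplier $b_r(u;T)$ with $\int_Qb_r\,du=1$ but $b_r(T_a;T)=0$.
\item It defines projections $P_iK=b_r(U_i;T)\int K\,dU_i$ and $Q_i=I-P_i$.
\item It reconstructs by $L_{m,j}=\sum_Ab_{A^c}K_{|A|,j}$ with $K_{k,j}=Q_1\cdots Q_kG_{k,j}$.
\end{itemize}
Then every term that omits a marked point vanishes on evaluation, and the ideal part telescopes to $\rho_m$ through $\prod_i(P_i+Q_i)=I$. Keeping multiplication by $b_r-1$ cheap also requires a Fourier enlargement of the frame.

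Two further pieces are missing:
\begin{itemize}
\item To realize a $q$-fold product simultaneously for all counts $m\le M$ from $\prod_{i=1}^m(1+t\,a(U_i))$, you must kill every other subset size. The paper does this by polarization together with a signed Chebyshev-node measure satisfying $\int t^l\,d\nu_{q,M}=\1_{\{l=q\}}$ for $l\le M$.
\item Adding a common positive mixture to dominate the signed corrections changes the design marginal $\rho_k$. The targets are proportional to $\rho_k$, so the added densities need their own corrections, and so on. The paper closes this loop with the branching fixed point $\mu=Z_{\rm path}^{-1}\delta_\varnothing+\mathcal B\mu$, which converges since $2S<1$. Each state carries only the score of its last edge, so that $\tfrac12\le1\pm(q_0+s_\omega)\le\tfrac32$ at every depth.
\end{itemize}

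Finally, once these consistency operators are in place, the defect enters through integrals over unmarked variables. So your pointwise bound by $\delta\sum_i(1-w_i(\mathbf U))$ is not available. Only an $L^2$ bound holds, and that is all the statement needs.
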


The coefficients controlled by the lemma are exactly the coefficients that
enter the joint response probabilities after the variance shift has been
incorporated into the formal series. For example, a coefficient containing
$u_i^2$ involves the density-weighted second moment
$p(x_i)F(x_i)^2$ together with the compensating term arising from the
difference between the two variance parameters. Thus $D_1=0$ says that all
terms relevant to the response law at a single design point agree exactly
under the two priors. For $2\le k\le M$,
\eqref{eq:fixed-contract} bounds, in squared $L^2(Q^k)$ norm, the total
difference between the corresponding retained coefficients for $k$ design
points.

The proof of the lemma has two separate parts. In
\zcref{sec:simple-geometry}, we construct the interpolation objects that
produce the factor $N^{-d}\Lambda_N$ in
\eqref{eq:fixed-contract}. These interpolation formulas initially specify
signed corrections to the required moments. In \zcref{sec:local-proof}, we
show that the corrections can be realized by actual probability laws
$\Pi_+$ and $\Pi_-$ on $(p,F)$, while keeping the distribution of $p$ the
same under the two laws and, conditional on $p$, making $F$ and $-F$
equally distributed. Finally, \zcref{sec:gluing} applies independent copies
of these local constructions across the design space and shows that the
resulting full-sample distributions remain close after correcting the random
total mass and renormalizing the design density.

\begin{lemma}\label{lem:gluing}
Fix $s>0$ and integers $d,R\ge1$. There exists $\eta_0>0$, depending
only on $s,d,R$, the lower and upper bounds on the design density, the
H\"older-radius bound for the regression function, the variance bounds
$v_-,v_+$, the fourth-moment bound $C_4$, and the fixed constants
$v,a_Y$ in \eqref{eq:ternary}, with the following property.

For every $K,C_{\rm loc}\ge1$, there exist constants
\[
 c_f>0,\qquad C<\infty,
\]
depending only on $s,d,R,K,C_{\rm loc}$ and the same fixed model
constants.

Let $M\ge2$, $0<\eta\le\eta_0$, $0<\delta\le1$, and $\xi\ge0$.
Suppose that $\Pi_+$ and $\Pi_-$ are two probability laws on pairs
$(p,F)$, where $p$ is a probability density on the unit cube
$Q=[0,1]^d$ and $F$ is a real-valued function on $Q$. Assume the
following.

Under either law,
\[
 \int_Q p(x)\,dx=1,
 \qquad
 |p(x)-1|\le\eta
 \quad\text{for a.e. }x\in Q.
\]
The random density $p$ has the same distribution under $\Pi_+$ and
$\Pi_-$. There is a fixed neighborhood $\mathcal O\supset Q$ such that,
under either law, $F$ almost surely extends to $\mathcal O$ and satisfies
\[
 \|F\|_{C^s(\mathcal O)}\le K.
\]
Moreover, under either law and conditional on $p$, the functions $F$ and
$-F$ have the same distribution.

Let $\E_+$ and $\E_-$ denote expectation under $\Pi_+$ and $\Pi_-$.
For $1\le k\le M$, $x=(x_1,\ldots,x_k)\in Q^k$, and formal variables
$u=(u_1,\ldots,u_k)$, define
\[
 \mathcal M_{\pm,k}(x;u)
 =
 \E_\pm\!\left[
   \prod_{i=1}^k p(x_i)
   \exp\!\left\{\sum_{i=1}^k u_iF(x_i)\right\}
 \right]
\]
and
\[
 D_k(x;u)
 =
 \mathcal T_{k,R}\!\left[
 e^{-\delta\sum_{i=1}^k u_i^2/4}\mathcal M_{+,k}(x;u)
 -
 e^{ \delta\sum_{i=1}^k u_i^2/4}\mathcal M_{-,k}(x;u)
 \right].
\]
Assume that the retained coefficients agree exactly for one design point,
\[
 D_1\equiv0,
\]
and that, for every $2\le k\le M$,
\[
 \sum_{a\in\mathcal A_{k,R}}
 \bigl\|[u^a]D_k\bigr\|_{L^2(Q^k)}^2
 \le
 C_{\rm loc}^k\delta^2\xi.
\]
Assume also that this estimate remains valid after replacing each $u_i$ by
$w_i(x_i)u_i$. More precisely, for any measurable functions
$w_i:Q\to\mathbb R$ satisfying $|w_i|\le1$, define
\[
 D_k^{(w)}(x;u)
 =
 D_k\bigl(
 x;
 w_1(x_1)u_1,\ldots,w_k(x_k)u_k
 \bigr).
\]
Then assume
\[
 \sum_{a\in\mathcal A_{k,R}}
 \bigl\|[u^a]D_k^{(w)}\bigr\|_{L^2(Q^k)}^2
 \le
 C_{\rm loc}^k\delta^2\xi,
 \qquad 2\le k\le M.
\]

Now let $\nu>0$ and let $0<h\le1$ satisfy $h^{-1}\in\mathbb N$. Set
\[
 \kappa=c_fh^s,
 \qquad
 \Delta=\kappa^2\delta,
\]
where $c_f$ is chosen above sufficiently small that $\kappa\le1$.
If
\[
 C\nu h^d\le\frac12,
\]
then there exist two priors on the model class $\Theta$ for which the
conditional variance is deterministic and equal respectively to
\[
 v-\frac{\Delta}{2}
 \qquad\text{and}\qquad
 v+\frac{\Delta}{2}.
\]

Let $\overline P_\pm^{\rm Pois}$ denote the corresponding Poisson mixture
laws defined as follows. First draw a parameter from the respective prior.
Conditional on that parameter, draw a sample size
$N_\nu\sim\operatorname{Pois}(\nu)$ independently, and then draw
$N_\nu$ i.i.d.\ observations from the resulting normalized model. The squared
Hellinger distance between these two mixture laws satisfies
\begin{equation}\label{eq:global-budget}
 \begin{aligned}
 H^2(\overline P_+^{\rm Pois},\overline P_-^{\rm Pois})
 &\le
 C\bigl\{
   \nu^2h^d\kappa^4\delta^2\xi
   +\nu\kappa^{4R+4}(C\nu h^d)^R\\
 &\hspace{6em}
   +\nu\kappa^4(C\nu h^d)^M
   +\nu h^d\eta^4
 \bigr\}.
 \end{aligned}
\end{equation}
If $R\ge M$, the term
\[
 \nu\kappa^{4R+4}(C\nu h^d)^R
\]
may be omitted.

All constants above are independent of
$M,h,\delta,\xi,\eta,\nu$ and of any interpolation cutoff $N$ that may
enter the construction of the local priors.
\end{lemma}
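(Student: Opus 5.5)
The plan is to build the global priors by tiling $[0,1]^d$ with blocks of side $h$ and placing independent rescaled copies of the local laws $\Pi_\pm$ on them. Then I would Poissonize, so that the likelihood factors over the connected components of the graph of observed design points described in the introduction. Each component is then compared using the hypotheses on $D_k$.

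\emph{Construction.} I would use a bounded number of shifted grids of blocks $Q_j$ of side comparable to $h$, together with smooth weights $\psi_j$ supported in (a dilate of) $Q_j$ and satisfying $\sum_j\psi_j^2\equiv1$. The weights have derivatives $|\partial^\gamma\psi_j|\le C h^{-|\gamma|}$, and each point lies in boundedly many supports. Given independent draws $(p_j,F_j)\sim\Pi_\pm$, I would set
\[
 f(x)=\kappa\sum_j\psi_j(x)F_j\bigl((x-x_j)/h\bigr),\qquad
 \widetilde p(x)=\prod_j p_j\bigl((x-x_j)/h\bigr),
\]
where each factor is extended by $1$ off its block. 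By the product rule, the $C^s$ bound on $F_j$ and $\kappa=c_fh^s$ with $c_f$ small, $f$ lies in the H\"older ball and satisfies $|f|\le\rho$. The variance parameter is taken as $V_\pm=v\mp\kappa^2\delta/2$, which is deterministic. Since $\sum_j\psi_j^2=1$, the per-block compensating factor $e^{\mp\delta\kappa^2\psi_j(x)^2u^2/4}$ multiplies to exactly the global shift. This is why the hypothesis must allow the substitution $u_i\mapsto w_i(x_i)u_i$ with $w_i=\psi_j$. The total mass $Z=\int\widetilde p$ is random. I would reweight both priors by $e^{-\nu(Z-1)}$ times a normalizing constant. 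This weight is a function of $p$ alone, whose law is common to the two signs, so the weighted priors still have equal $p$-marginals. The weight cancels the $e^{-\nu Z}$ factor in the Poisson process likelihood $e^{-\nu Z}\prod_i\nu\widetilde p(X_i)q_{Y_i}(f(X_i),V_\pm)$. The mixture likelihood then becomes an expectation under the \emph{product} of local laws, and it factors over connected components. Finally the density is normalized to $p=\widetilde p/Z$. Since $|Z-1|\lesssim\eta$ is a sum of independent bounded block contributions with mean structure fixed, the change of Hellinger distance can be bounded by $C\nu h^d\eta^4$. To get this I would compare the Poisson intensity measures $\nu\widetilde p$ and $\nu\widetilde p/Z$, using the fact that each local $p_j$ integrates to one, so first-order terms cancel.

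\emph{Comparison.} Conditional on the design, which is a Poisson process whose law has a common mixture under $\pm$ up to the above, the two response laws are products over components $\mathcal C$ of mixtures. By subadditivity of $H^2$ over independent products, plus a bound of Hellinger by $\chi^2$ (valid because all response probabilities are at least $c$), it suffices to bound
\[
 \sum_{\mathcal C}\E\sum_{y}\frac{\bigl(J_{+,|\mathcal C|}-J_{-,|\mathcal C|}\bigr)^2}{\rho_{|\mathcal C|}}.
\]
For $|\mathcal C|=k\le M$, the identity \eqref{eq:full-retained-response-map} writes the difference as $\mathcal Q_y D^{(w)}_k(x;\kappa u)$ plus the two remainders $\operatorname{Rem}_{\pm,k}$. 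The $D$ part has no degree-$0$ coefficient or single-point-only contribution, by $D_1\equiv0$ together with the common $p$-marginal. Hence each surviving coefficient carries total degree at least $4$ in $\kappa$ across at least two points, since the symmetry $F\mapsto-F$ kills odd degrees. This gives $\kappa^4\cdot C_{\rm loc}^k\delta^2\xi$ after Cauchy--Schwarz over the hypothesized $L^2$ bound. Summing over expected component counts, pairs contribute $\nu^2h^d$, and larger components contribute $\nu(C\nu h^d)^{k-1}$, which is summable when $C\nu h^d\le1/2$. This yields the first term of \eqref{eq:global-budget}. The remainders are of degree at least $2R+2$ in $\kappa$, giving the term $\nu\kappa^{4R+4}(C\nu h^d)^R$; they vanish if $R\ge M$. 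Components with more than $M$ points are not matched. For them I would use only the trivial bound: every term in the mixture difference is $O(\kappa^2)$ because the variance shift is already $O(\kappa^2\delta)$. This gives $\nu\kappa^4(C\nu h^d)^M$ via the Poisson tail for cluster sizes.

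\emph{Main obstacle.} The step I expect to be hardest is the bookkeeping that makes the component factorization exact. The weights $\psi_j$ overlap, so a single observation may involve several local latents. The component graph must therefore join points sharing \emph{any} block, and the coefficient of a component must be rewritten as a product over blocks of local $D$-type quantities. For blocks containing only one point of the component this uses $D_1\equiv0$, and the $w$-substitution absorbs the $\psi_j$ factors. I would handle this by expanding the exponential in block-wise factors. I would then bound the mixed differences by telescoping over blocks, one block with a $D_k$ difference and the rest with bounded moments. This costs at most a factor $C^k$, which is absorbed by the $C_{\rm loc}^k$ and $(C\nu h^d)^{k-1}$ bookkeeping.
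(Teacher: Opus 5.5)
\emph{Verdict: the proposal has a genuine gap in the mass and normalization step.} The rest of your architecture is the paper's:
\begin{itemize}
\item windows with $\sum_j w_j^2=1$, so the local compensating factors multiply to the global variance shift;
\item independent local copies on the blocks;
\item Poissonization with a common exponential tilt of the density-state law to cancel the random mass;
\item factorization over connected components and block-by-block telescoping;
\item a crude $O(\kappa^2)$ bound for components larger than $M$.
\end{itemize}

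\textbf{Why the normalization bound fails.} First a sign: to cancel $e^{-\nu Z}$ the tilt must be $e^{+\nu(Z-1)}$, not $e^{-\nu(Z-1)}$. With auxiliary intensity $\nu\widetilde p$ and this tilt, your normalization cost is $\nu\,\E_{\rm tilt}(\sqrt Z-1)^2$. That contains $\nu(\E_{\rm tilt}Z-1)^2$, and nothing in your argument centers $Z$ at $1$ under the tilted law; ``first-order terms cancel'' only gives $Z-1=O(\eta^2)$. There are two sources of a nonzero mean.
\begin{itemize}
\item The blocks must overlap: smooth $w_j$ with $\sum_jw_j^2=1$ force overlapping supports, and $p_j$ sits on the same block as $F_j$ because the local identities couple them. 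The hypotheses also allow $\E p\not\equiv1$. Hence $\E_\pi Z-1$ can be of order $\eta^2$.
\item Even when $\E_\pi Z=1$, the tilt moves the mean by $\int_0^\nu\Var_{\pi_\lambda}(Z)\,d\lambda$, which is generically of order $\nu h^d\eta^4$.
\end{itemize}
Then $\nu(\E_{\rm tilt}Z-1)^2$ exceeds your claimed $C\nu h^d\eta^4$ by a factor of order $h^{-d}$ in the first case and $\nu^2h^d\eta^4$ in the second. Since $\nu^2h^d\to\infty$ wherever the lemma is applied, the claimed bound does not follow. Your ``sum of independent bounded block contributions'' reasoning also does not apply, because after tilting the block states are no longer independent.

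\textbf{How the paper repairs it.} It uses two ingredients your proposal lacks.
\begin{itemize}
\item \emph{Calibrated intensity.} The auxiliary experiment uses intensity $\nu a\widetilde p$ and tilt $e^{\nu a(m-1)}$. Here $a$ is the solution of $a\,\E_{\pi_{\nu a}}m=1$, unique because $\frac{d}{da}\bigl(a\,\E_{\pi_{\nu a}}m\bigr)=\E_{\pi_{\nu a}}m+\nu a\Var_{\pi_{\nu a}}(m)>0$. The tilt still cancels the random mass, and the tilted mean of the total intensity is now exactly $\nu$. The normalization cost therefore becomes a pure variance, $\nu\,\E(\sqrt{am}-1)^2\le\nu a^2\Var_{\pi_{\nu a}}(m)$.
\item \emph{Variance under the tilted law.} The paper proves a Dobrushin-type coupling bound (\zcref{lem:tilted-variance}). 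It applies because only overlapping blocks interact and $\nu a\,\operatorname{osc}_{ij}(m)\lesssim\nu h^d\eta^2$ is small; this is where $C\nu h^d\le\frac12$ and the smallness of $\eta_0$ enter. It gives $\Var_{\pi_{\nu a}}(m)\lesssim\eta^4h^d$, hence the budget term $\nu h^d\eta^4$.
\end{itemize}
With these two changes, the rest of your plan goes through as in the paper.

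\textbf{Minor points.}
\begin{itemize}
\item Surviving coefficients carry degree at least $2$ in $\kappa$, not $4$. Degree-two coefficients such as that of $u_1u_2$ are only small in $L^2$, not zero. This gives $\kappa^4$ after squaring, which is what you actually use.
\item The windows must be supported inside $Q_j$, not in a dilate. This is because $p_j\equiv1$ off $Q_j$ and the local hypotheses concern only points of $Q$.
\end{itemize}
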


Apply \zcref{lem:gluing} to the two local priors supplied by
\zcref{lem:local}. In \zcref{lem:local} all response coefficients are
retained for observation counts up to $M$, so we take $R=M$. Set
\[
 K=L_M,\qquad
 C_{\rm loc}=\max\{1,C_M^{1/2}\},\qquad
 \xi=N^{-d}\Lambda_N.
\]
Then, for every $2\le k\le M$,
\[
 C_M\delta^2N^{-d}\Lambda_N
 \le
 C_{\rm loc}^k\delta^2\xi,
\]
so the coefficient bound in \zcref{lem:local} has exactly the form
required by \zcref{lem:gluing}. 
The resulting two global priors have deterministic variance parameters
separated by
\[
 \Delta=\kappa^2\delta,
\]
and \zcref{lem:gluing} gives
\begin{equation}\label{eq:fixed-global}
 \begin{aligned}
 H^2(\overline P_+^{\rm Pois},\overline P_-^{\rm Pois})
 \le C_M\bigl\{
 &\nu^2h^d\kappa^4\delta^2N^{-d}\Lambda_N\\
 &+\nu\kappa^4(C_M\nu h^d)^M
 +\nu h^d\eta^4
 \bigr\}.
 \end{aligned}
\end{equation}
Here the Poissonized experiment has a random sample size with mean $\nu$.
There is no response-truncation term in \eqref{eq:fixed-global}: for a
configuration of $m\le M$ observations, the product of the $m$ quadratic
response probabilities has total degree at most $2m\le2M$, and the choice
$R=M$ therefore retains every term.

We next transfer this comparison to the fixed-$n$ experiment. Set
$\nu=2n$. From a sample generated by the Poissonized experiment, independently
randomize the order of the observations. If the Poisson sample contains at
least $n$ observations, retain the first $n$; otherwise output a distinguished
failure symbol $\dagger$. Since the Poisson sample size is independent of
the parameter, the success probability
\[
 p_n=\Pp\{\operatorname{Pois}(2n)\ge n\}
\]
is the same under both priors and satisfies $p_n\to1$. Conditional on
success and on any fixed parameter value, the retained observations are
$n$ iid observations from that parameter's normalized model. Hence, if
$\overline P_{\pm,n}$ denote the corresponding fixed-$n$ mixture laws, the
output laws of this transformation are
\[
 p_n\overline P_{\pm,n}+(1-p_n)\delta_\dagger.
\]
Hellinger distance contracts under this transformation, while the common
failure atom contributes no discrepancy. Therefore
\begin{equation}\label{eq:poisson-transfer}
 p_n
 H^2(\overline P_{+,n},\overline P_{-,n})
 \le
 H^2(\overline P_+^{\rm Pois},\overline P_-^{\rm Pois}).
\end{equation}

It remains to convert closeness of the two fixed-$n$ mixture laws into a
risk lower bound. Suppose the two priors are supported on models with
deterministic variance values $V_+$ and $V_-$ satisfying
\[
 |V_+-V_-|=\Delta.
\]
Given any estimator $\widehat V$, define a test by comparing $\widehat V$
with the midpoint $(V_++V_-)/2$. Whenever this test chooses the wrong
hypothesis, the estimation error under the true hypothesis is at least
$\Delta/2$. Since both priors are supported on $\Theta$, the worst-case
squared risk is at least the larger of the two corresponding Bayes risks.
Consequently,
\begin{align}
 \sup_{\theta\in\Theta}
 \E_\theta(\widehat V-V)^2
 &\ge
 \frac{\Delta^2}{8}
 \left(
   1-\TV(\overline P_{+,n},\overline P_{-,n})
 \right).
 \label{eq:testing}
\end{align}
Indeed, the sum of the two testing error probabilities is at least
$1-\TV(\overline P_{+,n},\overline P_{-,n})$. Thus, whenever
\[
 \TV(\overline P_{+,n},\overline P_{-,n})\le\frac12,
\]
we obtain
\[
 \sup_{\theta\in\Theta}
 \E_\theta(\widehat V-V)^2
 \ge
 \frac{\Delta^2}{16}.
\]
Taking the infimum over estimators and then square roots gives
\[
 \mathfrak r_n\ge\frac{\Delta}{4}.
\]

\subsection{The fixed-order choice of parameters}\label{sec:fixed-order-proof}

\begin{proof}[Proof of the lower bound in \zcref{thm:exponent}]
Recall that we are in the regime $d>4s$, and write
\[
 \lambda=\frac{2(s+1)}{d+4}.
\]
Fix $\epsilon>0$. We will choose the largest matched observation count $M$
once and for all, and then let the remaining parameters depend on $n$.
Choose an integer $M$ large enough that
\[
 M>\frac{2(d+4s)}{d-4s},
 \qquad
 \frac{4(s-1)}{M(d+4)}<\frac{\epsilon}{2}.
\]
The first inequality will ensure that the interpolation cutoff $N$ tends to
infinity, while the second makes the loss caused by keeping $M$ fixed
smaller than $\epsilon/2$ in the final exponent.

With this $M$ fixed, first choose a fixed $0<\eta\le\eta_0$, where
$\eta_0$ is supplied by \zcref{lem:gluing}. Apply \zcref{lem:local}
with this value of $\eta$, and let $L_M$ and $C_M$ be the resulting
smoothness and coefficient constants. We may then apply
\zcref{lem:gluing} with $R=M$, $K=L_M$, and
$C_{\rm loc}=\max\{1,C_M^{1/2}\}$. All constants are now fixed and
may depend on $M$ but not on $n$.

Set
\[
 \nu=2n,
 \qquad
 h=\left\lceil\nu^{(1+2/M)/d}\right\rceil^{-1},
 \qquad
 \kappa=c_fh^s,
\]
and choose the interpolation cutoff
\[
 N
 =
 \left\lceil
   \bigl(\nu^2h^{d+4s}\bigr)^{1/(d+4)}
 \right\rceil.
\]
The definition of $h$ gives
\[
 h\asymp \nu^{-(1+2/M)/d},
 \qquad
 \nu h^d\asymp \nu^{-2/M}.
\]
In particular, $\nu h^d\to0$, so the sparsity condition
$C_M\nu h^d\le1/2$ required by \zcref{lem:gluing} holds for all
sufficiently large $n$.

We next verify that $N$ grows polynomially. Since
\[
 \bigl(\nu^2h^{d+4s}\bigr)^{1/(d+4)}
 \asymp \nu^{b_M},
\]
where
\[
 b_M
 =
 \frac{1-4s/d-(2/M)(1+4s/d)}{d+4},
\]
the first condition on $M$ is exactly the condition $b_M>0$. Hence
\[
 N\asymp\nu^{b_M}\longrightarrow\infty.
\]

We now check the Hellinger bound in \eqref{eq:fixed-global}. Recall that
\[
 \delta=a_MN^{-2}\Lambda_N^{-1},
 \qquad
 \Lambda_N=(1+\log N)^{M-2}.
\]
For the first term in \eqref{eq:fixed-global},
\begin{align*}
 \nu^2h^d\kappa^4\delta^2N^{-d}\Lambda_N
 &=
 C_M\nu^2h^{d+4s}
      N^{-d-4}\Lambda_N^{-1}\\
 &=
 O_M(\Lambda_N^{-1}),
\end{align*}
because $N^{d+4}\asymp\nu^2h^{d+4s}$. Since $M\ge3$ and
$N\to\infty$, this term tends to zero.

For the second term, using $\nu h^d\asymp\nu^{-2/M}$ gives
\[
 \nu\kappa^4(C_M\nu h^d)^M
 =
 O_M\!\left(
   \nu\kappa^4\nu^{-2}
 \right)
 =
 O_M(\nu^{-1}\kappa^4),
\]
which also tends to zero. Finally,
\[
 \nu h^d\eta^4=O_M(\nu^{-2/M})\longrightarrow0.
\]
It follows from \eqref{eq:fixed-global} that
\[
 H^2(\overline P_+^{\rm Pois},\overline P_-^{\rm Pois})
 \longrightarrow0.
\]
Since $p_n=\Pp\{\operatorname{Pois}(2n)\ge n\}\to1$,
\eqref{eq:poisson-transfer} implies
\[
 H^2(\overline P_{+,n},\overline P_{-,n})\longrightarrow0.
\]
The total variation distance therefore also tends to zero, and in
particular
\[
 \TV(\overline P_{+,n},\overline P_{-,n})\le\frac12
\]
for all sufficiently large $n$.

It remains to compute the separation between the two variance parameters.
By construction,
\[
 \Delta=\kappa^2\delta
 =
 c_f^2a_Mh^{2s}N^{-2}\Lambda_N^{-1}.
\]
Using
\[
 h\asymp\nu^{-(1+2/M)/d},
 \qquad
 N\asymp\nu^{b_M},
\]
we obtain
\[
 h^{2s}N^{-2}
 \asymp
 \nu^{-\lambda-\frac{4(s-1)}{M(d+4)}}.
\]
Also $N\asymp\nu^{b_M}$ with $b_M>0$, so
\[
 \Lambda_N
 \asymp_M
 (1+\log\nu)^{M-2}.
\]
Consequently,
\[
 \Delta
 \ge
 c_M
 \nu^{-\lambda-\frac{4(s-1)}{M(d+4)}}
 (1+\log\nu)^{-(M-2)}.
\]
By our choice of $M$,
\[
 \frac{4(s-1)}{M(d+4)}<\frac{\epsilon}{2}.
\]
Since $M$ is now fixed,
\[
 (1+\log\nu)^{M-2}\le \nu^{\epsilon/2}
\]
for all sufficiently large $\nu$. Therefore
\[
 \Delta
 \ge
 c_\epsilon\nu^{-\lambda-\epsilon}
 \ge
 c_\epsilon' n^{-\lambda-\epsilon}.
\]

The testing bound \eqref{eq:testing} now applies and gives
\[
 \mathfrak r_n
 \ge
 \frac{\Delta}{4}
 \ge
 c_\epsilon n^{-\lambda-\epsilon}.
\]
Thus, for every $\epsilon>0$, the minimax root-mean-square risk is bounded
below by $c_\epsilon n^{-\lambda-\epsilon}$. Combining this with the upper
bound of order $n^{-\lambda}$ and then letting $\epsilon\downarrow0$
proves the claimed minimax exponent.
\end{proof}

\section{Refinement to the stretched-exponential bracket}\label{sec:refinement}

The proof in \zcref{sec:lower-exponent} keeps the largest matched
observation count $M$ fixed. For that purpose, the logarithmic factor
$\Lambda_N=(1+\log N)^{M-2}$ in \zcref{lem:local} can be absorbed into an
arbitrarily small polynomial loss and therefore does not change the
limiting exponent. To obtain the explicit
stretched-exponential loss in \zcref{thm:bracket}, however, we will let
$M$ grow with $n$. We therefore need a local interpolation construction
whose constants are controlled uniformly in $M$. The sharper geometric
estimate needed for this purpose is proved in \zcref{sec:geometry}.

We continue to use the truncation operator $\mathcal T_{k,R}$ from
\eqref{eq:response-truncation}, but now the two cutoffs play different
roles: the response-degree cutoff $R$ is fixed, while the largest
observation count $M$ is allowed to grow. Consequently, when $k>R$ the
truncated formal series no longer contains the entire $k$-observation
response probability, and the omitted terms must also be controlled.
The following lemma supplies both the required coefficient bound and the
corresponding response-remainder bound. Its proof is given in
\zcref{sec:local-applications}.

\begin{lemma}\label{lem:uniform-local}
Fix $s>1$ and an integer $d\ge3$, and fix an integer $R\ge1$.
There exists $\eta_1>0$ such that, for every fixed
$0<\eta\le\eta_1$, there are constants
\[
 0<a\le1,\qquad D>1,\qquad C<\infty,\qquad L<\infty,
\]
depending only on $s,d,R,\eta$, the variance bounds $v_-,v_+$,
the fourth-moment bound $C_4$, and the fixed constants $v,a_Y$ in
\eqref{eq:ternary}, but not on $M$ or $N$, with the following property.

For every pair of integers $M,N\ge2$ and every
\[
 0<c\le aD^{-M},
 \qquad
 \delta=cN^{-2},
\]
there exist two probability laws $\Pi_+$ and $\Pi_-$ on pairs $(p,F)$,
where $p$ is a probability density on the reference cube
$Q=[0,1]^d$ and $F$ is an auxiliary function that will later be scaled
to form the regression function.

Under either law,
\[
 \int_Q p(x)\,dx=1,
 \qquad
 |p(x)-1|\le\eta
 \quad\text{for a.e. }x\in Q,
\]
and the random density $p$ has the same distribution under $\Pi_+$ and
$\Pi_-$. There is a fixed open neighborhood $\mathcal O\supset Q$,
independent of $M$ and $N$, such that under either law, $F$ almost
surely extends to $\mathcal O$ and satisfies
\[
 \|F\|_{C^s(\mathcal O)}\le L.
\]
Moreover, under either law and conditional on $p$, the functions $F$
and $-F$ have the same distribution.

Let $\E_+$ and $\E_-$ denote expectation under $\Pi_+$ and $\Pi_-$.
For $1\le k\le M$, $x=(x_1,\ldots,x_k)\in Q^k$, and formal variables
$u=(u_1,\ldots,u_k)$, define
\[
 \mathcal M_{\pm,k}(x;u)
 =
 \E_\pm\!\left[
   \prod_{i=1}^k p(x_i)
   \exp\!\left\{\sum_{i=1}^k u_iF(x_i)\right\}
 \right]
\]
and
\[
 D_k(x;u)
 =
 \mathcal T_{k,R}\!\left[
 e^{-\delta\sum_{i=1}^k u_i^2/4}\mathcal M_{+,k}(x;u)
 -
 e^{ \delta\sum_{i=1}^k u_i^2/4}\mathcal M_{-,k}(x;u)
 \right].
\]
For one design point, all coefficients retained by
$\mathcal T_{1,R}$ agree exactly:
\[
 D_1\equiv0.
\]
For every $2\le k\le M$, the retained coefficient differences satisfy
\begin{equation}\label{eq:uniform-contract}
 \sum_{a\in\mathcal A_{k,R}}
 \bigl\|[u^a]D_k\bigr\|_{L^2(Q^k)}^2
 \le
 C^k\delta^2N^{-d}.
\end{equation}

The estimate in \eqref{eq:uniform-contract} remains valid after replacing
each $u_i$ by a bounded deterministic multiple depending on $x_i$.
More precisely, if $w_i:Q\to\mathbb R$ are measurable and
$|w_i|\le1$, define
\[
 D_k^{(w)}(x;u)
 =
 D_k\bigl(
 x;
 w_1(x_1)u_1,\ldots,w_k(x_k)u_k
 \bigr).
\]
Then
\[
 \sum_{a\in\mathcal A_{k,R}}
 \bigl\|[u^a]D_k^{(w)}\bigr\|_{L^2(Q^k)}^2
 \le
 C^k\delta^2N^{-d},
 \qquad 2\le k\le M.
\]

There is also a constant $\kappa_0>0$, depending only on $L$, the
variance bounds $v_-,v_+$, and the fixed constants $v,a_Y,C_4$ in the
ternary response construction, such that the following holds for
every $0<\kappa\le\kappa_0$. Let
$\operatorname{Rem}_{\pm,k}$ be the part of the density-weighted
$k$-response probability discarded by the degree-$2R$ truncation, as
defined in \eqref{eq:response-remainder-definition}. Then
\begin{equation}\label{eq:response-remainder-bound}
 \max_{\pm}
 \operatorname*{ess\,sup}_{x\in Q^k}
 \max_{y\in\{-a_Y,0,a_Y\}^k}
 \left|
   \operatorname{Rem}_{\pm,k}(x,y;\kappa)
 \right|
 \le
 \1_{\{k>R\}}C^k\kappa^{2R+2},
 \qquad 1\le k\le M.
\end{equation}
In particular, the remainder is identically zero when $k\le R$.

Let
\[
 \rho_k(x)
 =
 \E_\pm\prod_{i=1}^k p(x_i)
\]
denote the common density of the $k$ design points under the two priors.
After dividing the density-weighted response probabilities by
$\rho_k(x)$ to obtain conditional response probabilities, the bound
\eqref{eq:response-remainder-bound} remains valid after increasing $C$.
Consequently, both before and after this division,
\[
 \sum_{y\in\{-a_Y,0,a_Y\}^k}
 \bigl|\operatorname{Rem}_{\pm,k}(x,y;\kappa)\bigr|^2
 \le
 \1_{\{k>R\}}C^k\kappa^{4R+4}
\]
for almost every $x\in Q^k$, after increasing $C$ once more if necessary.
\end{lemma}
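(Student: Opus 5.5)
Lemma~\ref{lem:uniform-local} has two parts. The coefficient bound \eqref{eq:uniform-contract} is proved by the same two-stage construction as Lemma~\ref{lem:local}. The only change is that the elementary interpolation estimates are replaced by the uniform geometric estimates of \zcref{sec:geometry}, namely representation cost at most $C^kN^2$ and integrated defect at most $C^kN^{-d}$.

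\emph{Step 1: the local priors.} On $Q$, take a random field of the form
\[
 F(x)=\sum_{a\in\mathcal I}\xi_a\phi_a(x)
\]
built from the fixed smooth interpolation family $\phi$. Under $\Pi_\pm$ the coefficient vector $\xi$ is symmetric, and its covariance differs between the two signs by $\delta E_{k,N}$-type corrections realized through the product representation. Conditional on $p$, this symmetry in $\xi$ gives the required symmetry of $F$ and $-F$. The design density $p=1+(\text{bounded signed corrections})$ is common to both signs. Signed corrections of size $\delta\cdot C^kN^2=cC^k$ are absorbed into a common nonnegative prior marginal; this is possible for every $k\le M$ precisely when $c\le aD^{-M}$, which yields $|p-1|\le\eta$. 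The smoothness bound $\|F\|_{C^s(\mathcal O)}\le L$ is uniform because $\phi$ is a fixed finite family and $\xi$ is bounded.

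\emph{Step 2: the coefficient bound.} The diagonal identity $\phi(U_i)^TE\phi(U_j)=w_i\1_{\{i=j\}}$ makes the $u_i^2$ coefficients cancel against the prefactor $e^{\mp\delta\sum u_i^2/4}$, except for the defect $\delta(1-w_i)$. Here $D_1\equiv0$ is arranged by exact matching at one point, that is, by marginal consistency. Expanding $D_k$ in the retained algebra, every surviving coefficient is a product involving at least one defect factor $\delta(1-w_i)$, multiplied by at most $C^k$ bounded moment terms. Squaring, using $0\le1-w_i\le1$, and integrating gives
\[
 C^k\delta^2\int_{Q^k}\sum_{i=1}^k(1-w_i)\le C^k\delta^2N^{-d}.
\]
The number of multi-indices in $\mathcal A_{k,R}$ is at most $3^k$, which is absorbed into $C^k$. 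Replacing each $u_i$ by $w_i(x_i)u_i$ with $|w_i|\le1$ only multiplies coefficients by bounded factors, so the same bound holds for $D_k^{(w)}$.

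\emph{Step 3: the response remainder.} Each response probability
\[
 q_{y_i}\!\left(tF(x_i),v\mp\frac{t^2\delta}{2}\right)
\]
is a quadratic polynomial in $t$ whose coefficients are bounded by $C(1+L^2)$, since $\delta\le1$. Hence $J_{\pm,k}(x,y;t)$ is an even polynomial of degree at most $2k$ whose $t^{2r}$ coefficient is at most $\binom{2k}{2r}C^k\le C^k$, using $p\le1+\eta$. Summing the terms with $r>R$ at $t=\kappa\le\kappa_0\le1$ gives a bound of $C^k\kappa^{2R+2}$. The remainder vanishes when $k\le R$, because then all terms are retained.

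The density $\rho_k$ satisfies $\rho_k\ge(1-\eta)^k$, so dividing by it costs only a factor $C^k$. The $\ell^2$ sum over the $3^k$ response vectors $y$ adds another factor $C^k$. Together these give the squared remainder bound $\1_{\{k>R\}}C^k\kappa^{4R+4}$.

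\emph{Main obstacle.} The hard part is Step~1 with constants uniform in $k$. The signed corrections must be realized simultaneously for all $k\le M$ while keeping marginal consistency across $k$. This requires the balanced recursive cardinal-polynomial construction of \zcref{sec:geometry}, and its exponential-in-$k$ cost is exactly what forces the restriction $c\le aD^{-M}$.
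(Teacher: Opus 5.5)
Your architecture matches the paper's. You feed the estimates of \zcref{lem:geometry} into the construction behind \zcref{prop:local-construction}, then bound the retained coefficients and the response remainder, and your Step~3 is essentially the paper's remainder argument. The gap is that the one thing this lemma adds to \zcref{lem:local} is asserted rather than proved, namely that $a,D,C,L$ do not depend on $M$. \zcref{prop:local-construction} gives no control of $C_{M,R}$ in $M$. So substituting $A_k\le C^kN^2$ and $B_k\le C^kN^{-d}$ into \eqref{eq:local-budget} and \eqref{eq:finite-local-contract} yields neither $c\le aD^{-M}$ nor the factor $C^k$, and it is not true that ``the only change'' is the interpolation input. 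You must reopen the construction and check that every constant grows at most exponentially:
\begin{itemize}
\item $\operatorname{cost}(G_{k,j})\le C_R^kc^j$;
\item the projections $P_i,Q_i$, the multipliers $b_r-1$, the subset reconstruction and the baseline subtraction give $\operatorname{cost}(\widetilde H_{k,j})\le K_R^kc^j$;
\item polarization costs $e^k$;
\item the Chebyshev inversion of \zcref{lem:local-scalar} costs $I_M\le C(3/\eta)^M$.
\end{itemize}
Together these give $S\le cD_R^M$ as in \eqref{eq:uniform-cost-accounting}. The factor $I_M$ alone would force a restriction $c\le aD^{-M}$ even if the interpolation cost were bounded in $k$, so the geometric cost is not ``exactly'' what forces it. Likewise, $|p-1|\le\eta$ does not come from smallness of $c$. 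It holds by construction, since every density is $1+ta$ with $|t|\le\eta$ and $\|a\|_\infty\le1$. What smallness of $c$ buys is $\delta\le\delta_0$ and $2S<1$, so that the branching series defining the common density-state law in \eqref{eq:positive-fixed-point} converges.

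Your Step~2 has the same problem. The claim that every coefficient of $D_k$ carries a factor $\delta(1-w_i)$ is the computation for an idealized field whose covariance at a $k$-point configuration is shifted by $\frac{\delta}{2}E_{k,N}(\mathbf U)$. The actual priors realize such corrections only as density-weighted moments that must be consistent across counts. So the $m$-point correction is rebuilt from the targets at every count $k'\le m$ through \eqref{eq:local-reconstruct}. The true error is then \eqref{eq:local-defect}: a sum over $S\subseteq[m]\setminus A$ of defects at counts $j+|S|$, composed with up to $m$ operators $Q_i$ and multipliers $b_r$. Reaching $C^m\delta^2N^{-d}$ requires $\|e_{m,A}\|_{L^2(Q^m)}\le K_R^m\delta^jN^{-d/2}$, using $B_{k'}\le C^{k'}N^{-d}$ for all $k'\le m$.

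The corrections must also realize every odd term $j\le R$ of the $\tanh$ series in \eqref{eq:compensation-identity}, not just a covariance shift. For $R\ge3$, an unmatched degree-$6$ term leaves coefficients of size $\delta^3$, which exceed $\delta N^{-d/2}$ for large $N$ once $d>8$.

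Finally, the frame cannot be a fixed finite family. Cardinal polynomials of unbounded degree need the countable weighted monomials $\chi(t)A_0^{-|\alpha|}t^\alpha$ of \zcref{lem:geometry}, further enlarged by Fourier functions. The $M$-uniform bound on $\|F\|_{C^s(\mathcal O)}$ comes from $\sum_a\|\phi_a\|_{C^s(\mathcal O)}<\infty$ together with the $R$-dependent bound on the coefficient law, not from finiteness.
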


Apply \zcref{lem:gluing} to the priors supplied by
\zcref{lem:uniform-local}, with
\[
 K=L,\qquad
 C_{\rm loc}=\max\{1,C\},\qquad
 \xi=N^{-d},\qquad
 \delta=cN^{-2}.
\]
Then \eqref{eq:uniform-contract}, including its version after the
substitutions $u_i\mapsto w_i(x_i)u_i$, gives exactly the coefficient
bounds required by \zcref{lem:gluing}. With
\[
 \kappa=c_fh^s,
\]
the resulting two global priors have deterministic variance parameters
separated by
\[
 \Delta=\kappa^2\delta=\kappa^2cN^{-2}.
\] 
The Hellinger bound \eqref{eq:global-budget} becomes
\begin{equation}\label{eq:refined-global}
 \begin{aligned}
 H^2(\overline P_+^{\rm Pois},\overline P_-^{\rm Pois})
 \le C\bigl\{
 &\nu^2h^d\kappa^4c^2N^{-d-4}\\
 &+\nu\kappa^{4R+4}(C\nu h^d)^R
 +\nu\kappa^4(C\nu h^d)^M
 +\nu h^d\eta^4
 \bigr\}.
 \end{aligned}
\end{equation}
The four terms respectively control the retained coefficient mismatch, the
response terms of degree greater than $2R$, configurations containing more
than $M$ observations in one connected local component, and the error caused
by normalizing the perturbed design density. The restriction
\[
 c\le aD^{-M}
\]
from \zcref{lem:uniform-local} is the only requirement that forces the
amplitude $c$ to decrease as $M$ grows.

\begin{proof}[Proof of the lower bound in \zcref{thm:main}]
Recall that $s>1$ and $d>4s$, and write
\[
 \lambda=\frac{2(s+1)}{d+4}.
\]
We now choose the parameters so that
all four terms in \eqref{eq:refined-global} remain small while the variance
separation $\Delta$ is as large as possible.

First choose an integer $R$ such that
\[
 4s(R+1)>d.
\]
This condition will make the response-truncation term in
\eqref{eq:refined-global} tend to zero. Next choose
\[
 0<\eta\le\min\{\eta_0,\eta_1\},
\]
where $\eta_0$ is supplied by \zcref{lem:gluing} and $\eta_1$ by
\zcref{lem:uniform-local}. Apply \zcref{lem:uniform-local} with this
value of $\eta$, and let $C$ and $K_{\rm loc}$ denote its coefficient and
smoothness constants. We may then apply \zcref{lem:gluing} with
\[
 K=K_{\rm loc},\qquad C_{\rm loc}=\max\{1,C\},
\]
which fixes $c_f$ and the constants in the global comparison. From this
point onward, $R,\eta$, and all constants supplied by the two lemmas are
fixed.

For clarity, let $C_{\rm adm}$ denote a constant for which the smallness
condition in \zcref{lem:gluing} is satisfied whenever
\[
 C_{\rm adm}\nu h^d\le\frac12.
\]
Let $C_{\rm occ}$ dominate the constants multiplying $\nu h^d$ inside the
second and third terms of \eqref{eq:refined-global}, and let $C_{\rm bud}$
denote its overall multiplicative constant. These constants are fixed and
do not depend on $n,M,h$, or $N$. Choose
\[
 0<c_0\le\min\{a/2,1\},
\]
and choose
\[
 0<t_0\le
 \min\left\{
  1,\frac{1}{2C_{\rm adm}},\frac{1}{2C_{\rm occ}}
 \right\}.
\]
Finally choose a fixed constant
\[
 B\ge128C_{\rm bud}.
\]

Set
\[
 \nu=2n,
 \qquad
 L=\log\nu,
\]
and define
\begin{equation}\label{eq:scales}
 M=\lceil\sqrt L\rceil,
 \qquad
 c=c_0D^{-M},
 \qquad
 t=t_0e^{-2\sqrt L},
 \qquad
 h=\left\lceil(\nu/t)^{1/d}\right\rceil^{-1},
 \qquad
 \tau=\nu h^d,
 \qquad
 \kappa=c_fh^s.
\end{equation}
Here $M$ is the largest local observation count that we match, while $t$
is the target expected number of observations in a region of volume $h^d$.
The rounding in the definition of $h$ gives
\[
 2^{-d}t\le\tau\le t
\]
for all sufficiently large $\nu$. Hence
\[
 \tau\asymp e^{-2\sqrt L},
 \qquad
 C_{\rm adm}\tau\le\frac12,
\]
and, by the choice of $t_0$,
\[
 C_{\rm occ}\tau
 \le
 \frac12e^{-2\sqrt L}.
\]
Thus the smallness condition of \zcref{lem:gluing} holds.

We choose $N$ to make the leading term in
\eqref{eq:refined-global} a fixed small constant. Define
\[
 N_*
 =
 \bigl(B\nu^2h^d\kappa^4c^2\bigr)^{1/(d+4)}
\]
and set
\[
 N=\left\lceil\max\{2,N_*\}\right\rceil.
\]
To verify that $N_*$ tends to infinity, note that
\[
 h^d=\frac{\tau}{\nu}
\]
and $\log\tau=O(\sqrt L)$. Since $\kappa=c_fh^s$ and
$c=c_0D^{-M}$ with $M=O(\sqrt L)$,
\[
 \log N_*
 =
 \frac{1-4s/d}{d+4}L+O(\sqrt L).
\]
Because $d>4s$, the coefficient of $L$ is positive, and hence
$N_*\to\infty$. In particular, for all sufficiently large $n$,
\[
 N_*\le N\le2N_*.
\]

We now bound the four terms in \eqref{eq:refined-global}. By the definition
of $N_*$ and the fact that $N\ge N_*$,
\[
 \nu^2h^d\kappa^4c^2N^{-d-4}
 \le B^{-1}.
\]
For the response-truncation term, $\kappa=c_fh^s$ and
$h^d=\tau/\nu$ give
\[
 \nu\kappa^{4R+4}(C_{\rm occ}\tau)^R
 \le
 C\nu^{1-4s(R+1)/d}e^{-2R\sqrt L}.
\]
The exponent $1-4s(R+1)/d$ is negative by our choice of $R$, so this
quantity tends to zero.

For configurations containing more than $M$ observations,
\[
 \nu\kappa^4(C_{\rm occ}\tau)^M
 \le
 C\nu^{1-4s/d}e^{-2M\sqrt L}.
\]
Since $M=\lceil\sqrt L\rceil$, we have
$M\sqrt L\ge L$, and therefore this term also tends to zero. Finally,
\[
 \nu h^d\eta^4
 =
 \tau\eta^4
 \le
 t_0\eta^4e^{-2\sqrt L}
 \longrightarrow0.
\]

Multiplying the first estimate by the prefactor $C_{\rm bud}$ gives at most
\[
 \frac{C_{\rm bud}}{B}\le\frac1{128}.
\]
For all sufficiently large $n$, the other three contributions together are
also at most $1/128$. Hence
\[
 H^2(\overline P_+^{\rm Pois},\overline P_-^{\rm Pois})
 \le\frac1{64}.
\]
The transfer inequality \eqref{eq:poisson-transfer} and
\[
 \Pp\{\operatorname{Pois}(2n)\ge n\}\longrightarrow1
\]
then imply, for all sufficiently large $n$,
\[
 H^2(\overline P_{+,n},\overline P_{-,n})
 \le\frac1{32}.
\]
The standard comparison between total variation and Hellinger distance
therefore gives
\[
 \TV(\overline P_{+,n},\overline P_{-,n})<\frac12.
\]

It remains to lower-bound the separation of the two variance parameters.
Since $N\le2N_*$ eventually,
\[
 \Delta
 =
 \kappa^2cN^{-2}
 \ge
 C\kappa^2c
 \bigl(\nu^2h^d\kappa^4c^2\bigr)^{-2/(d+4)}.
\]
Using $\kappa=c_fh^s$ and then $h^d=\tau/\nu$ gives
\begin{align*}
 \Delta
 &\ge
 C_*
 \nu^{-4/(d+4)}
 h^{2d(s-1)/(d+4)}
 c^{d/(d+4)}\\
 &=
 C_*
 \nu^{-\lambda}
 \tau^{2(s-1)/(d+4)}
 c^{d/(d+4)}.
\end{align*}
Now
\[
 \tau\ge2^{-d}t_0e^{-2\sqrt L},
 \qquad
 c=c_0D^{-M},
 \qquad
 M\le\sqrt L+1.
\]
Consequently,
\begin{align}
 \Delta
 &\ge
 C_{**}\nu^{-\lambda}
 \exp\left\{
 -\frac{4(s-1)+d\log D}{d+4}\sqrt{\log\nu}
 \right\}.
 \label{eq:separation}
\end{align}
The constants lost through $t_0,c_0$, and the additive $1$ in
$M\le\sqrt L+1$ have been absorbed into $C_{**}$.

Since the two fixed-$n$ mixture laws have total variation distance below
$1/2$, \eqref{eq:testing} yields
\[
 \mathfrak r_n\ge\frac{\Delta}{4}.
\]
Finally, $\nu=2n$ implies
$\nu^{-\lambda}\asymp n^{-\lambda}$ and
$\sqrt{\log\nu}=\sqrt{\log n}+O((\log n)^{-1/2})$. Absorbing these fixed
changes into the constants gives the lower bound in
\zcref{thm:main}.
\end{proof}

The stretched-exponential loss comes from balancing two competing
requirements. Matching all local configurations up to size $M$ forces the
amplitude $c$ to contain the factor $D^{-M}$, producing a loss exponential
in $M$. On the other hand, configurations with more than $M$ observations
must be made rare by taking the expected local occupancy $\tau$ small; to
make their contribution negligible, $\tau$ must be roughly exponential in
$-(\log n)/M$. These two losses are balanced when
\[
 M\asymp\frac{\log n}{M},
\]
or equivalently
\[
 M\asymp\sqrt{\log n},
\]
which produces the factor $e^{-C\sqrt{\log n}}$.

\section{Constructing priors with prescribed moments}\label{sec:local-proof}
This section proves \zcref{lem:local,lem:uniform-local}. We first
introduce the bounded random coefficients used to perturb the regression
function and specify how signed moment corrections will be represented.
We then prove two lemmas that turn such prescribed corrections into
probability laws with the required boundedness and positivity properties.

In \zcref{sec:local-construction}, we determine how the density-weighted
moments under the two priors must differ in order to compensate for the
change in the variance parameter. We then state
\zcref{prop:local-construction}, which constructs priors realizing these
moment requirements for finitely many design points. Its proof first makes
the moment requirements compatible when design points are integrated out,
then realizes the resulting signed corrections by genuine probability laws,
and finally bounds the remaining coefficient errors. In
\zcref{sec:local-applications}, we choose the interpolation constructions
needed for \zcref{lem:local,lem:uniform-local} and verify the corresponding
bounds on their constants.

\subsection{Representation conventions and preliminary lemmas}
\label{sec:local-preliminaries}

Fix the response-degree cutoff $R\ge1$ and let $Q=[0,1]^d$. We will use
two kinds of spatial variables for different purposes. Variables
\[
 \mathbf U=(U_1,\ldots,U_m)\in Q^m
\]
represent design locations. They carry factors of the design density and
will sometimes be integrated out when we impose consistency between the
moment requirements for different numbers of design points. By contrast,
\[
 T=(T_1,\ldots,T_r)\in Q^r
\]
will be used as placeholder response locations when we convert coefficient
polynomials into multilinear functions of spatial arguments. The variables
$T_i$ remain fixed during integrations in the $U_i$ variables and are set
equal to design locations only after the required consistency relations
have been established. For a set of labels $A\subset[m]$, write
\[
 U_A=(U_i)_{i\in A},
\]
with the coordinates listed in increasing order of their labels.

We first choose the smooth functions from which the random regression
functions will be built. Let $\mathcal I$ be a finite or countable index
set containing $0$, and let
\[
 \phi=(\phi_a)_{a\in\mathcal I}
\]
be a family of real-valued smooth functions. Fix an open neighborhood
$\mathcal O\supset Q$ on which all $\phi_a$ are smooth, and assume
\begin{equation}\label{eq:frame-assumption}
 \phi_0|_Q=1,\qquad
 L_b(\phi):=
 \sum_{a\in\mathcal I}\sum_{|\gamma|\le b}
 \|\partial^\gamma\phi_a\|_{\infty,\mathcal O}<\infty
 \quad\text{for every integer }b\ge0.
\end{equation}
Thus the series formed from uniformly bounded coefficients and the
functions $\phi_a$ may be differentiated term by term to any fixed order.
We will also use
\[
 \sum_{a\in\mathcal I}\|\phi_a\|_{C^s(\mathcal O)}<\infty.
\]
For the families used below, this follows from the corresponding derivative
bounds for compactly supported smooth extensions to $\mathbb R^d$. In
particular, the random functions constructed from this family are smooth,
although the statistical model uses only their $C^s$ bound. We will enlarge
the family $\phi$ once in \zcref{sec:local-construction}.

We next choose bounded random coefficients whose low-order moments agree
with those of independent standard Gaussians. Let
\[
 z=(z_a)_{a\in\mathcal I}
\]
denote formal coefficient variables; these are distinct from the response
variables $u_1,\ldots,u_m$. At this stage we retain coefficient monomials
of total degree at most $2R$, without imposing any restriction such as
$z_a^3=0$.

Choose independent random variables $(B_a)_{a\in\mathcal I}$ with a common
bounded symmetric distribution whose moments agree with those of a standard
Gaussian through degree $4R$. Such a distribution can be obtained from
Gaussian quadrature: take the $2R+1$ roots of the degree-$(2R+1)$ polynomial
orthogonal with respect to the standard Gaussian measure, with the
corresponding quadrature weights. The resulting rule is exact for
polynomials of degree at most $4R+1$. Its weights are positive: if $L_j$
is the Lagrange cardinal polynomial for one quadrature node, exactness
applied to $L_j^2$ gives
\[
 w_j=\int L_j(x)^2\,d\gamma(x)>0,
\]
where $\gamma$ is standard Gaussian measure. Symmetry of the Gaussian
measure gives a symmetric quadrature rule.

Let $b_R<\infty$ bound the support of this coefficient distribution. Then
\[
 F(x)=\sum_{a\in\mathcal I}B_a\phi_a(x)
\]
converges absolutely together with every fixed derivative, by
\eqref{eq:frame-assumption}, and
\[
 \|F\|_{C^s(\mathcal O)}
 \le
 b_R\sum_{a\in\mathcal I}\|\phi_a\|_{C^s(\mathcal O)}.
\]
Thus the bounded coefficient law automatically produces functions with a
uniform $C^s$ bound.

We now record the finite-order Gaussian identities that these bounded
coefficients inherit. We use the probabilists' Hermite polynomials,
normalized by
\[
 e^{tx-t^2/2}
 =
 \sum_{n\ge0}\operatorname{He}_n(x)\frac{t^n}{n!}.
\]
For a finitely supported multi-index $I=(I_a)_{a\in\mathcal I}$, set
\[
 H_I(B)=\prod_{a\in\mathcal I}\operatorname{He}_{I_a}(B_a).
\]
For example, $\operatorname{He}_2(x)=x^2-1$. If $|I|\le2R$, then
moment matching with the Gaussian law gives
\[
 \left[
   \E H_I(B)e^{\langle z,B\rangle}
 \right]_{\deg_z\le2R}
 =
 \left[
   z^I e^{\|z\|_2^2/2}
 \right]_{\deg_z\le2R}.
\]
Indeed, the coefficient of a monomial of degree at most $2R$ on the
left involves moments of $B$ of degree at most
$|I|+2R\le4R$, exactly the range in which the bounded law agrees with
the Gaussian law.

Let $\mathsf R$ denote the joint law of the coefficients $(B_a)$.
For a bounded function $h(B)$, define
\begin{equation}\label{eq:hermite-generating-map}
 \mathcal G(h)(z)
 =
 \left[
 e^{-\|z\|_2^2/2}
 \E_{\mathsf R}\!\left[h(B)e^{\langle z,B\rangle}\right]
 \right]_{\deg_z\le2R}.
\end{equation}
All coefficient calculations in \eqref{eq:hermite-generating-map} are first
performed with only finitely many nonzero $z_a$. When we later substitute
linear combinations of the functions $\phi_a$, the summability in
\eqref{eq:frame-assumption} extends the identities by absolute convergence.

The purpose of the factor $e^{-\|z\|_2^2/2}$ in
\eqref{eq:hermite-generating-map} is to remove the common Gaussian
moment-generating factor. To see what this factor becomes after evaluation
at design points, substitute
\[
 z_a=\sum_{i=1}^m u_i\phi_a(U_i).
\]
The corresponding truncated Gaussian factor is
\begin{equation}\label{eq:gaussian-factor}
 M_{0,m}(\mathbf U;u)
 =
 \mathcal T_{m,R}
 \exp\left\{
   \frac12\sum_{a\in\mathcal I}
   \left(\sum_{i=1}^m u_i\phi_a(U_i)\right)^2
 \right\}.
\end{equation}
Although the actual coefficients $B_a$ are bounded rather than Gaussian,
their moments agree with Gaussian moments to the order used here. It is
this finite-degree identity that allows the factor
\eqref{eq:gaussian-factor} to be removed in the later coefficient
calculations.

We next convert homogeneous polynomials in the coefficient variables into
multilinear functions of spatial locations. Let $P(z)$ be homogeneous of
degree $r$. Define
\begin{equation}\label{eq:external-tensor}
 \Phi_P(T_1,\ldots,T_r)
 =
 \frac1{r!}[x_1\cdots x_r]\,
 P\left(
   \left(\sum_{i=1}^r x_i\phi_a(T_i)\right)_{a\in\mathcal I}
 \right).
\end{equation}
The function $\Phi_P$ is symmetric and multilinear in the $r$ vectors
$\phi(T_i)$. On the diagonal,
\[
 \Phi_P(T,\ldots,T)=P(\phi(T)).
\]
More generally, if
\[
 P\left(
   \left(\sum_i u_i\phi_a(U_i)\right)_{a\in\mathcal I}
 \right)
\]
is differentiated $r$ times with respect to response variables, the
result is $r!$ times $\Phi_P$ evaluated at the corresponding list of
locations, with repetitions when the same response variable is
differentiated more than once. For example, if
\[
 P(z)=\frac{\delta}{2}z_0^2,
\]
then $\phi_0=1$ on $Q$ gives
\[
 \Phi_P(T_1,T_2)=\frac{\delta}{2}.
\]
Accordingly, the coefficient of $u_1^2$ is $\delta/2$, while the second
derivative with respect to $u_1$ is $\delta$. This is the normalization
used later to compensate the change in the variance parameter for a single
observation.

We will realize such multilinear functions by perturbing the bounded
coefficient law with normalized Hermite polynomials. Choose
$h_R\ge1$ large enough that
\[
 \|H_I\|_\infty\le h_R
\]
under the bounded coefficient law for every finitely supported multi-index
$I$ of positive even degree $|I|\le2R$. This bound does not depend on the
size of $\mathcal I$, because such an $I$ involves at most $2R$
coordinates. Define
\[
 h_I=\frac{H_I}{h_R}.
\]
Then
\[
 \E_{\mathsf R}h_I(B)=0,
 \qquad
 \|h_I\|_\infty\le1,
\]
and $h_I(-B)=h_I(B)$ because $|I|$ is even. Moreover,
\eqref{eq:hermite-generating-map} and the Hermite identity above give
\[
 \mathcal G(h_I)(z)=\frac{z^I}{h_R}.
\]
We call
\[
 \Phi_{\mathcal G(h_I)}
\]
the \emph{normalized response atom} associated with $I$. Thus each such
atom is produced by a bounded, mean-zero, sign-symmetric perturbation of
the coefficient law, and its generating polynomial is homogeneous of
degree $|I|$. When we use the unnormalized polynomial $H_I$ instead, the
fixed factor $h_R$ is included in the representation bounds below.

Finally, we specify how interpolation kernels will be represented and how
we measure the size of such a representation. Suppose
$E(\mathbf U)$ is a matrix indexed by
$\mathcal I\times\mathcal I$. A \emph{separated coefficient-kernel
representation} of $E$ consists of a sigma-finite measure space
$(\mathcal Z,\sigma)$ on a standard Borel parameter space, measurable
functions
\[
 a_i(\,\cdot\,;\zeta):Q\to\mathbb R,
 \qquad |a_i(U_i;\zeta)|\le1,
\]
and real matrices
\[
 A(\zeta)=(A_{ab}(\zeta))_{a,b\in\mathcal I},
\]
such that
\begin{equation}\label{eq:coefficient-kernel-cost}
 E(\mathbf U)
 =
 \int_{\mathcal Z}
 \left(\prod_{i=1}^m a_i(U_i;\zeta)\right)
 A(\zeta)\,d\sigma(\zeta)
\end{equation}
and
\[
 \operatorname{cost}(E)
 :=
 \int_{\mathcal Z}
 \sum_{a,b\in\mathcal I}|A_{ab}(\zeta)|\,d\sigma(\zeta)
 <\infty.
\]
Thus each design location appears through its own bounded factor, while
the matrix $A(\zeta)$ records the coefficients of the functions
$\phi_a(t)\phi_b(t')$. Any signs, as well as the density of a finite signed
measure on the parameter space, are absorbed into $A(\zeta)$. The factors
$a_i$ are not required to have mean zero. Absolute summability of the
matrix coefficients ensures that both the integral above and its evaluation
against the functions $\phi_a$ are well defined. The quantity
$\operatorname{cost}(E)$ is the representation cost used in the
interpolation constructions below.

We now introduce a second representation, adapted to the later construction
of positive priors. Fix one normalized design density $p_\omega$, indexed by
$\omega$, and fix an even response degree $r\le2R$. Suppose a kernel
$K(\mathbf U;T)$ depends on $q$ design locations
$\mathbf U=(U_1,\ldots,U_q)$ and on the fixed response locations
$T=(T_1,\ldots,T_r)$. We represent it in the form
\begin{equation}\label{eq:general-response-representation}
 \begin{aligned}
 K(\mathbf U;T)
 &=
 \int_{\mathcal Z}
 \lambda(\omega,\zeta)
 \prod_{i=1}^q a_{\omega,\zeta,i}(U_i)
 \Phi_{\omega,\zeta}(T)\,d\sigma(\zeta),\\
 \operatorname{cost}(K)
 &=
 \int_{\mathcal Z}
 |\lambda(\omega,\zeta)|\,d\sigma(\zeta)<\infty.
 \end{aligned}
\end{equation}
Here each $a_{\omega,\zeta,i}:Q\to\mathbb R$ is measurable and satisfies
\[
 \|a_{\omega,\zeta,i}\|_\infty\le1,
\]
and each $\Phi_{\omega,\zeta}$ is a normalized response atom of degree $r$
in the sense defined above. Thus the dependence on the design points is
carried entirely by the product of the factors
$a_{\omega,\zeta,i}(U_i)$, while the dependence on the response locations
is carried entirely by $\Phi_{\omega,\zeta}(T)$.

At this stage we do not require the design factors to have mean zero, and
we do not require the representation to be invariant under permutations of
the design labels. We also regard the particular decomposition in
\eqref{eq:general-response-representation} as part of the data: the
coefficient $\lambda$, the design factors, and the normalized score used to
produce $\Phi_{\omega,\zeta}$ will later be modified separately when we
construct positive probability laws. Thus two different representations of
the same function $K$ need not be interchangeable for that construction.
If several even response degrees are present, we keep their representations
separate.

For the synthesis lemma below, we will use a more restrictive form in which
the design factors have mean zero and the dependence on the $q$ design
locations is symmetric. Write
\begin{equation}\label{eq:local-separated}
 H_q(\mathbf U;T)
 =
 \int_{\mathcal Z}
 \lambda(\omega,\zeta)
 \Sym\left(
  \prod_{i=1}^q a_{\omega,\zeta,i}(U_i)
\right)
 \Phi_{\omega,\zeta}(T)\,d\sigma(\zeta),
\end{equation}
where
\[
 \int_Q a_{\omega,\zeta,i}(u)\,du=0
\]
for every $i$, and where
\[
 \Sym\left(\prod_{i=1}^q a_i(U_i)\right)
 =
 \frac1{q!}\sum_{\pi\in S_q}
 \prod_{i=1}^q a_i(U_{\pi(i)}).
\]
The cost of this representation is again
\[
 \int_{\mathcal Z}|\lambda(\omega,\zeta)|\,d\sigma(\zeta).
\]

We next explain how to obtain \eqref{eq:local-separated} from
\eqref{eq:general-response-representation}. Suppose first that $K$ has
zero integral in each design variable separately. For a function of
$\mathbf U$, let
\[
 C_iK(\mathbf U;T)
 =
 K(\mathbf U;T)
 -
 \int_Q K(U_1,\ldots,U_{i-1},u,U_{i+1},\ldots,U_q;T)\,du
\]
denote centering in the $i$th design variable. By assumption,
$C_iK=K$ for every $i$, and hence
\[
 C_1\cdots C_qK=K.
\]
Applying these centering operators to the representation
\eqref{eq:general-response-representation} simply replaces each factor
$a_i$ by
\[
 a_i^\circ
 =
 a_i-\int_Q a_i(u)\,du.
\]
Thus the centered factors give another representation of the same kernel
$K$.  Since
$\|a_i^\circ\|_\infty\le2$, write
\[
 a_i^\circ=2\widetilde a_i,
 \qquad
 \|\widetilde a_i\|_\infty\le1,
 \qquad
 \int_Q\widetilde a_i=0.
\]
Absorbing the resulting factor $2^q$ into $\lambda$ increases the
representation cost by at most $2^q$. If $K$ is also symmetric in its
$q$ design arguments, averaging the resulting representation over all
permutations of the design labels leaves $K$ unchanged and does not
increase the cost. We will use the unrestricted representation
\eqref{eq:general-response-representation} while constructing the required
kernels and the centered symmetric form \eqref{eq:local-separated} when
applying the synthesis lemma.

The next lemma provides the scalar coefficients needed to isolate terms
containing exactly $q$ density perturbations. Indeed, if a density factor is
written as $1+t\,a(U)$, then the expansion of a product of such factors
contains powers $t^l$, where $l$ records how many perturbation factors have
been selected. The signed measure below annihilates every power
$t^l$ with $0\le l\le M$ except $t^q$.

\begin{lemma}\label{lem:local-scalar}
Let $M\ge2$ and $0<\eta<1$. For every $q\in\{2,\ldots,M\}$, there exists
a finite signed measure $\nu_{q,M}$ on $[-\eta,\eta]$ such that
\[
 \int_{-\eta}^{\eta} t^l\,d\nu_{q,M}(t)
 =
 \mathbf1_{\{l=q\}},
 \qquad
 0\le l\le M.
\]
Moreover,
\[
 \|\nu_{q,M}\|_{\TV}
 \le I_M,
 \qquad
 I_M\le C\left(\frac{3}{\eta}\right)^M,
\]
where $C$ is an absolute constant independent of $M,q$, and $\eta$.
\end{lemma}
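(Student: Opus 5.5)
We need a signed measure on $[-\eta,\eta]$ whose moments of orders $0,\dots,M$ vanish except for order $q$, where the moment is $1$, with total variation at most $C(3/\eta)^M$. The plan is to take $\nu_{q,M}$ to be a discrete measure supported on $M+1$ Chebyshev nodes of $[-\eta,\eta]$, with weights obtained by solving the Vandermonde system, and then to bound the weights through the Chebyshev expansion of the monomials.

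By scaling it suffices to treat $\eta=1$. Indeed, if $\mu$ on $[-1,1]$ satisfies $\int s^l\,d\mu(s)=\mathbf 1_{\{l=q\}}$, then its pushforward under $s\mapsto\eta s$, multiplied by $\eta^{-q}$, has moments $\eta^{l-q}\mathbf 1_{\{l=q\}}=\mathbf 1_{\{l=q\}}$. Its total variation is $\eta^{-q}\|\mu\|_{\TV}\le\eta^{-M}\|\mu\|_{\TV}$, since $\eta<1$ and $q\le M$. We therefore need a measure $\mu$ on $[-1,1]$ with $\|\mu\|_{\TV}\le C3^M$.

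On $[-1,1]$, take the Chebyshev extrema $s_j=\cos(j\pi/M)$, $j=0,\dots,M$, and define $\mu=\sum_j w_j\delta_{s_j}$. We want $\sum_j w_jP(s_j)=c_q(P)$, the coefficient of $s^q$ in $P$, for every $P$ of degree at most $M$. Since the nodes are distinct, this linear functional on polynomials of degree at most $M$ is uniquely represented by such weights, through Lagrange interpolation: $w_j=c_q(\ell_j)$, where $\ell_j$ is the Lagrange cardinal polynomial at $s_j$. Then $\|\mu\|_{\TV}=\sum_j|c_q(\ell_j)|$.

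To bound this sum, expand each $\ell_j$ in the Chebyshev basis. For the extrema nodes, discrete orthogonality gives $\ell_j=\sum_{k\le M}\beta_{jk}T_k$ with $\sum_j|\beta_{jk}|\le 2$ for every $k$. The weights are explicit, $\beta_{jk}=\frac{2}{M}\cdot\frac{T_k(s_j)}{\gamma_j\gamma_k}$ with halving at the endpoints, and $\sum_j|\beta_{jk}|\le\frac{2}{M}\sum_j\frac{1}{\gamma_j\gamma_k}\le 2$ up to the endpoint factor. The coefficient of $s^q$ in $T_k$ is at most $(1+\sqrt2)^k$ in absolute value. This follows because the sum of the absolute values of the coefficients of $T_k$ equals $|T_k(i)|=\tfrac12\bigl((1+\sqrt2)^k+(\sqrt2-1)^k\bigr)$. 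Summing, we get
\[
\|\mu\|_{\TV}\le\sum_{k\le M}\sum_j|\beta_{jk}|(1+\sqrt2)^k\le C(1+\sqrt2)^M\le C\,3^M .
\]
Combined with the scaling, this gives $\|\nu_{q,M}\|_{\TV}\le C(3/\eta)^M$.

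The only delicate step is the uniform bound on $\sum_j|\beta_{jk}|$, i.e.\ the discrete orthogonality computation for Chebyshev–Lobatto nodes. A cruder but robust alternative avoids it: write the weight vector as $w=V^{-T}e_q$ for the Vandermonde matrix $V$ on the Chebyshev nodes, and use the known bound that the row sums of $V^{-1}$ grow like $(1+\sqrt2)^M$ times a polynomial in $M$. That polynomial factor is absorbed, since $(1+\sqrt2)^M M^{O(1)}\le C3^M$.
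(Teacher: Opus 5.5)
Your proof is correct and follows essentially the same route as the paper: interpolate at Chebyshev nodes, use discrete orthogonality to write the Lagrange functionals in the Chebyshev basis with uniformly bounded $\ell^1$ weights, and extract $[x^q]$ via an $\ell^1$ bound on the coefficients of $T_r$. The differences are cosmetic: the paper uses the Gauss nodes $\eta\cos((j+\tfrac12)\pi/(M+1))$ and the cruder bound $\|T_r\|_{\ell^1}\le 3^r$ from the recurrence, whereas you use Lobatto nodes and $(1+\sqrt2)^r$. One harmless slip: the coefficient $\ell^1$ norm of $T_k$ is $\tfrac12\bigl((1+\sqrt2)^k+(1-\sqrt2)^k\bigr)$, not $\tfrac12\bigl((1+\sqrt2)^k+(\sqrt2-1)^k\bigr)$, but either expression is at most $(1+\sqrt2)^k$, so your bound stands.
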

\begin{proof}
Let $J=M+1$ and define
\[
 \theta_j=\frac{(j+1/2)\pi}{J},
 \qquad
 t_j=\eta\cos\theta_j,
 \qquad
 0\le j\le J-1.
\]
These are the $J$ Chebyshev nodes on $[-\eta,\eta]$. We use them to
construct a signed measure whose action on every polynomial of degree at
most $M$ extracts the coefficient of $x^q$.

Let $T_r$ denote the Chebyshev polynomial of degree $r$. The discrete
orthogonality relations
\[
 \frac1J\sum_{j=0}^{J-1}\cos(r\theta_j)=0,
 \qquad 1\le r\le M,
\]
and
\[
 \frac2J\sum_{j=0}^{J-1}
 \cos(r\theta_j)\cos(l\theta_j)
 =
 \mathbf1_{\{r=l\}},
 \qquad 1\le r,l\le M,
\]
give, for every polynomial $P$ of degree at most $M$,
\[
 P(x)
 =
 \frac1J\sum_{j=0}^{J-1}P(t_j)
 +
 \frac2J\sum_{r=1}^M
 T_r(x/\eta)
 \sum_{j=0}^{J-1}P(t_j)\cos(r\theta_j).
\]
Indeed, this is the Chebyshev expansion of $P$ recovered from its values
at the $J$ nodes.

For $2\le q\le M$, define
\[
 \nu_{q,M}
 =
 \frac1J\sum_{j=0}^{J-1}
 \left(
  2\sum_{r=1}^M
  [x^q]T_r(x/\eta)\cos(r\theta_j)
 \right)\delta_{t_j}.
\]
Applying the coefficient operator $[x^q]$ to the interpolation formula
above gives, for every polynomial $P$ of degree at most $M$,
\[
 \int P(t)\,d\nu_{q,M}(t)
 =
 [x^q]P(x).
\]
Taking $P(x)=x^l$, $0\le l\le M$, therefore yields
\[
 \int t^l\,d\nu_{q,M}(t)
 =
 \mathbf1_{\{l=q\}},
\]
which proves the required moment identities.

It remains to bound the total variation. Let $\|P\|_{\ell^1}$ denote the
sum of the absolute values of the coefficients of a polynomial $P$.
The recurrence
\[
 T_{r+1}(x)=2xT_r(x)-T_{r-1}(x),
 \qquad
 T_0(x)=1,\quad T_1(x)=x,
\]
implies inductively that
\[
 \|T_r\|_{\ell^1}\le3^r.
\]
Hence
\[
 \bigl|[x^q]T_r(x/\eta)\bigr|
 =
 \eta^{-q}\bigl|[x^q]T_r(x)\bigr|
 \le
 \eta^{-q}3^r.
\]
Using $|\cos(r\theta_j)|\le1$ in the definition of $\nu_{q,M}$ gives
\begin{align*}
 \|\nu_{q,M}\|_{\TV}
 &\le
 2\sum_{r=1}^M
 \bigl|[x^q]T_r(x/\eta)\bigr|\\
 &\le
 2\eta^{-q}\sum_{r=1}^M3^r
 \le
 C\left(\frac3\eta\right)^M,
\end{align*}
where we used $q\le M$ and $0<\eta<1$ in the last inequality. Thus the
same bound
\[
 I_M=C(3/\eta)^M
\]
works for every $q=2,\ldots,M$.
\end{proof}

The next lemma converts the centered symmetric representations above into
signed measures on actual probability densities. There are two steps. First,
a symmetric product of possibly different centered design factors can be
written as a signed average of products in which the same centered factor is
used at every design location. Second, \zcref{lem:local-scalar} chooses the
scalar perturbation size so that, simultaneously for every observation count
$m\le M$, only products involving exactly $q$ density perturbations remain.

\begin{lemma}\label{lem:local-synthesis}
Fix an even response degree $r$ with $2\le r\le2R$. For
$2\le q\le M$, let
\[
 H_q(U_1,\ldots,U_q;T)
\]
be symmetric in the design variables $U_1,\ldots,U_q$, where
$T=(T_1,\ldots,T_r)$ denotes the fixed response locations. Assume that
$H_q$ has zero integral in each design variable separately:
\[
 \int_Q
 H_q(U_1,\ldots,U_{i-1},u,U_{i+1},\ldots,U_q;T)\,du
 =0,
 \qquad 1\le i\le q.
\]
Suppose further that each $H_q$ has a specified representation of the form
\eqref{eq:local-separated}, with representation cost $A_q$.

Then there exists a finite signed measure $\Lambda$ on pairs $(p,\Phi)$
with the following properties. For every pair in its support,
$\Phi$ is a normalized response atom of degree $r$ and
\[
 p(u)=1+t\,a(u),
 \qquad
 |t|\le\eta,
 \qquad
 \int_Q a(u)\,du=0,
 \qquad
 \|a\|_\infty\le1.
\]
In particular, since $0<\eta<1$,
\[
 \int_Q p(u)\,du=1,
 \qquad
 p(u)\ge1-\eta>0,
\]
so every $p$ is a probability density on $Q$.

For every $0\le m\le M$, the same signed measure $\Lambda$ satisfies
\begin{equation}\label{eq:count-kernel}
 \int
 \left(\prod_{i=1}^m p(U_i)\right)\Phi(T)
 \,d\Lambda(p,\Phi)
 =
 \sum_{\substack{A\subseteq[m]\\ |A|\ge2}}
 H_{|A|}(U_A;T).
\end{equation}
Here the product on the left is equal to one when $m=0$, and the sum on
the right is empty when $m=0$ or $m=1$. Moreover,
\[
 \|\Lambda\|_{\TV}
 \le
 I_M\sum_{q=2}^M e^qA_q.
\]
\end{lemma}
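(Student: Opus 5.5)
The plan is to build $\Lambda$ separately for each $q\in\{2,\ldots,M\}$ and then add the pieces. Each piece has two ingredients: a polarization identity, which turns the symmetrized product of different centered factors into an average of $q$-fold powers of a single centered factor, and the scalar measure $\nu_{q,M}$ of \zcref{lem:local-scalar}, which isolates the terms with exactly $q$ density perturbations.

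\emph{Step 1 (polarization).} Fix $q$ and a parameter $\zeta$ in the representation \eqref{eq:local-separated} of $H_q$, with centered factors $a_1,\ldots,a_q$ (suppressing $\omega,\zeta$). I will use the identity
\[
 \Sym\Bigl(\prod_{i=1}^q a_i(U_i)\Bigr)
 =\frac{1}{2^q q!}\sum_{\varepsilon\in\{\pm1\}^q}
 \Bigl(\prod_{i=1}^q\varepsilon_i\Bigr)
 \prod_{j=1}^q\Bigl(\sum_{i=1}^q\varepsilon_i a_i(U_j)\Bigr).
\]
It holds because expanding the right side and summing over signs kills every term in which some $a_i$ appears an even number of times. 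Only the terms using each $a_i$ exactly once survive, and these reproduce the symmetrization.

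Now set $b_\varepsilon=q^{-1}\sum_i\varepsilon_i a_i$. Then $\|b_\varepsilon\|_\infty\le1$ and $\int_Q b_\varepsilon=0$, and
\[
 \Sym\Bigl(\prod_i a_i(U_i)\Bigr)=\frac{q^q}{2^q q!}\sum_\varepsilon\Bigl(\prod_i\varepsilon_i\Bigr)\prod_{j=1}^q b_\varepsilon(U_j).
\]
Since there are $2^q$ sign vectors, the total weight is $q^q/q!\le e^q$.

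\emph{Step 2 (isolating degree $q$).} For each $(\zeta,\varepsilon)$ and each $t\in[-\eta,\eta]$, take the density $p=1+t\,b_\varepsilon$, paired with the response atom $\Phi_\zeta$. This $p$ has exactly the form required in the statement. For any $m\le M$,
\[
 \prod_{i=1}^m p(U_i)=\sum_{A\subseteq[m]}t^{|A|}\prod_{i\in A}b_\varepsilon(U_i).
\]
Integrating $t$ against $\nu_{q,M}$ retains only the subsets with $|A|=q$, because $|A|\le m\le M$. The result is $\sum_{|A|=q}\prod_{i\in A}b_\varepsilon(U_i)$. Define
\[
 \Lambda_q=\int_{\mathcal Z}\lambda(\zeta)\,\frac{q^q}{2^qq!}\sum_\varepsilon\Bigl(\prod_i\varepsilon_i\Bigr)\,\bigl(\text{pushforward of }\nu_{q,M}(dt)\text{ under }t\mapsto(1+t\,b_{\varepsilon,\zeta},\Phi_\zeta)\bigr)\,d\sigma(\zeta),
\]
and set $\Lambda=\sum_{q=2}^M\Lambda_q$.

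\emph{Step 3 (verifying \eqref{eq:count-kernel}).} By Steps 1 and 2 and Fubini, $\Lambda_q$ contributes $\sum_{|A|=q}H_q(U_A;T)$. Here I use that $\prod_{i\in A}b_\varepsilon(U_i)$ is symmetric in $U_A$, so the polarization identity applies to each subset, and that $H_q$ is symmetric. Summing over $q$ gives the right side of \eqref{eq:count-kernel}. For $m\le1$ no subset of size $\ge2$ exists, and each $\nu_{q,M}$ annihilates $t^0$ and $t^1$, so both sides vanish.

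\emph{Step 4 (total variation).} The total variation bound is immediate from the construction:
\[
 \|\Lambda_q\|_{\TV}\le e^q\,\|\nu_{q,M}\|_{\TV}\int|\lambda|\,d\sigma\le I_M e^qA_q.
\]

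\emph{Main obstacle.} I expect the main difficulty to be technical rather than conceptual: checking that the parametrized pushforward defines a genuine finite signed measure on a standard Borel space of pairs $(p,\Phi)$, and that Fubini is justified. I plan to handle this by realizing $\Lambda$ as the image of the finite signed measure $|\lambda|\,d\sigma\otimes(\text{counting on }\varepsilon)\otimes\nu_{q,M}$, carrying the density $\operatorname{sgn}\lambda\cdot\prod_i\varepsilon_i$, under a measurable map. The uniform bounds $|p|\le2$ and $\|\Phi\|_\infty\le C$ then give absolute integrability.
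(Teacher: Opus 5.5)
Your proposal is correct and follows essentially the same route as the paper: a sign-polarization identity rewrites $\Sym\bigl(\prod_i a_i(U_i)\bigr)$ as a signed average of $q$-fold products of the single centered function $q^{-1}\sum_i\varepsilon_i a_i$, with weight $q^q/q!\le e^q$, and then $\prod_{i}(1+t\,b_\varepsilon(U_i))$ is integrated against $\nu_{q,M}$, with the pieces summed over $q$. Your explicit sum $2^{-q}\sum_{\varepsilon}$ is just the paper's $\E_\varepsilon$, and your measurability and Fubini remarks fill in a detail the paper leaves implicit.
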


\begin{proof}
We first reduce each centered symmetric design factor in
\eqref{eq:local-separated} to products of a single centered function.
Suppress $\omega,\zeta$ from the notation and consider
\[
 \Sym\left(\prod_{i=1}^q a_i(U_i)\right),
\]
where
\[
 \|a_i\|_\infty\le1,
 \qquad
 \int_Qa_i(u)\,du=0.
\]
Let $\varepsilon_1,\ldots,\varepsilon_q$ be independent random signs,
each taking the values $\pm1$ with probability $1/2$, and define
\[
 a_\varepsilon(u)
 =
 \frac1q\sum_{i=1}^q\varepsilon_i a_i(u).
\]
Then
\[
 \|a_\varepsilon\|_\infty\le1,
 \qquad
 \int_Qa_\varepsilon(u)\,du=0.
\]
Moreover,
\begin{equation}\label{eq:polarization-product}
 \Sym\left(\prod_{i=1}^q a_i(U_i)\right)
 =
 \frac{q^q}{q!}
 \E_\varepsilon\left[
   \left(\prod_{i=1}^q\varepsilon_i\right)
   \prod_{j=1}^q a_\varepsilon(U_j)
 \right].
\end{equation}
To verify this identity, expand the product on the right. A term survives
the expectation over the signs only if every $\varepsilon_i$ occurs an even
number of times after multiplication by $\prod_i\varepsilon_i$. Since
there are exactly $q$ factors $a_\varepsilon(U_j)$, this happens precisely
when each index $i$ is chosen once. The surviving terms are therefore the
$q!$ permutations appearing in the symmetrization. Since
\[
 \frac{q^q}{q!}\le e^q,
\]
this reduction increases the representation cost by at most a factor $e^q$.

It remains to convert a product
\[
 \prod_{i=1}^q a(U_i)
\]
of one centered bounded function into density perturbations that work
simultaneously for all counts $m\le M$. For fixed $q$, let
$\nu_{q,M}$ be the signed measure supplied by
\zcref{lem:local-scalar}. Expanding
\[
 \prod_{i=1}^m(1+t\,a(U_i))
 =
 \sum_{A\subseteq[m]}
 t^{|A|}\prod_{i\in A}a(U_i)
\]
and integrating with respect to $\nu_{q,M}$ gives
\[
 \int
 \prod_{i=1}^m(1+t\,a(U_i))
 \,d\nu_{q,M}(t)
 =
 \sum_{\substack{A\subseteq[m]\\ |A|=q}}
 \prod_{i\in A}a(U_i),
 \qquad 0\le m\le M,
\]
because
\[
 \int t^l\,d\nu_{q,M}(t)=\mathbf1_{\{l=q\}}
 \qquad (0\le l\le M).
\]
Each resulting factor
\[
 p(u)=1+t\,a(u)
\]
is a probability density because $\int_Qa=0$, $|t|\le\eta<1$, and
$\|a\|_\infty\le1$.

Apply this construction to every term in the specified representation of
each $H_q$, keeping its normalized response atom $\Phi$ unchanged, and then
sum over $q=2,\ldots,M$. For a fixed count $m$, the contribution from the
$q$th representation is
\[
 \sum_{\substack{A\subseteq[m]\\ |A|=q}}
 H_q(U_A;T).
\]
Summing over $q$ gives exactly \eqref{eq:count-kernel}, and the same signed
measure works for every $m\le M$.

Finally, \eqref{eq:polarization-product} costs at most a factor $e^q$, while
\zcref{lem:local-scalar} gives
$\|\nu_{q,M}\|_{\TV}\le I_M$. Therefore
\[
 \|\Lambda\|_{\TV}
 \le
 I_M\sum_{q=2}^M e^qA_q,
\]
as claimed.
\end{proof}

\subsection{Construction of the local priors}\label{sec:local-construction}

We now specify the moment differences that the two local priors must realize.
For each $k\ge2$, let $E_{k,N}(\mathbf U)$ be one of the interpolation
matrices constructed in the smooth family $\phi$. Thus, for
$\mathbf U=(U_1,\ldots,U_k)\in Q^k$,
\[
 \phi(U_i)^T E_{k,N}(\mathbf U)\phi(U_\ell)
 =
 w_{k,i}(\mathbf U)\mathbf1_{\{i=\ell\}},
 \qquad
 0\le w_{k,i}(\mathbf U)\le1.
\]
Assume that $E_{k,N}$ has a separated coefficient-kernel representation
of cost at most $A_k$ in the sense of
\eqref{eq:coefficient-kernel-cost}, and that
\[
 \int_{Q^k}
 \sum_{i=1}^k\bigl(1-w_{k,i}(\mathbf U)\bigr)\,d\mathbf U
 \le B_k.
\]
The numbers $A_k$ and $B_k$ therefore measure, respectively, the size of
the representation and the integrated error in replacing the desired
diagonal value $1$ by $w_{k,i}$.

We allow the construction to start from a finite positive measure $\mu$
on normalized design densities having bounded representatives with a
common bound $\|p\|_\infty\le P<\infty$. In the eventual construction
$|p-1|\le\eta$, so $P=1+\eta$ suffices. Define
\[
 \rho_k(U_1,\ldots,U_k)
 =
 \int \prod_{i=1}^k p(U_i)\,d\mu(p),
 \qquad
 \rho_0=\mu(\mathcal P),
\]
where $\mathcal P$ denotes the set of densities on which $\mu$ is
supported. Since every $p$ integrates to one,
\[
 \int_Q
 \rho_k(U_1,\ldots,U_k)\,dU_i
 =
 \rho_{k-1}(U_1,\ldots,U_{i-1},U_{i+1},\ldots,U_k).
\]
Also,
\[
 \rho_0=\|\mu\|_{\TV},
\]
because $\mu$ is positive. When $\mu$ is a point mass at a single
density $p$, we refer below to a per-density estimate. The corresponding
estimate for a general positive measure $\mu$ follows by integration and
is multiplied by at most $\|\mu\|_{\TV}$.

Fix $0<\delta\le1$. We next derive the correction required to compensate
for the difference between the two variance parameters. Recall the common
factor $M_{0,k}(\mathbf U;u)$ from \eqref{eq:gaussian-factor}. We seek two
retained density-weighted generating series which, before the variance
shift is inserted, have the form
\[
 M_{0,k}(\mathbf U;u)
 \left(
   \rho_k(\mathbf U)\pm\frac12L_k(\mathbf U;u)
 \right),
\]
where $L_k$ is to be chosen. The two variance values
$v\mp\kappa^2\delta/2$ contribute the opposite formal factors
\[
 e^{\mp A_0},
 \qquad
 A_0=\frac{\delta}{4}\sum_{i=1}^k u_i^2.
\]
Hence the difference between the two compensated retained series is
\begin{align*}
 D_k
 &=
 \mathcal T_{k,R}\Bigl\{
 M_{0,k}
 \bigl[
 e^{-A_0}(\rho_k+L_k/2)
 -
 e^{A_0}(\rho_k-L_k/2)
 \bigr]
 \Bigr\}\\
 &=
 \mathcal T_{k,R}\Bigl\{
 M_{0,k}
 \bigl[
 L_k\cosh(A_0)-2\rho_k\sinh(A_0)
 \bigr]
 \Bigr\}\\
 &=
 \mathcal T_{k,R}\Bigl\{
 M_{0,k}\cosh(A_0)
 \bigl[
 L_k-2\rho_k\tanh(A_0)
 \bigr]
 \Bigr\}.
\end{align*}
Thus
\begin{equation}\label{eq:compensation-identity}
 D_k
 =
 \mathcal T_{k,R}\Bigl\{
 M_{0,k}\cosh(A_0)
 \bigl[L_k-2\rho_k\tanh(A_0)\bigr]
 \Bigr\}.
\end{equation}
If we could choose
\[
 L_k
 =
 \mathcal T_{k,R}
 \bigl[2\rho_k\tanh(A_0)\bigr],
\]
then all retained coefficients would cancel exactly. The interpolation
construction provides an approximation to this ideal correction.

To describe it, for formal coefficient variables
$z=(z_a)_{a\in\mathcal I}$ define
\[
 z(\mathbf U;u)_a
 =
 \sum_{i=1}^k u_i\phi_a(U_i).
\]
The defining property of $E_{k,N}$ gives
\begin{align*}
 z(\mathbf U;u)^T E_{k,N}(\mathbf U)z(\mathbf U;u)
 &=
 \sum_{i,\ell=1}^k
 u_i u_\ell\,
 \phi(U_i)^T E_{k,N}(\mathbf U)\phi(U_\ell)\\
 &=
 \sum_{i=1}^k w_{k,i}(\mathbf U)u_i^2.
\end{align*}
Thus the interpolation matrix replaces the ideal quadratic form
$\sum_i u_i^2$ by the diagonal approximation
$\sum_i w_{k,i}u_i^2$ without introducing mixed terms $u_i u_\ell$.

Write
\begin{equation}\label{eq:tanh-series}
 \tanh x
 =
 \sum_{\substack{j\ge1\\ j\ {\rm odd}}} t_jx^j,
 \qquad
 t_j=[x^j]\tanh x.
\end{equation}
Because the argument of $\tanh$ is quadratic, the term indexed by $j$
has total response degree $2j$. Only odd $j\le R$ can therefore
contribute to the degree-$2R$ truncation.

For $k\ge2$ and odd $j\le R$, define the homogeneous coefficient
polynomial
\begin{equation}\label{eq:local-target}
 \begin{aligned}
 g_{k,j}(\mathbf U;z)
 &=
 \left[
 2\rho_k(\mathbf U)
 \tanh\!\left(
   \frac{\delta}{4}
   z^T E_{k,N}(\mathbf U)z
 \right)
 \right]_{\deg_z=2j}\\
 &=
 2\rho_k(\mathbf U)t_j
 \left(\frac{\delta}{4}\right)^j
 \bigl(z^TE_{k,N}(\mathbf U)z\bigr)^j.
 \end{aligned}
\end{equation}
This is the degree-$2j$ part of the correction obtained by replacing the
ideal quadratic form in the preceding display by its interpolated version.

We now convert this coefficient polynomial into a function of
$2j$ fixed response locations. Put $r=2j$ and define
\[
 \begin{aligned}
 G_{k,j}(\mathbf U;T_1,\ldots,T_r)
 &=
 \frac1{r!}[x_1\cdots x_r]\,
 g_{k,j}\!\left(
   \mathbf U;
   \left(
     \sum_{\ell=1}^r x_\ell\phi_a(T_\ell)
   \right)_{a\in\mathcal I}
 \right).
 \end{aligned}
\]
Thus $G_{k,j}$ is the polarized response function associated with
$g_{k,j}$. The variables $T_1,\ldots,T_r$ remain fixed while we impose
consistency in the design variables $\mathbf U$. They are identified
with actual design locations only after those consistency relations have
been established. The factor $1/r!$ is deliberate: $G_{k,j}$ is the
polarized coefficient itself, whereas an $r$-fold derivative of the
corresponding response polynomial is $r!$ times this quantity.

When the response polynomial is finally evaluated at a configuration
$\mathbf U$, we substitute
\[
 z_a=\sum_{i=1}^m u_i\phi_a(U_i).
\]
By the calculation above, this turns the quadratic form in
\eqref{eq:local-target} into
\[
 \sum_{i=1}^m w_{k,i}u_i^2
\]
on the relevant $k$ design arguments and hence produces the desired
approximation to the variance-compensating terms.

We also record the representation cost of these targets. If
$E_{k,N}$ has coefficient-kernel representation cost at most $A_k$,
then taking the $j$th power of its separated representation, expanding
the resulting degree-$2j$ coefficient polynomial, and converting its
monomials to the normalized response atoms defined above gives a
representation of $G_{k,j}$ of the form
\eqref{eq:general-response-representation} with cost at most
\[
 \operatorname{cost}(G_{k,j})
 \le
 C_R^k\delta^jA_k^j
 \int\|p\|_\infty^k\,d\mu(p)
 \le
 \|\mu\|_{\TV}(C_RP)^k\delta^jA_k^j.
\]
Here $C_R$ depends only on the fixed response-degree cutoff and on the
bounded coefficient law, not on $N$. For a per-density estimate,
$\mu$ is a unit point mass; a fixed common density bound $P$ can then
be absorbed into the exponential constant.

The next proposition converts these prescribed response kernels into two
genuine probability laws. Its proof has three tasks: it first modifies
the kernels so that their requirements are consistent when design variables
are integrated out, then uses \zcref{lem:local-synthesis} to realize the
resulting corrections by signed measures on normalized densities, and
finally absorbs those signed measures into positive priors while controlling
the remaining coefficient error.

\begin{proposition}\label{prop:local-construction}
Fix $s>0$, integers $M,N\ge2$ and $R\ge1$, and a constant
$0<\eta<1$ small enough that every density satisfying
$|p-1|\le\eta$ lies within the prescribed lower and upper bounds for
the design density. Let $\phi=(\phi_a)_{a\in\mathcal I}$ satisfy
\eqref{eq:frame-assumption} and
\[
 \sum_{a\in\mathcal I}\|\phi_a\|_{C^s(\mathcal O)}<\infty.
\]

For each $2\le k\le M$, suppose that
$E_{k,N}(\mathbf U)$ is a real symmetric coefficient matrix, symmetric
under permutations of the design arguments
$\mathbf U=(U_1,\ldots,U_k)$, and satisfying
\[
 \phi(U_i)^T E_{k,N}(\mathbf U)\phi(U_\ell)
 =
 w_{k,i}(\mathbf U)\mathbf1_{\{i=\ell\}},
 \qquad
 0\le w_{k,i}(\mathbf U)\le1.
\]
Assume that $E_{k,N}$ admits a separated coefficient-kernel
representation in the sense of \eqref{eq:coefficient-kernel-cost} with
cost at most $A_k$, and that its integrated diagonal defect satisfies
\[
 \int_{Q^k}
 \sum_{i=1}^k
 \bigl(1-w_{k,i}(\mathbf U)\bigr)\,d\mathbf U
 \le B_k.
\]
Set
\[
 A_*=\max\{1,A_2,\ldots,A_M\},
 \qquad
 B_*=\max_{2\le k\le M}B_k.
\]

There are constants $C_{M,R}<\infty$ and $0<\delta_0\le1$ with the
following property. If $0<\delta\le\delta_0$ and
\begin{equation}\label{eq:local-budget}
 S
 =
 C_{M,R}
 \sum_{\substack{1\le j\le R\\ j\ {\rm odd}}}
 (\delta A_*)^j
 <\frac12,
\end{equation}
then there exist two probability laws $\Pi_+$ and $\Pi_-$ on pairs
$(p,F)$ with the following properties.

Under either law, $p$ is a probability density on $Q$ satisfying
\[
 \int_Qp(x)\,dx=1,
 \qquad
 |p(x)-1|\le\eta
 \quad\text{for a.e. }x\in Q,
\]
and the random density $p$ has the same distribution under $\Pi_+$
and $\Pi_-$. There is a deterministic constant $L<\infty$ such that,
under either law, $F$ almost surely extends to $\mathcal O$ and
satisfies
\[
 \|F\|_{C^s(\mathcal O)}\le L.
\]
Moreover, conditional on $p$, the functions $F$ and $-F$ have the
same distribution under either law. In particular, the signed
corrections used in the construction are realized by genuine
probability laws.

Let $\E_+$ and $\E_-$ denote expectation under these two laws. For
$1\le m\le M$, define
\[
 \mathcal M_{\pm,m}(\mathbf U;u)
 =
 \E_\pm\!\left[
   \prod_{i=1}^m p(U_i)
   \exp\!\left\{\sum_{i=1}^m u_iF(U_i)\right\}
 \right]
\]
and
\[
 D_m(\mathbf U;u)
 =
 \mathcal T_{m,R}\!\left[
 e^{-\delta\sum_{i=1}^m u_i^2/4}\mathcal M_{+,m}(\mathbf U;u)
 -
 e^{ \delta\sum_{i=1}^m u_i^2/4}\mathcal M_{-,m}(\mathbf U;u)
 \right].
\]
For one design point, all retained coefficients agree exactly:
\[
 D_1\equiv0.
\]
For every $2\le m\le M$, the remaining coefficient differences satisfy
\begin{equation}\label{eq:finite-local-contract}
 \sum_{a\in\mathcal A_{m,R}}
 \bigl\|[u^a]D_m\bigr\|_{L^2(Q^m)}^2
 \le
 C_{M,R}\delta^2B_*.
\end{equation}

The same conclusion remains valid after bounded deterministic
substitutions in the response variables. More precisely, for any
$W<\infty$ and measurable functions $w_i:Q\to\mathbb R$ satisfying
$|w_i|\le W$, define
\[
 D_m^{(w)}(\mathbf U;u)
 =
 D_m\bigl(
   \mathbf U;
   w_1(U_1)u_1,\ldots,w_m(U_m)u_m
 \bigr).
\]
Then
\[
 \sum_{a\in\mathcal A_{m,R}}
 \bigl\|[u^a]D_m^{(w)}\bigr\|_{L^2(Q^m)}^2
 \le
 C_{M,R,W}\delta^2B_*,
 \qquad 2\le m\le M,
\]
where $C_{M,R,W}$ is independent of $N,A_k,B_k$, and $\delta$.

Finally, there are constants $\kappa_0>0$ and $C_R<\infty$,
independent of $M$ and $N$, such that for every
$0<\kappa\le\kappa_0$ the response remainder defined in
\eqref{eq:response-remainder-definition} satisfies
\[
 \max_{\pm}
 \operatorname*{ess\,sup}_{\mathbf U\in Q^m}
 \max_{y\in\{-a_Y,0,a_Y\}^m}
 \left|
   \operatorname{Rem}_{\pm,m}(\mathbf U,y;\kappa)
 \right|
 \le
 \mathbf1_{\{m>R\}}C_R^m\kappa^{2R+2},
 \qquad 1\le m\le M.
\]
Let
\[
 \rho_m(\mathbf U)
 =
 \E_\pm\prod_{i=1}^m p(U_i)
\]
denote the common density of the $m$ design points under the two
priors. After dividing the density-weighted response probabilities by
$\rho_m(\mathbf U)$, the same remainder bound holds after increasing
$C_R$. In particular, the remainder vanishes identically when
$m\le R$.

The constant $C_{M,R}$ may depend on $M,R,\eta$ and the fixed family
$\phi$, but not on $N$, the interpolation bounds $A_k,B_k$, or
$\delta$. The constants $\delta_0$, $L$, $\kappa_0$, and $C_R$ may
depend on the fixed family $\phi$, on $R$, on the variance bounds
$v_-,v_+$, the fourth-moment bound $C_4$, and the fixed constants
$v,a_Y$ in \eqref{eq:ternary}, but not on $M$ or $N$. No uniform
bound in $M$ is asserted for $C_{M,R}$.
\end{proposition}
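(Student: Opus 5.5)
The construction proceeds in three stages, following the outline given before the proposition: make the kernels $G_{k,j}$ marginally consistent, realize the consistent corrections as signed measures via \zcref{lem:local-synthesis}, and absorb those signed measures into positive priors. Start from the base prior: $p\equiv1$ (so $\mu$ is a unit point mass, $\rho_k\equiv1$) and $F=\sum_aB_a\phi_a$ with the bounded Gaussian-quadrature coefficients, symmetrized under $F\mapsto-F$. By the Hermite identity, the base prior produces the retained series $M_{0,k}\rho_k$ exactly. The target for $\Pi_\pm$ is $M_{0,k}(\rho_k\pm\frac12L_k)$, with $L_k$ assembled from the polarized kernels $G_{k',j}$ evaluated on subsets of the design points.

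\emph{Marginal consistency.} A single prior produces, for $m$ points, a sum over subsets $A\subseteq[m]$ of $|A|$-point contributions, and integrating out one design point must reproduce the $(m-1)$-point contribution. I therefore define the $q$-point kernels $H_{q,j}$ recursively by inclusion--exclusion (Möbius inversion over subsets):
\[
 H_{q,j}(U_{[q]};T)=\sum_{B\subseteq[q]}(-1)^{q-|B|}\,\widetilde G_{|B|,j}(U_B;T),
\]
where $\widetilde G_{b,j}$ is $G_{b,j}$ with its design variables integrated against the uniform density when $b<q$. Equivalently, apply the centering operators $C_1\cdots C_q$ to $G_{q,j}$. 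This yields kernels with zero integral in each design variable, symmetric in $\mathbf U$ after averaging over permutations, and satisfying $\sum_{|A|\ge2}H_{|A|,j}(U_A)=G_{m,j}$ up to the one-point part. The one-point part $\int G_{1,j}$ must be handled exactly. For $j=1$ it is $\frac{\delta}{2}$ times $\phi_0$-terms, realized by a Hermite perturbation $h_I$ of degree two in $B_0$ (shifting $\E B_0^2$). This gives $D_1\equiv0$ and matches the variance shift, using $\phi_0|_Q=1$. Each centering costs at most a factor $2^q$, so $H_{q,j}$ has representation cost at most $C^q\delta^jA_*^j$.

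\emph{Synthesis and positivity.} Apply \zcref{lem:local-synthesis} separately for each odd $j\le R$, with $r=2j$, to obtain signed measures $\Lambda_j$ on pairs $(p,\Phi)$ with $\|\Lambda_j\|_{\TV}\le I_M\sum_qe^qC^q(\delta A_*)^j$. Each atom $(p,\Phi)$ with $\Phi=\Phi_{\mathcal G(h_I)}/h_R$ corresponds to the perturbed coefficient law $(1\pm\frac12\lambda\,h_I(B))\,d\mathsf R$, together with design density $p$. Define
\[
 \Pi_\pm=\bigl(1-\tfrac12\|\Lambda\|\bigr)\,\delta_{p\equiv1}\otimes\mathsf R+\int d|\Lambda|\,(p,\ (1\pm\tfrac12\operatorname{sgn}\cdot h_I)\mathsf R).
\]
This is nonnegative because $\|h_I\|_\infty\le1$ and $S<\tfrac12$, which is where \eqref{eq:local-budget} enters with $C_{M,R}\gtrsim I_Me^MC^M$. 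The $p$-marginal is identical under the two signs since $\E_{\mathsf R}h_I=0$. Sign symmetry $F\mapsto-F$ holds because $|I|$ is even. The $C^s$ bound is $b_R\sum_a\|\phi_a\|_{C^s}$, and $|p-1|\le\eta$ holds atomwise.

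\emph{Error bounds.} Plugging into \eqref{eq:compensation-identity}, the mismatch is $L_k-2\rho_k\tanh(A_0)$. Its degree-$2j$ part is $2t_j(\delta/4)^j[(\sum_iw_iu_i^2)^j-(\sum_iu_i^2)^j]$, plus cross terms from the products of densities in $\rho_k$, which are bounded in $L^\infty$. Since $|w_i|\le1$, each coefficient is bounded by $C_{M,R}\delta^j\sum_i(1-w_i)$. Hence its squared $L^2$ norm is at most $C\delta^{2}\int\sum_i(1-w_i)\le C\delta^2B_*$, using $0\le1-w_i\le1$ and $\delta\le1$. Multiplication by the bounded series $M_{0,k}\cosh(A_0)$ and truncation preserve this up to constants, giving \eqref{eq:finite-local-contract}. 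The $w$-substitution only rescales coefficients by at most $W^{2R}$. The remainder bound follows from \eqref{eq:response-remainder-definition}: each $J_{\pm,m}$ is a polynomial in $t$ of degree $\le2m$ whose coefficient of $t^{2r}$ is at most $C^m$ (a product of $m$ quadratics with $|F|\le L$). For $\kappa\le\kappa_0<1/(2C)$ the tail beyond degree $2R$ is therefore at most $C_R^m\kappa^{2R+2}$. Dividing by $\rho_m\ge(1-\eta)^m$ costs another $C^m$.

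\emph{Main obstacle.} I expect the consistency step to be hardest. The difficulty is verifying that the inclusion--exclusion kernels reproduce $G_{m,j}$ for every $m\le M$ simultaneously, even though the $G_{k,j}$ for different $k$ come from unrelated interpolation matrices $E_{k,N}$. The fallback is to accept the discrepancy $G_{m,j}-\sum_AH_{|A|,j}$ as part of the error. This requires showing that it is also controlled by the diagonal defects $1-w_{k,i}$, i.e.\ that integrating out a point of $G_{k,j}$ is close to $G_{k-1,j}$ up to $O(\delta^jB_*)$ in $L^2$, since both are close to the ideal $\rho\tanh$ target.
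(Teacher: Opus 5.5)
Your proposal has two genuine gaps, and they sit exactly where the paper's proof needs its two main devices.

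\textbf{The consistency step.} Centering each $G_{q,j}$ against the uniform density with $T$ held fixed gives a consistent family: this is the decomposition $C_1\cdots C_qG_{q,j}$, which is not equivalent to Möbius inversion over the unrelated $G_{b,j}$. However, its evaluation on the diagonal is not close to the ideal. Take $m=2$, $j=1$, $\rho\equiv1$. Up to a fixed normalization, the coefficient of $u_1^2$ comes from $C_1C_2G_{2,1}$ at $T=(U_1,U_1)$. Besides defect-controlled terms, it contains
\[
 -\frac{\delta}{2}\Bigl(g(U_1,U_2)-\int_Q g(U_1,y)\,dy\Bigr),
 \qquad
 g(t,y)=\int_Q\phi(t)^TE_{2,N}(u,y)\phi(t)\,du .
\]
Here $t=U_1$ is not a design argument of $E_{2,N}$, so the cardinal identity gives no control. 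For the matrices of \zcref{sec:simple-geometry}, $L_{x,y}(t)$ has size up to $|t-x|/|y-x|$ off its nodes, and $g(t,y)$ is roughly $c_0+c_1|t-y|^2$ with $c_1>0$. Hence $[u_1^2]D_2$ is of order $\delta$ in $L^2$, whereas \eqref{eq:finite-local-contract} needs order $\delta B_*^{1/2}$, which is about $\delta N^{-d/2}$ in the applications.

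Your fallback fails for the same reason. The error $2\rho\tanh(A_0)$ is small only after every response location has been set equal to a design argument. The consistency relations must hold with $T$ fixed, and there $\int G_{2,1}\,dU_1$ and $G_{1,1}$ differ by $\frac{\delta}{2}(g-1)$.

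The missing idea is the multiplier $b_r(u;T)$, which integrates to one in $u$ but vanishes at every $T_a$. The paper uses $P_iK=b_r(U_i;T)\int K\,dU_i$ and $Q_i=I-P_i$ in place of uniform averaging. Then every reconstruction term omitting a marked index dies upon evaluation, and $Q_i$ acts as the identity on marked indices. So interpolation forms are only evaluated with all marked points among their design arguments, and the ideal part telescopes via $\prod_{i\in J}(P_i+Q_i)\rho_m=\rho_m$. Keeping the cost of multiplying by $b_r-1$ bounded is what forces the Fourier enlargement of the frame.

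\textbf{The positivity step.} You compute the corrections with $\rho_k\equiv1$, but the prior you build does not have uniform design. Your displayed $\Pi_\pm$ has mass $1+\frac12\|\Lambda\|$. After fixing the mass, its design-state law is $(1-\|\Lambda\|)\delta_{p\equiv1}$ plus the $p$-marginal of $|\Lambda|$, whose atoms are densities $1+ta$. Its moments $\rho_m$ differ from $1$ by an amount that does not decay in $N$. The proposition must hold with $\delta A_*$, and hence $\|\Lambda\|$, of order one.

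Because \eqref{eq:compensation-identity} involves the actual $\rho_m$, you are left with an error like $\delta(\rho_m-1)\sum_{i<\ell}u_iu_\ell$. If instead the $B_0$ Hermite perturbation sits only on the base atom, then $D_1\not\equiv0$. Either way the error is again of order $\delta$.

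The paper removes this self-reference using the fact that the correction $\mathcal C_m[\nu]$ is linear in the input design measure $\nu$. It generates corrections state by state on a path space and takes the fixed point $\mu=Z_{\rm path}^{-1}\delta_\varnothing+\mathcal B\mu$. Each state carries only its last-edge score $s_\omega$ plus the common baseline $q_0$, so positivity does not degrade with path length. The realized correction is then exactly $\mathcal C_m[\mu]$, for the same $\mu$ that serves as the design law.

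Your response-remainder argument is fine.
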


\begin{proof}
The proof has three steps. We first modify the interpolation targets so that
the targets for different observation counts can all arise from a single
prior. We then realize the resulting signed moment corrections by two
genuine probability laws having the same distribution of the design density.
Finally, we compare the realized moments with the ideal
variance-compensating moments and bound the resulting coefficient and
response-truncation errors.

\leavevmode\par\medskip\noindent
\emph{Step 1: Consistency across observation counts.}
A single prior on the design density necessarily produces consistent
density-weighted moments: integrating any one design argument from the
$k$-point moment gives the corresponding $(k-1)$-point moment. The
interpolation targets $G_{k,j}$ constructed separately at different values
of $k$ need not satisfy this requirement. We first modify them to obtain a
consistent family.

The construction uses auxiliary factors that integrate to one in a design
variable but vanish when that design variable is later set equal to one of
the response locations. Fix an odd $j\le R$ and put $r=2j$. For fixed
response locations $T=(T_1,\ldots,T_r)\in Q^r$, define
\begin{equation}\label{eq:consistency-multiplier}
 b_r(u;T)
 =
 \frac{\prod_{a=1}^r\psi(u-T_a)}{d_r(T)},
 \qquad
 d_r(T)
 =
 \int_Q\prod_{a=1}^r\psi(v-T_a)\,dv,
 \qquad
 \psi(x)=\sum_{\ell=1}^d(1-\cos2\pi x_\ell).
\end{equation}
Here $Q$ is regarded only for this construction as the periodic cube.
The function $\psi$ is nonnegative and vanishes exactly when its argument
is zero modulo $\mathbb Z^d$. Hence, for real $T$, the integrand defining
$d_r(T)$ is positive except at the finitely many points
$T_1,\ldots,T_r$ modulo the torus. Thus $d_r(T)>0$. Since $d_r$ is
continuous and $Q^r$ is compact,
\[
 \inf_{T\in Q^r}d_r(T)>0.
\]
It follows directly from the definition that
\[
 \int_Q b_r(u;T)\,du=1,
 \qquad
 b_r(T_a;T)=0,\quad 1\le a\le r.
\]

We will also need multiplication by $b_r-1$ to preserve the separated
representations introduced above with a controlled increase in cost. To
obtain this property, we enlarge the smooth family used for interpolation.
Write $g=(g_a)$ for the original family of interpolation functions. The
numerator and denominator in \eqref{eq:consistency-multiplier} extend
analytically to complex values of the response locations. By the positive
lower bound above and uniform continuity on a sufficiently small compact
complex neighborhood of $Q^r$, there is a strip of positive width in which
$d_r$ has no zero. Taking the minimum of these widths over the finitely many
$r\le2R$ gives one strip that works for every response degree used below.

Choose $\vartheta>0$ smaller than the corresponding Fourier decay rate and
enlarge the family $g$ by the real and imaginary parts of
\[
 e^{-\vartheta|n|_1}g_a(t)e^{2\pi i n\cdot t},
 \qquad
 n\in\mathbb Z^d,
\]
retaining the original functions at $n=0$. We denote the resulting real
family again by $\phi$. If
\[
 b_r(u;T)-1
 =
 \sum_{k,\ell_1,\ldots,\ell_r}
 c^{(r)}_{k,\ell}\,
 e^{2\pi i k\cdot u}
 \prod_{a=1}^r e^{2\pi i\ell_a\cdot T_a},
\]
then the coefficients with $k=0$ vanish because
$\int_Q(b_r(u;T)-1)\,du=0$. Exponential Fourier decay gives
\begin{equation}\label{eq:multiplier-cost}
 K_R
 :=
 C_R\max_{r\le2R}
 \left(
  1+
  \sum_{k,\ell}
  |c^{(r)}_{k,\ell}|
  e^{\vartheta\sum_a|\ell_a|_1}
 \right)
 <\infty,
\end{equation}
and, for every specified representation of the form
\eqref{eq:general-response-representation},
\begin{equation}\label{eq:multiplier-cost-bound}
 \operatorname{cost}\bigl((b_r(U_i;T)-1)H\bigr)
 \le
 K_R\operatorname{cost}(H).
\end{equation}
Indeed, multiplication by $e^{2\pi i\ell\cdot T_a}$ shifts the Fourier
index of the frame function attached to the response location $T_a$ and
changes its weighted coefficient by at most
\[
 e^{\vartheta(|n+\ell|_1-|n|_1)}
 \le e^{\vartheta|\ell|_1}.
\]
The factor $e^{2\pi i k\cdot U_i}$ has modulus one and can be absorbed into
the bounded design factor. Passing to real and imaginary parts and
symmetrizing the at most $2R$ response arguments costs only a constant
depending on $R$. Summing the absolute Fourier coefficients gives
\eqref{eq:multiplier-cost-bound}. The enlarged family still satisfies
\eqref{eq:frame-assumption} and
\[
 \sum_a\|\phi_a\|_{C^s(\mathcal O)}<\infty.
\]
Indeed, the H\"older product inequality gives
\[
 \bigl\|g_a e^{2\pi i n\cdot(\cdot)}\bigr\|_{C^s(\mathcal O)}
 \le
 C_s(1+|n|_1)^{\lceil s\rceil}\|g_a\|_{C^s(\mathcal O)},
\]
and the factor $e^{-\vartheta|n|_1}$ makes the resulting sum over $n$
finite. The interpolation matrices themselves are unchanged: they use
only the original $n=0$ coordinates.

We now define operators that separate the part of a kernel determined by
one of its design marginals from the part having zero integral in that
design variable. With the response locations $T$ still fixed independently
of the design variables, set
\begin{equation}\label{eq:canonical-projections}
 P_iK
 =
 b_r(U_i;T)\int_QK\,dU_i,
 \qquad
 Q_iK
 =
 K-P_iK.
\end{equation}
Because $\int_Qb_r(u;T)\,du=1$,
\[
 P_i^2=P_i,
 \qquad
 \int_QQ_iK\,dU_i=0.
\]
The operators associated with distinct design variables commute. Integrating
one bounded design factor costs at most one in the representation
\eqref{eq:general-response-representation}, while multiplication by $b_r$
is multiplication by $1+(b_r-1)$. Hence
\[
 \operatorname{cost}(P_iK)
 +
 \operatorname{cost}(Q_iK)
 \le
 C_R\operatorname{cost}(K)
\]
after increasing $C_R$. Applying such operators in $m$ design variables
therefore increases the cost by at most $C_R^m$. Once a resulting kernel
has zero integral in every design variable and is symmetric in those
variables, the centering and symmetrization argument preceding
\zcref{lem:local-synthesis} puts it in the form
\eqref{eq:local-separated}, at an additional cost of at most $2^m$.

We apply these operators to the target kernels $G_{k,j}$ constructed above.
The degree-two case requires a separate baseline because the desired
singleton correction is nonzero. For $j=1$, let
\[
 B(T_1,T_2)=\frac{\delta}{2},
\]
and, for a finite positive input measure on design densities, write
$\rho_k$ for the corresponding density-weighted $k$-point design density.
Define
\[
 K_{0,1}=B\rho_0,
 \qquad
 G_{1,1}=B\rho_1,
 \qquad
 K_{k,1}=Q_1\cdots Q_kG_{k,1},
 \quad k\ge1.
\]
Thus the degree-two coefficient polynomial at one design point is
$\delta\rho_1z_0^2/2$. Since $\phi_0=1$ on $Q$, its second derivative
with respect to the response variable is $\delta\rho_1$, which is exactly
the singleton covariance correction required by the variance shift.

For every odd $j\ge3$, set
\[
 K_{k,j}=0,\qquad k<j,
\]
and
\[
 K_{k,j}=Q_1\cdots Q_kG_{k,j},
 \qquad k\ge j.
\]
The zero definition for $k<j$ is forced by degree: a retained monomial of
total degree $2j$ with individual response degrees at most two must involve
at least $j$ distinct response locations.

For every odd $j\le R$ and $0\le m\le M$, reconstruct a consistent
$m$-point family by
\begin{equation}\label{eq:local-reconstruct}
 L_{m,j}(\mathbf U;T)
 =
 \sum_{A\subseteq[m]}
 b_{A^c}(\mathbf U;T)
 K_{|A|,j}(U_A;T),
 \qquad
 b_B(\mathbf U;T)
 =
 \prod_{i\in B}b_r(U_i;T).
\end{equation}
The two properties built into this definition are
\begin{equation}\label{eq:consistency-invariants}
 \int_Q
 K_{k,j}(\mathbf U;T)\,dU_i
 =
 0,
 \qquad k\ge1,
\end{equation}
and
\begin{equation}\label{eq:consistency-invariants-2}
 \int_Q
 L_{m,j}(\mathbf U;T)\,dU_i
 =
 L_{m-1,j}(\mathbf U_{\widehat i};T).
\end{equation}
The first follows from the factor $Q_i$ in $K_{k,j}$. For the second,
consider a term in \eqref{eq:local-reconstruct}. If $i\in A$, then
$U_i$ appears inside $K_{|A|,j}$ and the term integrates to zero. If
$i\notin A$, then the term contains $b_r(U_i;T)$, whose integral is one.
The surviving terms are therefore exactly those in the reconstruction at
count $m-1$. These identities are established with $T$ fixed, before any
response location is set equal to a design location.

To apply \zcref{lem:local-synthesis}, we next rewrite the consistent family
in terms of kernels that have zero integral in every design variable.
Define
\begin{equation}\label{eq:local-ordinary}
 H_{m,j}(\mathbf U;T)
 =
 \sum_{A\subseteq[m]}
 (b_r-1)_{A^c}(\mathbf U;T)
 K_{|A|,j}(U_A;T),
\end{equation}
where
\[
 (b_r-1)_B
 =
 \prod_{i\in B}(b_r(U_i;T)-1).
\]
Every design variable in each summand appears either in a centered factor
$b_r-1$ or in a kernel $K_{|A|,j}$ having zero integral in that variable.
Hence $H_{m,j}$ has zero integral in every design variable. The sum over all
subsets also makes it symmetric in the design arguments.

The identity
\[
 b_r=1+(b_r-1)
\]
shows, by expanding over subsets, that the family $L_{m,j}$ is recovered
from the kernels $H_{q,j}$ through
\[
 L_{m,j}(\mathbf U;T)
 =
 \sum_{A\subseteq[m]}
 H_{|A|,j}(U_A;T).
\]
For the representation-cost bound, take $\mu=\delta_p$ to be a unit
point mass. The cost bound for $G_{k,j}$, the bounds for the operators
$P_i,Q_i$, the multiplier estimate \eqref{eq:multiplier-cost-bound}, and
the finite number of subset sums give a specified representation of
$H_{m,j}$ of cost at most
\[
 C_{M,R}(\delta A_*)^j,
\]
where the fixed bound $\|p\|_\infty\le P$ is absorbed into $C_{M,R}$. 
This estimate also includes the term arising from $K_{0,1}$, which
contributes
\[
 (b_r-1)_{[m]}K_{0,1}
\]
at every count $m$, not only at count zero.

The degree-two baseline needs one further adjustment before we can apply
the synthesis lemma, because that lemma represents only corrections
involving at least two perturbed density factors. At counts zero and one,
the definitions above give
\[
 L_{0,1}=B\rho_0,
 \qquad
 K_{1,1}=B\rho_1-b_rB\rho_0,
 \qquad
 L_{1,1}=b_rB\rho_0+K_{1,1}=B\rho_1.
\]
Thus the singleton baseline $B\rho_m$ is already a consistent family across
all counts. It must be subtracted as a whole, rather than only at count one.

For example, when $m=2$, writing
\[
 b_i=b_2(U_i;T),
\]
we have
\[
 L_{2,1}
 =
 b_1b_2B\rho_0
 +
 b_2K_{1,1}(U_1;T)
 +
 b_1K_{1,1}(U_2;T)
 +
 Q_1Q_2G_{2,1}.
\]
Substituting the expression for $K_{1,1}$ and using the marginal identities
for $\rho_0,\rho_1,\rho_2$ gives
\begin{equation}\label{eq:two-count-example}
 L_{2,1}-B\rho_2
 =
 Q_1Q_2(G_{2,1}-B\rho_2).
\end{equation}
Thus after removing $B\rho_2$, the count-two correction has zero integral
in each design variable, while the corrections at counts zero and one
vanish.

For general $m$, define
\begin{equation}\label{eq:corrected-canonical}
 \begin{aligned}
 \widetilde H_{m,1}(\mathbf U;T)
 &=
 H_{m,1}(\mathbf U;T)
 -
 B(T)
 \int
 \prod_{i=1}^m(p(U_i)-1)\,d\mu(p),\\
 \widetilde H_{m,j}
 &=
 H_{m,j},
 \qquad
 3\le j\le R,\quad j\ {\rm odd}.
 \end{aligned}
\end{equation}
The empty product at $m=0$ equals one. Expanding
\[
 \prod_{i=1}^mp(U_i)
 =
 \prod_{i=1}^m\bigl[1+(p(U_i)-1)\bigr]
\]
shows that the subtracted terms in
\eqref{eq:corrected-canonical} are exactly the centered components of the
consistent family $B\rho_m$. Since every density $p$ is normalized,
each factor $p-1$ has integral zero. Consequently,
\[
 \widetilde H_{0,j}
 =
 \widetilde H_{1,j}
 =
 0,
\]
and for every $0\le m\le M$,
\begin{equation}\label{eq:corrected-reconstruction}
 \sum_{\substack{A\subseteq[m]\\|A|\ge2}}
 \widetilde H_{|A|,j}(U_A;T)
 =
 \begin{cases}
 L_{m,1}(\mathbf U;T)-B(T)\rho_m(\mathbf U),
    &j=1,\\
 L_{m,j}(\mathbf U;T),
    &3\le j\le R,\quad j\ {\rm odd}.
 \end{cases}
\end{equation}

We may therefore apply \zcref{lem:local-synthesis} to the kernels
$\widetilde H_{q,j}$ with $2\le q\le M$, separately for each response
degree $2j$. Their representation costs are bounded by
$C_{M,R}(\delta A_*)^j$. The synthesis lemma and
\zcref{lem:local-scalar} then give a signed measure whose total variation
is at most
\[
 S
 =
 C_{M,R}
 \sum_{\substack{1\le j\le R\\ j\ {\rm odd}}}
 (\delta A_*)^j.
\]
The fixed factor
\[
 I_M\sum_{q=2}^Me^q
\]
from \zcref{lem:local-synthesis} has been absorbed into $C_{M,R}$.
All of these operations can be carried out for a unit point mass at any
normalized density $p$ satisfying $|p-1|\le\eta$, with the same
total-variation bound $S$. By linearity, integrating these per-density
constructions against an arbitrary finite positive input measure $\mu$
on such densities gives total variation at most
$S\|\mu\|_{\TV}$.

\leavevmode\par\medskip\noindent
\emph{Step 2: Turning the signed corrections into positive priors.}
Step~1 gives the required corrections as signed measures. We now construct
two genuine probability laws that realize those corrections while having
exactly the same distribution of the design density.

For a finite measure $\nu$ on the set of admissible design densities, let
\[
 \rho_m[\nu](\mathbf U)
 =
 \int\prod_{i=1}^mp(U_i)\,d\nu(p),
\]
and let
\[
 \mathcal L_m[\nu](\mathbf U;z)
\]
be the coefficient polynomial obtained, degree by degree, from the
consistent response functions $L_{m,j}[\nu]$ through the polarization
correspondence \eqref{eq:external-tensor}. Define
\[
 \mathcal C_m[\nu](\mathbf U;z)
 =
 \mathcal L_m[\nu](\mathbf U;z)
 -
 \frac{\delta}{2}z_0^2\rho_m[\nu](\mathbf U).
\]
Thus $\mathcal C_m[\nu]$ removes the degree-two baseline that will later
be supplied directly by the coefficient law. Both
$\rho_m[\nu]$ and $\mathcal C_m[\nu]$ depend linearly on the input measure
$\nu$.

For a point mass $\delta_\omega$ at one admissible density
$p_\omega$, Step~1 provides a specified signed representation of
$\mathcal C_m[\delta_\omega]$, valid simultaneously for every $m\le M$,
with total variation at most $S$. We now choose one common measurable
parameter space for all terms in these representations. Let
$\mathcal Z$ be the disjoint union of the parameter spaces used for the
coefficient indices, Fourier indices, signs in the polarization formula,
nodes in the scalar interpolation measure, and parameters in
\eqref{eq:coefficient-kernel-cost}. All these spaces may be taken to be
standard Borel.

After collecting the specified representations, there are jointly measurable
quantities
\[
 \lambda(\omega,\zeta),
 \qquad
 a_{\omega,\zeta}:Q\to\mathbb R,
 \qquad
 h_{\omega,\zeta},
\]
such that
\[
 \|a_{\omega,\zeta}\|_\infty\le1,
 \qquad
 \int_Qa_{\omega,\zeta}(u)\,du=0,
\]
and $h_{\omega,\zeta}$ is one of the normalized, mean-zero, even scores
defined above. Moreover,
\[
 \int_{\mathcal Z}
 |\lambda(\omega,\zeta)|\,d\sigma(\zeta)
 \le S.
\]
If the scalar interpolation node associated with $\zeta$ is $t_\zeta$,
define the corresponding output density by
\[
 p_{\omega\frown\zeta}(u)
 =
 1+t_\zeta a_{\omega,\zeta}(u).
\]
Since $|t_\zeta|\le\eta$, this is normalized and satisfies
\[
 |p_{\omega\frown\zeta}-1|\le\eta.
\]

To iterate the construction measurably, let
\[
 \Omega
 =
 \bigsqcup_{\ell\ge0}\mathcal Z^\ell
\]
be the space of finite paths. The empty path $\varnothing$ is assigned
the uniform density $p_\varnothing\equiv1$ and score zero. A child
$\omega\frown\zeta$ has the density just defined. Keeping different paths
as distinct states, even when they happen to produce the same density,
avoids any ambiguity in the recursive construction. Parameters with zero
coefficient may be assigned the uniform density and zero score.

With this notation, the signed representation from Step~1 becomes
\begin{equation}\label{eq:correction-representation}
 \mathcal C_m[\nu](\mathbf U;z)
 =
 \int_\Omega\int_{\mathcal Z}
 \lambda(\omega,\zeta)
 \left(\prod_{i=1}^mp_{\omega\frown\zeta}(U_i)\right)
 \mathcal G(h_{\omega,\zeta})(z)
 \,d\sigma(\zeta)\,d\nu(\omega).
\end{equation}
We now dominate this signed representation by a positive kernel on the
state space. Define
\[
 \mathcal B(\omega,\cdot)
 =
 \int_{\mathcal Z}
 2|\lambda(\omega,\zeta)|
 \delta_{\omega\frown\zeta}(\cdot)
 \,d\sigma(\zeta)
\]
and assign to the child state the score
\[
 s_{\omega\frown\zeta}
 =
 \frac14
 \operatorname{sgn}(\lambda(\omega,\zeta))
 h_{\omega,\zeta}.
\]
Then
\[
 \mathcal B(\omega,\Omega)
 \le2S
\]
for every $\omega$. Moreover, the factor $2$ in the definition of
$\mathcal B$ and the factor $1/4$ in the score have been chosen so that
\[
 2\,
 \bigl(2|\lambda|\bigr)\,
 \mathcal G\!\left(
   \frac{\operatorname{sgn}(\lambda)}4h
 \right)
 =
 \lambda\,\mathcal G(h).
\]
Thus a child state with positive mass $2|\lambda|$ and the indicated score
reproduces exactly the signed coefficient $\lambda\mathcal G(h)$.

By \eqref{eq:local-budget}, $2S<1$. Hence the series of positive measures
\[
 \bar\mu
 =
 \sum_{\ell\ge0}\mathcal B^\ell\delta_{\varnothing}
\]
converges in total variation. Let
\begin{equation}\label{eq:positive-fixed-point}
 \begin{gathered}
 Z_{\rm path}
 =
 \bar\mu(\Omega),
 \qquad
 \mu
 =
 Z_{\rm path}^{-1}\bar\mu,\\
 1\le Z_{\rm path}\le(1-2S)^{-1},
 \qquad
 \mu
 =
 Z_{\rm path}^{-1}\delta_{\varnothing}
 +
 \mathcal B\mu.
 \end{gathered}
\end{equation}
The probability measure $\mu$ will be the common distribution of the
design-density state under the two priors.

It remains to specify, conditional on each state $\omega$, the two laws of
the random coefficients $B=(B_a)$. Let
\[
 q_0(B)
 =
 \frac{\delta}{4}(B_0^2-1),
\]
where $B_0$ is the coefficient attached to the distinguished constant
function $\phi_0=1$. Since
$\operatorname{He}_2(B_0)=B_0^2-1$, the generating map satisfies
\[
 \mathcal G(q_0)(z)
 =
 \frac{\delta}{4}z_0^2.
\]
At state $\omega$, define the two conditional coefficient laws by
\[
 \bigl[1\pm(q_0+s_\omega)\bigr]\,d\mathsf R.
\]
Both $q_0$ and $s_\omega$ have mean zero under $\mathsf R$, so these
densities integrate to one. Choose $\delta_0$ sufficiently small that
\[
 \delta_0\|B_0^2-1\|_\infty\le1.
\]
Then
\[
 \|q_0\|_\infty\le\frac14,
 \qquad
 \|s_\omega\|_\infty\le\frac14,
\]
and therefore
\[
 \frac12
 \le
 1\pm(q_0+s_\omega)
 \le
 \frac32.
\]
Thus both conditional coefficient laws are genuine probability laws,
uniformly over the path length. Notice that $s_\omega$ is only the score
attached to the final edge leading to $\omega$; scores from earlier edges
are not accumulated. This is why the positivity bound does not deteriorate
with the path length.

We now verify that these two positive laws realize the desired correction
for the same density-state distribution $\mu$. Since the root has
$s_\varnothing=0$, the fixed-point identity
\eqref{eq:positive-fixed-point} gives
\begin{align*}
 2\int_\Omega
 \left(\prod_{i=1}^mp_\omega(U_i)\right)
 \mathcal G(s_\omega)(z)
 \,d\mu(\omega)
 &=
 2\int_\Omega
 \left(\prod_{i=1}^mp_\omega(U_i)\right)
 \mathcal G(s_\omega)(z)
 \,d(\mathcal B\mu)(\omega).
\end{align*}
Using the definition of $\mathcal B$ and the normalization of
$s_{\omega\frown\zeta}$, the right-hand side is exactly the representation
\eqref{eq:correction-representation}. Hence
\begin{equation}\label{eq:positive-realization}
 \begin{aligned}
 2\int_\Omega
 \left(\prod_{i=1}^mp_\omega(U_i)\right)
 \mathcal G(s_\omega)(z)
 \,d\mu(\omega)
 &=
 \mathcal C_m[\mu](\mathbf U;z),\\
 2\int_\Omega
 \left(\prod_{i=1}^mp_\omega(U_i)\right)
 \mathcal G(q_0+s_\omega)(z)
 \,d\mu(\omega)
 &=
 \frac{\delta}{2}z_0^2\rho_m[\mu](\mathbf U)
 +
 \mathcal C_m[\mu](\mathbf U;z)\\
 &=
 \mathcal L_m[\mu](\mathbf U;z).
 \end{aligned}
\end{equation}
Thus the nonroot states reproduce the correction generated from the same
measure $\mu$ that serves as their common density-state law, while $q_0$
supplies the degree-two baseline at every observation count.

Finally, the base law $\mathsf R$ is invariant under $B\mapsto-B$, and
both $q_0$ and every $s_\omega$ are even functions of $B$. Therefore each
of the two conditional coefficient laws is also invariant under
$B\mapsto-B$. Since
\[
 F=\sum_aB_a\phi_a
\]
changes to $-F$ under this transformation, conditional on the density
$p_\omega$, the functions $F$ and $-F$ have the same distribution. The
bounded support of the coefficients and the summability
$\sum_a\|\phi_a\|_{C^s(\mathcal O)}<\infty$, preserved under the Fourier
enlargement above, give the same deterministic $C^s(\mathcal O)$ bound
for $F$ at every state, independently of $N$ and of the path length.

\leavevmode\par\medskip\noindent
\emph{Step 3: Bounding the compensated response coefficients.}
We now compare the coefficient polynomial realized in
\eqref{eq:positive-realization} with the ideal correction
$2\rho_m\tanh(A_0)$ from \eqref{eq:compensation-identity}. Evaluate
$\mathcal L_m[\mu]$ at the design points by setting
\begin{equation}\label{eq:evaluated-contrast}
 L_m(\mathbf U;u)
 =
 \mathcal T_{m,R}
 \mathcal L_m[\mu]\!\left(
   \mathbf U;
   \left(
     \sum_{i=1}^mu_i\phi_a(U_i)
   \right)_{a\in\mathcal I}
 \right).
\end{equation}
Define the difference
\[
 \mathcal E_m(\mathbf U;u)
 =
 L_m(\mathbf U;u)
 -
 \mathcal T_{m,R}
 \bigl[2\rho_m(\mathbf U)\tanh(A_0)\bigr].
\]
We first bound the derivatives of $\mathcal E_m$ corresponding to its
retained coefficients.

Fix an odd $j\le R$ and a set $A\subseteq[m]$ with $|A|=j$. For a
degree-$2j$ polarized response function $K(\mathbf U;T_1,\ldots,T_{2j})$,
define
\[
 \operatorname{ev}_A K
 =
 (2j)!\,
 K\bigl(
   \mathbf U;
   (U_i,U_i)_{i\in A}
 \bigr).
\]
By the definition \eqref{eq:external-tensor}, this is exactly
\[
 \left.
 \prod_{i\in A}\partial_{u_i}^2
 \right|_{u=0}
\]
applied to the associated response polynomial.

For the target $G_{k,j}$ at any count $k\ge j$, the interpolation identity
gives
\[
 \operatorname{ev}_AG_{k,j}
 =
 \beta_j\delta^j\rho_k
 \prod_{i\in A}w_{k,i},
 \qquad
 \beta_j=2^{1-j}j!t_j.
\]
Indeed,
\[
 [u^{2\mathbf1_A}]
 \left[
  2\rho_kt_j
  \left(
    \frac{\delta}{4}
    \sum_iw_{k,i}u_i^2
  \right)^j
 \right]
 =
 2^{1-2j}j!t_j\delta^j\rho_k
 \prod_{i\in A}w_{k,i},
\]
and applying
$\prod_{i\in A}\partial_{u_i}^2$ multiplies this coefficient by
$(2!)^j$, giving the stated constant $\beta_j$.

We next evaluate the consistent reconstruction
\eqref{eq:local-reconstruct}. Under $\operatorname{ev}_A$, each marked
response location $U_i$, $i\in A$, appears among the fixed response
arguments $T_1,\ldots,T_{2j}$. Hence
\[
 b_r(U_i;T)=0.
\]
Consequently, any term in \eqref{eq:local-reconstruct} that omits a marked
design location $i\in A$ vanishes, because it contains the factor
$b_r(U_i;T)$. Similarly, in a surviving term the $P_i$ part of
$Q_i=I-P_i$ vanishes for every marked $i$, so $Q_i$ acts as the identity
after evaluation. For the remaining, unmarked design variables, evaluation
commutes with integration and hence with $P_i$ and $Q_i$.

Let
\[
 J=[m]\setminus A.
\]
For $S\subseteq J$, write
\[
 P_S=\prod_{i\in S}P_i,
 \qquad
 Q_S=\prod_{i\in S}Q_i,
\]
with empty products equal to the identity. The evaluated reconstruction is
therefore
\[
 \sum_{S\subseteq J}
 b_{J\setminus S}
 Q_S\left[
   \beta_j\delta^j
   \rho_{j+|S|}
   \prod_{i\in A}w_{j+|S|,i}
 \right].
\]
For $j=1$, the count-one target is the baseline defined above, which is
equivalent to setting
\[
 w_{1,1}=1.
\]

To identify the ideal part of this expression, replace every
$w_{k,i}$ by one. The marginal identity for the design densities gives
\[
 b_{J\setminus S}\rho_{j+|S|}
 =
 P_{J\setminus S}\rho_m.
\]
Since the operators $P_i$ and $Q_i$ commute,
\begin{equation}\label{eq:ideal-reconstruction}
 \begin{aligned}
 \sum_{S\subseteq J}
 b_{J\setminus S}Q_S\rho_{j+|S|}
 &=
 \sum_{S\subseteq J}
 P_{J\setminus S}Q_S\rho_m\\
 &=
 \prod_{i\in J}(P_i+Q_i)\rho_m
 =
 \rho_m.
 \end{aligned}
\end{equation}
This is exactly the derivative of the ideal correction
$2\rho_m\tanh(A_0)$.

Subtracting the ideal expression from the actual interpolation expression
therefore gives
\begin{equation}\label{eq:local-defect}
 e_{m,A}
 =
 \sum_{S\subseteq[m]\setminus A}
 b_{([m]\setminus A)\setminus S}
 Q_S
 \left[
   \beta_j\delta^j\rho_{j+|S|}
   \left(
     \prod_{i\in A}w_{j+|S|,i}-1
   \right)
 \right].
\end{equation}
When $j=1$ and the relevant count is one, this difference is zero because
$w_{1,1}=1$.

We now bound \eqref{eq:local-defect} in $L^2$. Since
$|p-1|\le\eta$ under every density state,
\[
 \rho_k\le(1+\eta)^k.
\]
Also, because $0\le w_i\le1$,
\[
 \left|
  1-\prod_{i\in A}w_i
 \right|^2
 \le
 \sum_{i\in A}(1-w_i).
\]
Hence the integrated defect assumption gives
\[
 \left\|
  \rho_k
  \left(
   \prod_{i\in A}w_{k,i}-1
  \right)
 \right\|_{L^2(Q^k)}
 \le
 C_M B_*^{1/2}.
\]
Integration in an unmarked design variable is an $L^2$ contraction because
$Q$ has unit volume. Multiplication by $b_r$ is uniformly bounded, including
after its response arguments have been evaluated. The definitions of
$P_i,Q_i$ therefore imply
\[
 \|Q_SH\|_2
 \le
 C_R^{|S|}\|H\|_2.
\]
There are only finitely many subsets $S\subseteq[m]\setminus A$, with
$m\le M$, so
\[
 \|e_{m,A}\|_{L^2(Q^m)}
 \le
 C_{M,R}\delta^jB_*^{1/2}.
\]

The quantity $e_{m,A}$ is the derivative
$\prod_{i\in A}\partial_{u_i}^2\mathcal E_m|_{u=0}$. Therefore
\[
 [u^{2\mathbf1_A}]\mathcal E_m
 =
 2^{-j}e_{m,A}.
\]
Before multiplication by the common Gaussian factor, these are the only
possibly nonzero retained coefficients. To see this, consider a retained
multi-index $a$ in the degree-$2j$ part and mark
\[
 S=\{i:a_i>0\}.
\]
In the corresponding polarization, the response list contains $U_i$
exactly $a_i$ times for each $i\in S$. The marked-variable argument above
then applies verbatim: any reconstruction term omitting an index $i\in S$
vanishes because $b_r(U_i;T)=0$, while in every surviving term the $P_i$
part of $Q_i$ vanishes at the marked indices and evaluation commutes with
the integrations in the unmarked variables. Thus the surviving raw targets
are evaluated with every marked design location present. Since each such
target is a power of the diagonal quadratic form
$\sum_i w_{k,i}u_i^2$ (with the degree-two baseline likewise diagonal),
this evaluation is zero whenever some $a_i=1$. Hence every possibly
nonzero retained coefficient in degree $2j$ has $a_i\in\{0,2\}$ for all
$i$, and is therefore of the form $u^{2\mathbf1_A}$ with $|A|=j$.
Since $\delta\le1$, summing the squared
bounds over the finitely many odd $j\le R$ and sets
$A\subseteq[m]$ gives
\[
 \sum_{a\in\mathcal A_{m,R}}
 \bigl\|[u^a]\mathcal E_m\bigr\|_{L^2(Q^m)}^2
 \le
 C_{M,R}\delta^2B_*.
\]

The exact compensation identity \eqref{eq:compensation-identity} now gives
\[
 D_m
 =
 \mathcal T_{m,R}
 \bigl[
   M_{0,m}\cosh(A_0)\mathcal E_m
 \bigr].
\]
For fixed $m$ and $R$, the sum of the absolute values of the coefficients
of $M_{0,m}\cosh(A_0)$ through total degree $2R$ is bounded uniformly over
$\mathbf U$. Indeed,
\[
 \sup_{t\in Q}\sum_a|\phi_a(t)|<\infty
\]
by \eqref{eq:frame-assumption}, and $\delta\le1$. Convolution of coefficient
arrays therefore satisfies
\[
 \|c*d\|_{\ell^2}
 \le
 \|c\|_{\ell^1}\|d\|_{\ell^2},
\]
and integration over $Q^m$ yields
\[
 \sum_{a\in\mathcal A_{m,R}}
 \bigl\|[u^a]D_m\bigr\|_{L^2(Q^m)}^2
 \le
 C_{M,R}\delta^2B_*,
\]
which is \eqref{eq:finite-local-contract}. At $m=1$, the realized
correction is exactly the degree-two baseline, so
\[
 D_1\equiv0.
\]

If each response variable is replaced by
$w_i(U_i)u_i$, then the coefficient indexed by $a$ is multiplied by
\[
 \prod_iw_i(U_i)^{a_i}.
\]
If $|w_i|\le W$, the total degree bound $|a|\le2R$ gives an additional
factor at most $\max\{1,W\}^{2R}$ at the coefficient level, and hence at
most $\max\{1,W\}^{4R}$ after squaring. This proves the substituted version
of \eqref{eq:finite-local-contract}, with a constant depending on $W$.

It remains to prove the response-remainder bound. For a fixed density state,
design configuration, and response vector, each factor in
\eqref{eq:response-remainder-definition} is
\[
 p(U_i)
 \left\{
  \beta_{y_i}
  +
  t\ell_{y_i}F(U_i)
  +
  t^2r_{y_i}\bigl(F(U_i)^2\mp\delta/2\bigr)
 \right\}.
\]
The density is bounded by $1+\eta$, while the bounded coefficient law and
the fixed frame give a deterministic uniform bound on $|F|$. Consequently,
the sum of the absolute values of the scalar coefficients in the product of
$m$ such factors is at most
\[
 C_R^m,
\]
uniformly over the state, the design points, and the response vector.
Averaging over either prior preserves this estimate. Conditional symmetry
of $F$ eliminates all odd powers of $t$.

The retained response expansion contains all even powers through degree
$2R$. Therefore, for $0<\kappa\le1$,
\[
 \left|
 \operatorname{Rem}_{\pm,m}(\mathbf U,y;\kappa)
 \right|
 \le
 \mathbf1_{\{m>R\}}
 C_R^m\kappa^{2R+2}.
\]
If $m\le R$, the full response polynomial has degree at most
$2m\le2R$, so the remainder is identically zero.

Finally,
\[
 \rho_m(\mathbf U)
 =
 \E_\pm\prod_{i=1}^mp(U_i)
 \ge
 (1-\eta)^m.
\]
Dividing by $\rho_m$ therefore changes the preceding bound only by another
factor exponential in $m$, which can be absorbed into $C_R^m$. Squaring
and summing over the $3^m$ possible ternary response vectors gives
\[
 \sum_{y\in\{-a_Y,0,a_Y\}^m}
 \left|
  \operatorname{Rem}_{\pm,m}(\mathbf U,y;\kappa)
 \right|^2
 \le
 \mathbf1_{\{m>R\}}
 C_R^m\kappa^{4R+4},
\]
both before and after division by $\rho_m$. All support, smoothness, and
score bounds depend only on the fixed frame and the response-degree cutoff
$R$, and not on $N$ or on the path length. This completes the proof.
\end{proof}

\subsection{Proofs of the local comparison lemmas}\label{sec:local-applications}

\begin{proof}[Proof of \zcref{lem:local}]
Fix $M\ge3$ and set $R=M$, so that every response coefficient arising
from at most $M$ observations is retained. Apply
\zcref{lem:simple-geometry}. For $2\le k\le M$, its interpolation
matrices satisfy
\[
 A_k\le C_MN^2(1+\log N)^{k-2},
 \qquad
 B_k\le C_MN^{-d}(1+\log N)^{k-2}.
\]
With
\[
 \Lambda_N=(1+\log N)^{M-2},
\]
we therefore have
\[
 A_*:=\max\{1,A_2,\ldots,A_M\}
 \le C_MN^2\Lambda_N,
 \qquad
 B_*:=\max_{2\le k\le M}B_k
 \le C_MN^{-d}\Lambda_N.
\]

Choose
\[
 \delta=a_MN^{-2}\Lambda_N^{-1},
\]
where $a_M>0$ is fixed sufficiently small. Then
\[
 \delta A_*\le C_Ma_M.
\]
Since $R=M$ is fixed, choosing $a_M$ small enough makes both
$\delta\le\delta_0$ and the budget in
\eqref{eq:local-budget} satisfy
\[
 C_{M,M}
 \sum_{\substack{1\le j\le M\\ j\ {\rm odd}}}
 (\delta A_*)^j
 <\frac12
\]
uniformly in $N$. We may therefore apply
\zcref{prop:local-construction}.

The proposition produces two probability laws with the same distribution
of the normalized design density, with $|p-1|\le\eta$, conditional sign
symmetry of $F$, and a deterministic $C^s$ bound independent of $N$.
It also gives
\[
 D_1\equiv0
\]
and, for $2\le k\le M$,
\[
 \sum_{a\in\mathcal A_{k,M}}
 \bigl\|[u^a]D_k\bigr\|_{L^2(Q^k)}^2
 \le
 C_M\delta^2B_*
 \le
 C_M\delta^2N^{-d}\Lambda_N.
\]
The same estimate holds after the substitutions
$u_i\mapsto w_i(x_i)u_i$ with $|w_i|\le1$, by the corresponding
conclusion of \zcref{prop:local-construction}.

Finally, if $k\le M=R$, the product of the $k$ quadratic response
probabilities has total degree at most $2k\le2R$. Hence the truncation
$\mathcal T_{k,R}$ discards no response term, and the response remainder
is identically zero. These are exactly the conclusions of
\zcref{lem:local}.
\end{proof}

\begin{proof}[Proof of \zcref{lem:uniform-local}]
Here $R$ is fixed while the largest observation count $M$ may grow. We
therefore use the sharper interpolation matrices from
\zcref{lem:geometry}, which satisfy, for every $k\ge2$,
\[
 A_k\le C^kN^2,
 \qquad
 B_k\le C^kN^{-d},
\]
with a constant $C$ independent of $k,M$, and $N$. The smooth family
$\phi$ used by this construction is fixed once $R$ and the other fixed
parameters are chosen.

Write
\[
 c=\delta N^2.
\]
We first track the total variation required to realize the prescribed
moment corrections. For an odd $j\le R$, the degree-$2j$ target
$G_{k,j}$ is obtained from the $j$th power of the interpolation matrix.
Since $A_k\le C^kN^2$, the representation bound preceding
\zcref{prop:local-construction} gives
\[
 \operatorname{cost}(G_{k,j})
 \le
 C_R^k\delta^jA_k^j
 \le
 C_R^k c^j,
\]
after increasing $C_R$ to absorb the factor $C^{kj}$; this is possible
because $j\le R$ and $R$ is fixed.

The consistency projections, multiplication by the factors $b_r-1$,
the subset reconstruction, and the subtraction of the degree-two
baseline are all described explicitly in
\eqref{eq:multiplier-cost-bound}--\eqref{eq:corrected-canonical}.
Their costs grow at most exponentially in the number $k$ of design
arguments. Consequently, after increasing a constant $K_R>1$ depending
only on the fixed construction,
\[
 \operatorname{cost}(\widetilde H_{k,j})
 \le
 K_R^k c^j,
 \qquad
 2\le k\le M.
\]

We now apply \zcref{lem:local-synthesis}. Its scalar inversion contributes
the factor
\[
 I_M\le C(3/\eta)^M,
\]
and its polarization step contributes at most $e^k$ at design order $k$.
Thus, for $0<c\le1$, the total variation $S$ of all signed corrections
satisfies
\begin{align}
 S
 &\le
 I_M
 \sum_{k=2}^M e^k
 \sum_{\substack{1\le j\le R\\ j\ {\rm odd}}}
 \operatorname{cost}(\widetilde H_{k,j})
 \notag\\
 &\le
 C(3/\eta)^M
 \sum_{k=2}^M(eK_R)^k
 \sum_{\substack{1\le j\le R\\ j\ {\rm odd}}}c^j
 \notag\\
 &\le
 cD_R^M,
 \label{eq:uniform-cost-accounting}
\end{align}
for a fixed $D_R>1$. In the last step we used $c\le1$, so
$\sum_{j\le R,\ j\ {\rm odd}}c^j\le Rc$, and enlarged $D_R$ to absorb
the finite sum over $k$, the factor $R$, and the scalar-inversion
constant.

Choose $a_R>0$ sufficiently small that
\[
 0<c\le a_RD_R^{-M}
\]
implies
\[
 S\le\frac14
 \qquad\text{and}\qquad
 \delta=cN^{-2}\le\delta_0.
\]
Then the positive-measure construction in Step~2 of
\zcref{prop:local-construction} applies. The smooth family, bounded
coefficient distribution, and normalized Hermite scores depend only on
the fixed response cutoff $R$ and not on $M$ or $N$. Hence the resulting
functions $F$ have a deterministic $C^s$ bound that is uniform in
$M,N$, and the two priors have the same design-density distribution and
conditional sign symmetry.

It remains to track the coefficient error uniformly in $M$. Fix an odd
$j\le R$, a count $m\le M$, and a set $A\subseteq[m]$ with $|A|=j$.
In the defect formula \eqref{eq:local-defect}, the interpolation bound
\[
 B_k\le C^kN^{-d}
\]
and the estimates for the consistency operators give
\[
 \|e_{m,A}\|_{L^2(Q^m)}
 \le
 K_R^m\delta^jN^{-d/2}.
\]
Indeed, each term in \eqref{eq:local-defect} contains one diagonal
interpolation defect, whose squared integral is bounded by
$C^kN^{-d}$, while each of the at most $m$ consistency operators changes
the norm by only a fixed factor. Summing over the subsets in that formula
is absorbed into $K_R^m$.

For fixed $R$, the number of retained multi-indices satisfies
\[
 |\mathcal A_{m,R}|
 \le C_R(1+m)^{2R}.
\]
We also need the coefficient size of the common multiplier
$M_{0,m}\cosh(A_0)$. Through total degree $2R$, its coefficient
$\ell^1$ norm satisfies
\[
 \bigl\|M_{0,m}\cosh(A_0)\bigr\|_{\ell^1}
 \le
 C_R(1+m)^{2R}.
\]
To see this, expand both exponentials only through degree $2R$. Every
coefficient is a sum of products of at most $R$ entries of the covariance
kernel
\[
 \sum_a\phi_a(U_i)\phi_a(U_\ell),
\]
which is uniformly bounded because the fixed frame is absolutely
summable.

Using the coefficient convolution inequality
\[
 \|c*d\|_{\ell^2}
 \le
 \|c\|_{\ell^1}\|d\|_{\ell^2}
\]
as in Step~3 of \zcref{prop:local-construction}, and using
$\delta^j\le\delta$ for $j\ge1$, we obtain
\begin{align}
 \sum_{a\in\mathcal A_{m,R}}
 \bigl\|[u^a]D_m\bigr\|_{L^2(Q^m)}^2
 &\le
 C_R(1+m)^{6R}K_R^{2m}\delta^2N^{-d}
 \notag\\
 &\le
 C_R^m\delta^2N^{-d},
 \qquad m\ge2,
 \label{eq:uniform-error-accounting}
\end{align}
after increasing the final constant. This proves
\eqref{eq:uniform-contract}. If each $u_i$ is replaced by
$w_i(x_i)u_i$ with $|w_i|\le1$, every retained coefficient is merely
multiplied by a factor of absolute value at most one, so the same bound
continues to hold.

Finally, the response-remainder argument at the end of
\zcref{prop:local-construction} uses only the fixed $C^s$ support bound,
the fixed ternary response law, and the fixed degree cutoff $R$. It
therefore gives, uniformly in $M$ and $N$,
\[
 \max_{\pm}
 \operatorname*{ess\,sup}_{x\in Q^m}
 \max_{y\in\{-a_Y,0,a_Y\}^m}
 |\operatorname{Rem}_{\pm,m}(x,y;\kappa)|
 \le
 \mathbf1_{\{m>R\}}C_R^m\kappa^{2R+2},
\]
together with the same estimate after division by $\rho_m$ and the
corresponding squared-sum bound over response vectors. This is
\eqref{eq:response-remainder-bound} and completes the proof of
\zcref{lem:uniform-local}.
\end{proof}

\section{From one block to the full sample}\label{sec:gluing}
We now prove \zcref{lem:gluing}, stated in
\zcref{sec:lower-inputs}. One step in the gluing construction reweights
the independent local perturbations by a common exponential factor, chosen
to compensate for the random total mass of the resulting design density.
This reweighting introduces dependence between the local perturbations. The
following lemma shows that, provided the reweighting couples any one
coordinate only weakly to the others, bounded functions still satisfy a
product-type variance bound. We will use this estimate to control the
random normalizing constant of the design density.

We first define the quantities that measure the dependence introduced by
the reweighting. Let
\[
 \mathcal X=\prod_{j=1}^J\mathcal X_j
\]
be a product of finitely many spaces, and let
$x=(x_1,\ldots,x_J)\in\mathcal X$. If $y\in\mathcal X_j$, write
$x^{j\leftarrow y}$ for the point obtained from $x$ by replacing its
$j$th coordinate by $y$. For a bounded function $G:\mathcal X\to\mathbb R$,
define
\[
 \operatorname{osc}_j(G)
 =
 \sup_{x\in\mathcal X}\sup_{y\in\mathcal X_j}
 \bigl|G(x)-G(x^{j\leftarrow y})\bigr|.
\]
For distinct $i,j$, if $y\in\mathcal X_i$ and $z\in\mathcal X_j$, define
\[
 \operatorname{osc}_{ij}(G)
 =
 \sup_{x,y,z}
 \left|
 G(x)-G(x^{i\leftarrow y})-G(x^{j\leftarrow z})
 +G(x^{i\leftarrow y,j\leftarrow z})
 \right|.
\]
Thus $\operatorname{osc}_j(G)$ measures the largest effect of changing
coordinate $j$, while $\operatorname{osc}_{ij}(G)$ measures how much the
effect of changing coordinate $i$ can itself change when coordinate $j$
is changed.

\begin{lemma}\label{lem:tilted-variance}
Let
\[
 \pi=\bigotimes_{j=1}^J\pi_j
\]
be a product probability measure on
$\mathcal X=\prod_{j=1}^J\mathcal X_j$, where each $\mathcal X_j$ is a
standard Borel space. Let $m:\mathcal X\to\mathbb R$ be bounded and
measurable. For $\lambda\ge0$, define the tilted probability measure
$\pi_\lambda$ by
\[
 \frac{d\pi_\lambda}{d\pi}(x)
 =
 \frac{e^{\lambda(m(x)-1)}}{
       \int e^{\lambda(m(y)-1)}\,d\pi(y)}.
\]

There is an absolute constant $c_*>0$ with the following property. For
$i\ne j$, set
\[
 c_{ij}
 =
 c_*\lambda\,\operatorname{osc}_{ij}(m),
 \qquad
 c_{ii}=0,
\]
and define
\[
 q=\max_{1\le i\le J}\sum_{j=1}^J c_{ij}.
\]
If
\[
 q<\frac12,
\]
then every bounded measurable function
$G:\mathcal X\to\mathbb R$ satisfies
\begin{equation}\label{eq:finite-coordinate-variance}
 \operatorname{Var}_{\pi_\lambda}(G)
 \le
 \frac{1}{4(1-q)^2}
 \sum_{j=1}^J\operatorname{osc}_j(G)^2.
\end{equation}
\end{lemma}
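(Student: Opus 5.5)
We would prove \eqref{eq:finite-coordinate-variance} through a Dobrushin-type uniqueness argument for the tilted measure, combined with the Efron--Stein martingale decomposition. The form of the constant, $1/(4(1-q)^2)$ with $\sum_j\operatorname{osc}_j(G)^2$, matches the bound obtained from a contracting interdependence matrix. In that picture, the variance of each single-site conditional increment is controlled by a quarter of the squared oscillation (Popoviciu), and the interdependence is summed through a Neumann series.

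\textbf{Step 1: interdependence of the conditional laws.} Let $\pi_\lambda^{(i)}(\cdot\mid x_{-i})$ be the conditional law of the $i$th coordinate under $\pi_\lambda$. It has density proportional to $e^{\lambda m(x)}$ with respect to $\pi_i$. Changing $x_j$ to $z$ multiplies this density by
\[
 \exp\{\lambda[m(x^{j\leftarrow z})-m(x)]\}.
\]
As a function of $x_i$, this factor varies by at most $\lambda\operatorname{osc}_{ij}(m)$ in the exponent. The factor that does not depend on $x_i$ cancels on normalization. Two tilts of one law whose log-densities differ by a function of oscillation at most $\epsilon$ have total variation distance at most $\tanh(\epsilon/4)\le\epsilon/4$. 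Hence the Dobrushin coefficients satisfy $C_{ij}\le c_*\lambda\operatorname{osc}_{ij}(m)=c_{ij}$ with $c_*=1/4$. The row-sum condition $q<1/2$ is then Dobrushin's uniqueness condition with room to spare.

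\textbf{Step 2: variance bound under Dobrushin's condition.} We then invoke, or reprove, the covariance/variance inequality for Gibbs-type measures under Dobrushin's condition. We aim for
\[
 \operatorname{Var}_{\pi_\lambda}(G)
 \le
 \frac14\bigl\|(I-C)^{-1}\bigr\|^2\sum_j\operatorname{osc}_j(G)^2,
\]
where $\|\cdot\|$ is the $\ell^2$ operator norm. A self-contained route is as follows. Order the coordinates and write $G-\E G=\sum_i d_i$, with
\[
 d_i=\E[G\mid\mathcal F_i]-\E[G\mid\mathcal F_{i-1}]
\]
under $\pi_\lambda$, where $\mathcal F_i$ is generated by $x_1,\dots,x_i$. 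Then $\operatorname{Var}(G)=\sum_i\E d_i^2$.

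Next we bound the oscillation in $x_i$ of $g_i=\E[G\mid\mathcal F_i]$. Integrating out $x_{i+1},\dots,x_J$ under the conditional law given $x_1,\dots,x_i$ and coupling via Dobrushin's comparison theorem gives
\[
 \operatorname{osc}_i(g_i)\le \bigl[(I-C)^{-1}\operatorname{osc}(G)\bigr]_i ,
\]
where $C$ is the interdependence matrix from Step 1. Popoviciu's inequality then gives $\E[d_i^2\mid\mathcal F_{i-1}]\le\frac14\operatorname{osc}_i(g_i)^2$.

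Summing over $i$ gives the bound above. To compute the operator norm, we use the max row sum $q$ and also the max column sum. Because $\operatorname{osc}_{ij}$ is symmetric in $i,j$ by its definition, $C$ can be taken symmetric, so the column sums are also at most $q$. Schur's test then gives $\|C\|_{2\to2}\le q$, and the Neumann series gives $\|(I-C)^{-1}\|\le(1-q)^{-1}$. This yields exactly $\frac{1}{4(1-q)^2}$.

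\textbf{Main obstacle.} The delicate step is Step 2's comparison bound $\operatorname{osc}_i(g_i)\le[(I-C)^{-1}\operatorname{osc}(G)]_i$. Dobrushin's comparison theorem compares two measures with different conditional specifications, and here the specifications differ only through the pinned coordinate $x_i$. The plan is to apply the theorem to the conditional measure on coordinates $i+1,\dots,J$ under the two pinnings $x_i=y$ and $x_i=y'$. The discrepancy in the one-site conditionals at site $k$ is then at most $C_{ki}$. The resulting bound for the change in $\E G$ is
\[
 \operatorname{osc}_i(G)+\sum_{k}\bigl[\operatorname{osc}(G)^{T}(I-C)^{-1}\bigr]_k C_{ki},
\]
which is dominated by the $i$th entry of $(I-C)^{-1}\operatorname{osc}(G)$ because $C$ is symmetric. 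If the measurability issues for the standard Borel spaces become tedious, we would first prove the inequality for finitely-valued coordinates and then pass to the limit by approximation, since both sides are stable under approximating $m$ and $G$.
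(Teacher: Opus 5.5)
Your proposal is correct and follows essentially the same route as the paper. Both arguments use the one-site total-variation bound $c_{ij}=c_*\lambda\operatorname{osc}_{ij}(m)$, and both use a Doob martingale whose $i$th increment has range at most $[(I-C)^{-1}g]_i$, obtained by using symmetry of $C$ to dominate $g_i+g_S^T(I-C_S)^{-1}C_{Si}$. The $a^2/4$ variance bound for bounded increments and $\|C\|_{2\to2}\le q$ then finish both proofs. The only difference is how the propagation step is obtained: the paper uses an explicit random-scan single-site coupling whose disagreement probabilities satisfy a linear recursion, while you invoke Dobrushin's comparison theorem, which gives the same expression.
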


\begin{proof}
We first bound how strongly one coordinate can affect the conditional law of
another under the tilted measure. Fix distinct $i,j$. If all coordinates
except $i$ are held fixed, the conditional density of $x_i$ under
$\pi_\lambda$ is proportional to
\[
 \exp\{\lambda m(x)\}\,d\pi_i(x_i).
\]
Changing only coordinate $j$ therefore changes the logarithm of the ratio of
two such conditional densities, as a function of $x_i$, by at most
\[
 \lambda\,\operatorname{osc}_{ij}(m).
\]
An elementary comparison between two probability measures whose log-density
ratio has oscillation at most $\alpha$ gives total variation distance at
most $c_*\alpha$, for an absolute constant $c_*$. Hence changing coordinate
$j$ changes the conditional law of coordinate $i$ by at most
\[
 c_{ij}=c_*\lambda\,\operatorname{osc}_{ij}(m).
\]
The matrix $C_0=(c_{ij})$ is symmetric, because
$\operatorname{osc}_{ij}(m)=\operatorname{osc}_{ji}(m)$, and by assumption
every row sum, and hence every column sum, is at most $q<1/2$.

We next quantify how changing one exposed coordinate can propagate through
the remaining coordinates. Fix a set $S$ of coordinates that have not yet
been exposed, and fix two different boundary values of a coordinate
$i\notin S$. Let $\mu$ and $\mu'$ be the corresponding conditional laws of
the coordinates in $S$ under $\pi_\lambda$. We construct a coupling of
$\mu$ and $\mu'$ by random-scan single-coordinate updates.

Start from any coupling of $\mu$ and $\mu'$. At each step, choose a
coordinate $j\in S$ uniformly at random and update that coordinate in both
copies using a maximal coupling of its two conditional laws given the
current values of all other coordinates. Since each marginal chain starts
in its stationary conditional law, both marginals remain equal to $\mu$
and $\mu'$ at every time.

Let
\[
 p_j(t)
 =
 \Pp\{\text{the two copies disagree at coordinate $j$ after $t$ updates}\},
 \qquad j\in S,
\]
and write $p(t)=(p_j(t))_{j\in S}$. When coordinate $j$ is updated, its two
conditional laws may differ because the boundary value at coordinate $i$
is different and because the two copies may disagree at some coordinates
$\ell\in S$. By changing those coordinates one at a time and using the
conditional total-variation bounds above, the maximal-coupling failure
probability is at most
\[
 c_{ji}+\sum_{\ell\in S}c_{j\ell}
 \mathbf1_{\{X_\ell\ne X'_\ell\}}.
\]
Taking expectations and averaging over the uniformly chosen update
coordinate gives the componentwise inequality
\[
 p(t+1)
 \le
 \left(
 I-\frac{I-(C_0)_S}{|S|}
 \right)p(t)
 +
 \frac{(C_0)_{Si}}{|S|},
\]
where $(C_0)_S$ is the submatrix of $C_0$ indexed by $S$ and
$(C_0)_{Si}=(c_{ji})_{j\in S}$.

The matrix
\[
 I-\frac{I-(C_0)_S}{|S|}
\]
has nonnegative entries and row sums at most
\[
 1-\frac{1-q}{|S|}<1.
\]
Iterating the preceding inequality therefore yields
\[
 \limsup_{t\to\infty}p(t)
 \le
 (I-(C_0)_S)^{-1}(C_0)_{Si}.
\]
We do not need convergence of the coupled chains themselves; the bound on
the disagreement probabilities is enough.

We now apply this coupling estimate to conditional expectations of $G$.
Set
\[
 g_j=\operatorname{osc}_j(G),
 \qquad
 g=(g_1,\ldots,g_J)^T.
\]
If two configurations differ at coordinate $i$ and, after coupling the
remaining coordinates in $S$, also differ at a set of coordinates in $S$,
then telescoping one coordinate at a time gives
\[
 |G(X)-G(X')|
 \le
 g_i+\sum_{j\in S}g_j\mathbf1_{\{X_j\ne X'_j\}}.
\]
Taking expectations under the coupling above shows that the range of the
conditional expectation of $G$, as the boundary value of coordinate $i$
varies, is at most
\[
 g_i
 +
 g_S^T(I-(C_0)_S)^{-1}(C_0)_{Si}.
\]
Since $C_0$ has nonnegative entries, expanding the inverse in a Neumann
series gives
\[
 g_i
 +
 g_S^T(I-(C_0)_S)^{-1}(C_0)_{Si}
 \le
 [(I-C_0)^{-1}g]_i.
\]

Finally, expose the coordinates successively and let
\[
 M_i
 =
 \E_{\pi_\lambda}[G\mid X_1,\ldots,X_i],
 \qquad
 0\le i\le J,
\]
be the associated Doob martingale. The preceding bound shows that, conditional
on $X_1,\ldots,X_{i-1}$, the range of the increment $M_i-M_{i-1}$ is at
most
\[
 [(I-C_0)^{-1}g]_i.
\]
A random variable supported on an interval of length $a$ has variance at
most $a^2/4$. Orthogonality of the martingale increments therefore gives
\[
 \operatorname{Var}_{\pi_\lambda}(G)
 \le
 \frac14
 \bigl\|(I-C_0)^{-1}g\bigr\|_2^2.
\]
Because $C_0$ is symmetric and every row sum is at most $q$,
\[
 \|C_0\|_{2\to2}\le q.
\]
Hence
\[
 \|(I-C_0)^{-1}\|_{2\to2}
 \le
 \frac1{1-q},
\]
and so
\[
 \operatorname{Var}_{\pi_\lambda}(G)
 \le
 \frac1{4(1-q)^2}\|g\|_2^2
 =
 \frac1{4(1-q)^2}
 \sum_{j=1}^J\operatorname{osc}_j(G)^2.
\]
This proves \eqref{eq:finite-coordinate-variance}.
\end{proof}

\begin{proof}[Proof of \zcref{lem:gluing}]
The proof has four steps. We first place independent copies of the local
construction across the design space and verify that their variance shifts
combine to a single constant shift. The resulting product of local density
perturbations is not exactly normalized, so we next reweight their common
density law in a way that cancels this random mass in a Poisson experiment.
We then combine the local coefficient comparisons over the connected
components of the observed design. Finally, we normalize the design density
and bound the Hellinger cost of doing so.

\leavevmode\par\medskip\noindent
\emph{Step 1: Combining the local constructions.}
Temporarily identify opposite faces of $[0,1]^d$, so that the design space
is the unit torus. Choose cubes $(Q_j)$ of diameter at most $Ch$ and volume
comparable to $h^d$ such that every point belongs to at most a fixed number
of cubes and each cube intersects at most a fixed number of the others.
There are at most $Ch^{-d}$ cubes. We can choose smooth functions $w_j$
supported strictly inside $Q_j$ such that
\begin{equation}\label{eq:smooth-windows}
 \sum_jw_j(x)^2=1,
 \qquad
 \|\partial^\gamma w_j\|_\infty
 \le C_\gamma h^{-|\gamma|}
 \quad\text{for every multi-index }\gamma.
\end{equation}
For example, start from translates of one nonnegative smooth bump on the
$h$-grid whose positive sets cover the torus, and divide each translate by
the square root of the sum of the squared translates.

In each block $Q_j$, take an independent copy of the local pair $(p,F)$
from the two priors in the hypothesis of the lemma, rescaled from the
reference cube $Q$ to $Q_j$. Denote the resulting functions by
$(p_j,F_j)$. Extend $p_j$ by $1$ outside $Q_j$. The random density $p_j$
has the same distribution under the two signs, and conditional on $p_j$,
$F_j$ and $-F_j$ have the same distribution. Define
\begin{equation}\label{eq:global-fields}
 \widetilde p(x)=\prod_jp_j(x),
 \qquad
 f(x)=\kappa\sum_jw_j(x)F_j(x),
 \qquad
 \kappa=c_fh^s.
\end{equation}

Because only a bounded number of blocks overlap at any point and
$|p_j-1|\le\eta$, the unnormalized density $\widetilde p$ has deterministic
positive upper and lower bounds. The same bounded-overlap property,
\eqref{eq:smooth-windows}, and the uniform $C^s$ bound on each rescaled
$F_j$ give
\[
 \|f\|_{C^s}\le Cc_f.
\]
The support of each $w_j$ lies strictly inside $Q_j$, so $w_jF_j$ vanishes
in a collar of the boundary of the block and therefore extends as a
$C^s$ function across that boundary. Periodically lifting the torus
construction therefore gives a $C^s$ extension to a neighborhood of
$[0,1]^d$. 
We choose $c_f>0$ sufficiently small that the prescribed
$C^s$ bound and the response-probability and fourth-moment constraints are
satisfied. Since $\delta\le1$ and $\kappa\le c_f$, the variance values
\[
 v\mp\frac{\Delta}{2},
 \qquad
 \Delta=\kappa^2\delta,
\]
also remain in the admissible variance interval.

Let $\E_\pm$ denote expectation under the corresponding product of the
independent local priors, before any reweighting. For fixed design points
$x_1,\ldots,x_k$, independence of the blocks gives
\begin{equation}\label{eq:block-product}
 \E_\pm\!\left[
   \prod_{i=1}^k\widetilde p(x_i)
   \exp\left\{\sum_{i=1}^ku_i f(x_i)/\kappa\right\}
 \right]
 =
 \prod_j
 \E_\pm\!\left[
   \prod_{i:x_i\in Q_j}p_j(x_i)
   \exp\left\{
     \sum_{i:x_i\in Q_j}w_j(x_i)u_iF_j(x_i)
   \right\}
 \right].
\end{equation}
A point belonging to more than one block appears in each corresponding
factor, exactly as it does in the definitions of $\widetilde p$ and $f$.

We now apply the local compensated identities inside the product on the
right. In block $j$, replace each formal response variable by
$w_j(x_i)u_i$. The hypothesis of the lemma guarantees that the local
coefficient estimate is unchanged by this substitution. The quadratic
variance-compensation factor from block $j$ is
\[
 \exp\left\{
   \mp\frac{\delta}{4}
   \sum_iw_j(x_i)^2u_i^2
 \right\}.
\]
Multiplying these factors over $j$ and using
$\sum_jw_j(x)^2=1$ gives
\[
 \prod_j
 \exp\left\{
   \mp\frac{\delta}{4}
   \sum_iw_j(x_i)^2u_i^2
 \right\}
 =
 \exp\left\{
   \mp\frac{\delta}{4}\sum_i u_i^2
 \right\}.
\]
Thus all local variance corrections combine into the single constant
variance shift $v\mp\Delta/2$.

Let
\[
 q_\pm(y\mid x)
 =
 q_y\left(f(x),v\mp\frac{\Delta}{2}\right).
\]
As in \eqref{eq:full-retained-response-map}, applying
$\mathcal Q_y\mathcal T_{k,k}$ to the untruncated compensated product,
after the physical substitution $u_i\mapsto\kappa u_i$, gives
\[
 \E_\pm
 \prod_{i=1}^k
 \widetilde p(x_i)q_\pm(y_i\mid x_i).
\]
Replacing $\mathcal T_{k,k}$ by $\mathcal T_{k,R}$ keeps only the terms
of total response degree at most $2R$. The difference is the corresponding
response remainder. Since the global function
$\sum_jw_jF_j$ has a deterministic uniform bound and is conditionally
symmetric, the same scalar-polynomial estimate used in
\eqref{eq:response-remainder-bound} applies to this global remainder.

\leavevmode\par\medskip\noindent
\emph{Step 2: Removing the random total mass.}
Although each local factor $p_j$ is normalized on its block, their product
$\widetilde p$ need not integrate to one. Write
\[
 p_j=1+b_j,
 \qquad
 \int_{Q_j}b_j(x)\,dx=0,
 \qquad
 |b_j|\le\eta,
\]
and define its total mass
\[
 m=\int_{[0,1]^d}\widetilde p(x)\,dx.
\]
Let $\pi$ denote the product law of the independent density states
$(p_j)_j$. This law is common to the two priors.

We first quantify how much $m$ can depend on any one local density. Because
the linear terms in the expansion of $\prod_j(1+b_j)$ integrate to zero,
only products involving overlapping perturbations contribute to $m-1$.
Bounded overlap therefore gives
\begin{equation}\label{eq:mass-osc}
 |m-1|\le C\eta^2,
 \qquad
 \operatorname{osc}_j(m)\le C\eta^2h^d,
 \qquad
 \operatorname{osc}_{ij}(m)
 \le
 C\eta^2h^d
 \mathbf1_{\{Q_i\cap Q_j\ne\varnothing\}}.
\end{equation}
For example, if only the $j$th density is changed from $1+b_j$ to
$1+b_j'$, the change in $m$ is
\[
 \int_{Q_j}
 (b_j'-b_j)
 \left[
   \prod_{i\ne j}(1+b_i)-1
 \right]dx.
\]
The subtracted constant may be inserted because
$\int_{Q_j}(b_j'-b_j)=0$. On $Q_j$, only a bounded number of the remaining
$b_i$ can be nonzero, so the bracket is $O(\eta)$. This proves the
first-coordinate oscillation bound. Taking an alternating difference in
two density coordinates gives the final bound in
\eqref{eq:mass-osc}; it vanishes unless the two blocks overlap. Summing the
terms in the product expansion gives the deterministic bound on $|m-1|$.

For $\lambda\ge0$, define the exponentially reweighted density-state law
\begin{equation}\label{eq:mass-tilt}
 Z_{\rm mass}(\lambda)
 =
 \E_\pi e^{\lambda(m-1)},
 \qquad
 \frac{d\pi_\lambda}{d\pi}
 =
 \frac{e^{\lambda(m-1)}}{Z_{\rm mass}(\lambda)}.
\end{equation}
The overlap graph of the blocks has bounded degree. Hence
\eqref{eq:mass-osc} implies that the row sums in
\zcref{lem:tilted-variance} are bounded by
\[
 C\lambda\eta^2h^d.
\]
After fixing $\eta$ sufficiently small and increasing the constant in the
hypothesis $C\nu h^d\le1/2$ if necessary, this quantity is below $1/2$
for every value of $\lambda$ used below. Applying
\zcref{lem:tilted-variance} to $G=m$ gives
\[
 \operatorname{Var}_{\pi_\lambda}(m)
 \le
 C\sum_j\operatorname{osc}_j(m)^2.
\]
There are at most $Ch^{-d}$ blocks, so
\begin{equation}\label{eq:tilted-variance}
 \operatorname{Var}_{\pi_\lambda}(m)
 \le
 C\eta^4h^d.
\end{equation}

We next choose the overall intensity scale. Let
\[
 F(a)=a\,\E_{\pi_{\nu a}}m,
 \qquad a>0.
\]
The deterministic bound $m=1+O(\eta^2)$ shows that $m$ is bounded above
and away from zero. Hence $F(a)$ is below $1$ at the reciprocal of a
sufficiently large deterministic upper bound for $m$, and above $1$ at the
reciprocal of a deterministic lower bound. Moreover,
\[
 F'(a)
 =
 \E_{\pi_{\nu a}}m
 +
 \nu a\operatorname{Var}_{\pi_{\nu a}}(m)
 >0.
\]
There is therefore a unique $a>0$ such that
\[
 a\,\E_{\pi_{\nu a}}m=1.
\]
The bound $m=1+O(\eta^2)$ also gives
\[
 a=1+O(\eta^2).
\]

Use the common tilted density-state law $\pi_{\nu a}$ under both priors,
and retain, conditional on each density state, the original conditional
laws of the local functions $F_j$. Before normalizing $\widetilde p$,
consider the auxiliary marked Poisson experiment having, conditional on the
state, intensity
\[
 \nu a\,\widetilde p(x)q_\pm(y\mid x)\,dx
\]
on $[0,1]^d\times\mathcal Y$, where
\[
 \mathcal Y=\{-a_Y,0,a_Y\}.
\]

We record its mixture density explicitly because this is where the choice
of tilt is used. Represent a finite marked configuration by an ordered
tuple and use the reference measure
\begin{equation}\label{eq:configuration-measure}
 \Lambda_{\rm conf}\big|_k
 =
 \frac1{k!}
 (dx\otimes\#_{\mathcal Y})^{\otimes k}
\end{equation}
on the $k$-point component, with unit mass on the empty configuration.
Conditional on a density state of total mass $m$, the Poisson factor is
$e^{-\nu am}$. Multiplying it by the Radon--Nikodym factor
$e^{\nu a(m-1)}/Z_{\rm mass}(\nu a)$ from the common tilt gives
\[
 e^{-\nu am}
 \frac{e^{\nu a(m-1)}}{Z_{\rm mass}(\nu a)}
 =
 \frac{e^{-\nu a}}{Z_{\rm mass}(\nu a)}.
\]
The random mass has therefore disappeared. After integrating over the
original independent local laws, the auxiliary mixture density on the
$k$-point component is
\begin{equation}\label{eq:janossy}
 \frac{e^{-\nu a}(\nu a)^k}{Z_{\rm mass}(\nu a)}
 \E_\pm
 \prod_{i=1}^k
 \widetilde p(x_i)q_\pm(y_i\mid x_i).
\end{equation}
This cancellation is the reason for using the same tilt under the two
priors.

\leavevmode\par\medskip\noindent
\emph{Step 3: Comparing the two auxiliary experiments.}
We now convert the local coefficient estimates into a Hellinger bound for
the complete auxiliary sample.

Given a finite design configuration, join two observed design points
whenever some block $Q_j$ contains both of them. If two points belong to
different connected components of this graph, no local block contains
points from both components. Since the expectation in
\eqref{eq:janossy} is taken under the original product of independent local
laws, both its numerator and its design marginal factor over these
components. Consequently, conditional on the entire observed design, the
response distribution is a product over the connected components.

Let $\mathsf D$ denote the common design law of the auxiliary experiment.
Although $\mathsf D$ need not itself be a product law, it is a Cox-process
mixture under the tilted density-state law. Its $k$th factorial moment
density is therefore
\begin{equation}\label{eq:design-factorial-density}
 \rho_{\mathsf D}^{(k)}(x_1,\ldots,x_k)
 =
 (\nu a)^k
 \E_{\pi_{\nu a}}
 \prod_{i=1}^k\widetilde p(x_i).
\end{equation}
The deterministic bounds on $a$ and $\widetilde p$ give
\[
 \rho_{\mathsf D}^{(k)}(x)\le(C\nu)^k.
\]

We first bound the conditional response discrepancy for one connected
tuple. For distinct design points
$x=(x_1,\ldots,x_k)$, define probability laws on $\mathcal Y^k$ by
\begin{equation}\label{eq:tuple-response-law}
 \mathsf Q_{\pm,k}(y\mid x)
 =
 \frac{
   \E_\pm
   \prod_{i=1}^k
   \widetilde p(x_i)q_\pm(y_i\mid x_i)
 }{\varrho_k(x)},
 \qquad
 \varrho_k(x)
 =
 \E_\pi
 \prod_{i=1}^k\widetilde p(x_i),
\end{equation}
and set
\[
 d_k(x)
 =
 H^2\bigl(
   \mathsf Q_{+,k}(\cdot\mid x),
   \mathsf Q_{-,k}(\cdot\mid x)
 \bigr).
\]
The expectation in the numerator is under the original, untilted product
of the local priors. This is exactly the law that appears after the
mass cancellation in \eqref{eq:janossy}. If the $k$ points form an entire
connected component of a larger design configuration, the factorization
just described shows that \eqref{eq:tuple-response-law} is its actual
conditional response law. For an arbitrary connected $k$-tuple contained
inside a larger component, we use the same formula only as a nonnegative
comparison quantity. Exact one-point matching gives
\[
 d_1=0.
\]

We next obtain a pointwise estimate for $d_k$. Fix a block $Q_j$ and a
label set $A$ of $q$ observations lying in that block. Let
$D_q^{(j,A)}$ denote the local compensated polynomial, written in reference
coordinates and with the substitutions
\[
 u_i\mapsto w_j(x_i)u_i,
 \qquad i\in A.
\]
Define
\begin{equation}\label{eq:local-error-function}
 \mathfrak e_{j,A}(x_A)
 =
 \sum_{a\in\mathcal A_{q,R}}
 \left|[u^a]D_q^{(j,A)}\right|^2.
\end{equation}
The hypothesis of \zcref{lem:gluing}, including its stability under the
bounded substitutions by $w_j$, and the affine rescaling from the unit
reference cube to $Q_j$ give
\begin{equation}\label{eq:local-error-integral}
 \int_{Q_j^q}
 \mathfrak e_{j,A}(x_A)\,dx_A
 \le
 C^q h^{dq}\delta^2\xi,
 \qquad
 2\le q\le M.
\end{equation}
For $q=0,1$, set $\mathfrak e_{j,A}=0$.

Suppose now that $2\le k\le M$, and put
\[
 I_j=\{i:x_i\in Q_j\}.
\]
Telescope the product of local factors in \eqref{eq:block-product}, replacing
the $+$ factor by the $-$ factor one block at a time. The constant
coefficient of every local difference vanishes because the two local priors
have the same design-density distribution. Its coefficients of odd degree
vanish because the conditional laws of $F_j$ are sign-symmetric. Hence,
after the substitution $u_i\mapsto\kappa u_i$, every local difference
contains at least a factor $\kappa^2$.

The remaining local factors have, through response degree $2R$, coefficient
$\ell^1$ norm at most $C^{|I_j|}$ by the uniform bounds on the densities
and functions. Bounded overlap gives
\[
 \sum_j|I_j|\le Ck
\]
and at most $Ck$ nonconstant factors. The coefficient convolution
inequality, followed by Cauchy--Schwarz in the telescoping sum, therefore
gives
\[
 \|\text{retained coefficient difference}\|_{\ell^2}^2
 \le
 C_R^k\kappa^4
 \sum_j\mathfrak e_{j,I_j}(x_{I_j}).
\]

We convert this coefficient estimate into a Hellinger estimate for the
ternary response cells. The common denominator in
\eqref{eq:tuple-response-law} satisfies
\[
 \varrho_k(x)\ge c^k,
\]
because $\widetilde p$ is uniformly bounded below. By our choice of $c_f$,
each conditional ternary probability is also bounded below by a fixed
positive constant, so every $k$-response cell under
$\mathsf Q_{\pm,k}$ has probability at least $c^k$. Applying
\eqref{eq:response-map}, dividing by $\varrho_k$, and using
\[
 (\sqrt{x}-\sqrt{y})^2
 \le
 \frac{(x-y)^2}{y}
\]
for each response cell changes the preceding coefficient estimate by at
most a factor $C_R^k$.

The response terms beyond degree $2R$ are controlled directly from the
bounded global function $\sum_jw_jF_j$. They vanish when $k\le R$ and,
when $k>R$, their squared contribution is at most
$C_R^k\kappa^{4R+4}$. We therefore obtain
\begin{equation}\label{eq:pointwise-component}
 d_k(x)
 \le
 C_R^k
 \left\{
   \kappa^4
   \sum_{j:|I_j|\ge2}
   \mathfrak e_{j,I_j}(x_{I_j})
   +
   \mathbf1_{\{k>R\}}\kappa^{4R+4}
 \right\},
 \qquad
 2\le k\le M.
\end{equation}

We now integrate this estimate over connected configurations. Fix a term
in the first sum in \eqref{eq:pointwise-component}, let
$q=|I_j|\ge2$, and let $A$ be its set of labels. Summing
\eqref{eq:local-error-integral} over the $O(h^{-d})$ possible blocks gives
\[
 C^q\delta^2\xi\,h^{d(q-1)}.
\]
The $q$ marked points all lie in one block and hence already form a connected
set.

If the full configuration on $k$ labels is connected, choose for every
unmarked point a parent along a path to this marked connected set. This
produces a forest rooted at the $q$ marked labels in which every child lies
within distance $Ch$ of its parent. There are at most $k^{k-q}$ such
forests. Once the marked points are fixed, integrating the remaining
$k-q$ points from the leaves toward the roots contributes at most
\[
 (Ch^d)^{k-q}.
\]
Summing over the choices of the $q$ marked labels therefore gives
\begin{align}
 &\int_{\{x:\,x\ {\rm connected}\}}
 \kappa^4
 \sum_{j:|I_j|\ge2}
 \mathfrak e_{j,I_j}(x_{I_j})\,dx
 \notag\\
 &\qquad\le
 \Delta^2\xi
 \sum_{q=2}^k
 C^k
 \binom{k}{q}
 k^{k-q}
 h^{d(q-1)}
 h^{d(k-q)}
 \notag\\
 &\qquad\le
 C^k k! h^{d(k-1)}\Delta^2\xi,
 \label{eq:forest-error-accounting}
\end{align}
where we used $\Delta^2=\kappa^4\delta^2$. For the last inequality,
\[
 \binom{k}{q}k^{k-q}
 =
 \frac{k!}{q!}\frac{k^{k-q}}{(k-q)!}
 \le
 e^k\frac{k!}{q!},
\]
and $\sum_{q\ge2}1/q!<\infty$.

The same geometric argument without marked points gives
\[
 \bigl|
 \{x\in([0,1]^d)^k:x\text{ is connected}\}
 \bigr|
 \le
 C^k k!h^{d(k-1)}.
\]
Indeed, sum over spanning trees, choose one root freely, and integrate each
remaining vertex in a ball of volume $Ch^d$ around its parent. Combining
this volume bound with \eqref{eq:pointwise-component} yields
\begin{equation}\label{eq:component-bound}
 \int_{\{x:\,x\ {\rm connected}\}}d_k(x)\,dx
 \le
 C_R^k k!h^{d(k-1)}
 \left\{
   \Delta^2\xi
   +
   \mathbf1_{\{k>R\}}\kappa^{4R+4}
 \right\},
 \qquad
 2\le k\le M.
\end{equation}

For $k>M$, we no longer use the local moment-matching estimate. The common
density marginal cancels the constant term in the difference of the two
response probabilities, while sign symmetry cancels all terms of odd degree.
Uniform boundedness therefore gives the crude pointwise bound
\[
 |\text{response-probability difference}|
 \le C^k\kappa^2,
\]
and hence a squared bound $C^k\kappa^4$. Combining this with the connected
volume estimate above gives
\[
 \int_{\{x:\,x\ {\rm connected}\}}d_k(x)\,dx
 \le
 C^k k!h^{d(k-1)}\kappa^4,
 \qquad k>M.
\]

It remains to sum these component estimates over the random auxiliary
design. Let $\mathcal C(X)$ denote the set of connected components of the
observed design graph, and let $X_C$ be the tuple of design points in a
component $C$. Conditional on the design, the response laws under the two
signs are products over the components. Hellinger distance for product
measures therefore gives
\[
 H^2\bigl(
   P_+(\,\cdot\,\mid X),
   P_-(\,\cdot\,\mid X)
 \bigr)
 \le
 \sum_{C\in\mathcal C(X)}d_{|C|}(X_C).
\]
Averaging over the common design law and then enlarging the sum from actual
components to all connected subsets gives
\begin{align}
 H^2(\overline P_+^{\rm aux},\overline P_-^{\rm aux})
 &\le
 \E_{\mathsf D}
 \sum_{C\in\mathcal C(X)}d_{|C|}(X_C)
 \notag\\
 &\le
 \sum_{k\ge1}\frac1{k!}
 \int_{\{x:\,x\ {\rm connected}\}}
 d_k(x)\rho_{\mathsf D}^{(k)}(x)\,dx
 \notag\\
 &\le
 \sum_{k\ge1}
 \frac{(C\nu)^k}{k!}
 \int_{\{x:\,x\ {\rm connected}\}}
 d_k(x)\,dx.
 \label{eq:component-factorial-bridge}
\end{align}
The factor $1/k!$ converts ordered factorial tuples into unordered subsets.
The last inequality uses \eqref{eq:design-factorial-density}.

Set
\[
 \tau=\nu h^d.
\]
The coefficient-error contribution starts at $k=2$, since $d_1=0$. Using
\eqref{eq:component-bound} and $C\tau\le1/2$,
\[
 \sum_{k=2}^M
 C^k\nu^k h^{d(k-1)}\Delta^2\xi
 \le
 C\nu^2h^d\Delta^2\xi.
\]
The response-remainder contribution begins at $k=R+1$ and is bounded by
\[
 C\nu\kappa^{4R+4}
 \sum_{k=R+1}^{M}(C\tau)^{k-1}
 \le
 C\nu\kappa^{4R+4}(C\tau)^R.
\]
This sum is empty when $R\ge M$. Finally, the contribution from
components larger than $M$ is
\[
 C\nu\kappa^4
 \sum_{k=M+1}^\infty(C\tau)^{k-1}
 \le
 C\nu\kappa^4(C\tau)^M.
\]
Consequently,
\begin{equation}\label{eq:aux-bound}
 \begin{aligned}
 H^2(\overline P_+^{\rm aux},\overline P_-^{\rm aux})
 \le
 C\bigl\{
 &\nu^2h^d\Delta^2\xi
 +\nu\kappa^{4R+4}(C\nu h^d)^R\\
 &+\nu\kappa^4(C\nu h^d)^M
 \bigr\}.
 \end{aligned}
\end{equation}

\leavevmode\par\medskip\noindent
\emph{Step 4: Normalizing the design density.}
The auxiliary experiment uses the unnormalized density factor
$\widetilde p$ and conditional intensity scale $\nu a$. We now replace it,
under the same common tilted state law, by the normalized density
\[
 p(x)=\frac{\widetilde p(x)}{m}
\]
and the marked Poisson intensity
\[
 \nu p(x)q_\pm(y\mid x)\,dx.
\]
Because $\widetilde p$ is uniformly bounded above and below and
$m=1+O(\eta^2)$, choosing the fixed constant $\eta$ sufficiently small
ensures that every resulting $p$ lies between the prescribed lower and
upper bounds of the model class. The regression function, conditional
variance, and conditional response law are unchanged, so the smoothness and
fourth-moment bounds verified in Step~1 remain valid.

For a fixed density and function state, the squared $L^2$ distance between
the square roots of the auxiliary and normalized marked intensities is
\begin{align*}
 &\int_{[0,1]^d}
 \sum_{y\in\mathcal Y}
 \left[
   \sqrt{\nu a\,\widetilde p(x)q_\pm(y\mid x)}
   -
   \sqrt{\nu\,\frac{\widetilde p(x)}m q_\pm(y\mid x)}
 \right]^2dx\\
 &\qquad=
 \nu\bigl(\sqrt{am}-1\bigr)^2.
\end{align*}
Indeed, $\sum_yq_\pm(y\mid x)=1$ and
$\int\widetilde p=m$. Since
\[
 a\,\E_{\pi_{\nu a}}m=1,
\]
we have
\[
 \bigl(\sqrt{am}-1\bigr)^2
 \le
 (am-1)^2
 =
 a^2\bigl(m-\E_{\pi_{\nu a}}m\bigr)^2.
\]
The Hellinger formula for Poisson processes and convexity of squared
Hellinger distance under mixing therefore give, for either sign,
\[
 H^2(
   \overline P_\pm^{\rm aux},
   \overline P_\pm^{\rm Pois}
 )
 \le
 C\nu a^2
 \operatorname{Var}_{\pi_{\nu a}}(m).
\]
Using \eqref{eq:tilted-variance} and the deterministic bound on $a$,
\begin{equation}\label{eq:normalization-cost}
 H^2(
   \overline P_\pm^{\rm aux},
   \overline P_\pm^{\rm Pois}
 )
 \le
 C\nu h^d\eta^4.
\end{equation}

Finally, apply the triangle inequality for Hellinger distance to
\[
 \overline P_+^{\rm Pois},
 \qquad
 \overline P_+^{\rm aux},
 \qquad
 \overline P_-^{\rm aux},
 \qquad
 \overline P_-^{\rm Pois},
\]
and use $(a+b+c)^2\le3(a^2+b^2+c^2)$. Combining
\eqref{eq:aux-bound} and \eqref{eq:normalization-cost}, and recalling
$\Delta^2=\kappa^4\delta^2$, gives
\[
 \begin{aligned}
 H^2(
   \overline P_+^{\rm Pois},
   \overline P_-^{\rm Pois}
 )
 \le
 C\bigl\{
 &\nu^2h^d\kappa^4\delta^2\xi
 +\nu\kappa^{4R+4}(C\nu h^d)^R +\nu\kappa^4(C\nu h^d)^M
 +\nu h^d\eta^4
 \bigr\},
 \end{aligned}
\]
which is \eqref{eq:global-budget}.

Every reweighting and normalization above is performed at the level of the
prior before the sample is drawn. In particular, the exponential tilt is
the same under the two priors and depends only on the latent density state,
not on the realized Poisson count or on any observed data. This completes
the proof.
\end{proof}

\appendix

\section{Elementary interpolation at a fixed count}\label{sec:simple-geometry}

This section constructs the interpolation matrices used in
\zcref{lem:local}. Recall that their separated representation cost is
defined in \eqref{eq:coefficient-kernel-cost}. For a configuration of
$m$ design points, we will construct a positive semidefinite matrix whose
evaluation in the smooth family $\phi$ is diagonal. The diagonal entry at
a point is either retained or reduced according to the distances from that
point to the other design points. Retaining more configurations improves
this diagonal approximation but increases the coefficients needed to
represent the matrix. At fixed $m$, the logarithmic losses resulting from
this tradeoff are sufficient for the fixed-order lower bound.

Write
\[
 L_N=1+\log N.
\]

\begin{lemma}\label{lem:simple-geometry}
Fix a bounded cube $Q\subset\mathbb R^d$ and an integer $M\ge2$.
There is a finite family $\phi$ of smooth real-valued functions, depending
on $Q$ and $M$ but not on $m$ or $N$, with the following property. For
every $2\le m\le M$ and $N\ge2$, there is a positive semidefinite
coefficient matrix
\[
 E_{m,N}(\mathbf U),
 \qquad
 \mathbf U=(U_1,\ldots,U_m)\in Q^m,
\]
which is invariant under relabeling the design points and satisfies
\begin{align}
 \phi(U_i)^T E_{m,N}(\mathbf U)\phi(U_j)
 &=
 w_i(\mathbf U)\mathbf1_{\{i=j\}},
 \qquad
 0\le w_i(\mathbf U)\le1,
 \label{eq:simple-diag}\\
 \int_{Q^m}
 \sum_{i=1}^m\bigl(1-w_i(\mathbf U)\bigr)\,d\mathbf U
 &\le
 C_MN^{-d}L_N^{m-2},
 \label{eq:simple-tail}\\
 \operatorname{cost}(E_{m,N})
 &\le
 C_MN^2L_N^{m-2}.
 \label{eq:simple-cost}
\end{align}
Here the last quantity is the separated coefficient-kernel cost from
\eqref{eq:coefficient-kernel-cost}.

The family $\phi$ contains a function equal to one on $Q$ and has smooth
compactly supported extensions to $\mathbb R^d$. In particular, for every
fixed open neighborhood $\mathcal O\supset Q$ and every $s>0$,
\[
 \sum_a\|\phi_a\|_{C^s(\mathcal O)}
 \le C_{M,s}.
\]
The construction depends only on the geometry of the design points and
uses no smoothness assumption on the design density.
\end{lemma}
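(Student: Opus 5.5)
The plan is to build $E_{m,N}$ as a sum of rank-one matrices $E=\sum_{i}\chi_i(\mathbf U)\,c_i c_i^T$. Here $c_i$ is the coefficient vector, in the family $\phi$, of a cardinal polynomial $P_i$ with $P_i(U_i)=1$ and $P_i(U_\ell)=0$ for $\ell\neq i$, and $\chi_i\in[0,1]$ is a cutoff. Taking $\phi$ to contain the monomials $x^\gamma$ with $|\gamma|\le M-1$ (times a fixed smooth cutoff equal to one on $Q$, so that compactly supported extensions exist) gives
\[
 \phi(U_i)^T E\phi(U_j)=\sum_\ell\chi_\ell P_\ell(U_i)P_\ell(U_j)=\chi_i\mathbf 1_{\{i=j\}}.
\]
This matrix is positive semidefinite, symmetric under relabeling if $\chi$ and $P$ are defined equivariantly, and has $w_i=\chi_i$. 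The family is finite and depends only on $Q$ and $M$, so the $C^s$ summability is immediate.

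For the cardinal polynomials I would use products of linear factors. For each $\ell\neq i$ I take the affine function $\lambda_{i\ell}(x)=\langle x-U_\ell,U_i-U_\ell\rangle/\|U_i-U_\ell\|^2$, which equals one at $U_i$ and zero at $U_\ell$, and set $P_i=\prod_{\ell\neq i}\lambda_{i\ell}$. This has degree $m-1\le M-1$. Its monomial coefficients are bounded by $C_M\prod_{\ell\neq i}\|U_i-U_\ell\|^{-1}$, so the rank-one term has coefficient size about $\prod_{\ell\neq i}\|U_i-U_\ell\|^{-2}$. I would not use a sharp cutoff. Instead $\chi_i$ would be a dyadic partition: decompose each distance $\|U_i-U_\ell\|$ into scales $2^{-r}$ and keep only terms where $\prod_{\ell}\|U_i-U_\ell\|\gtrsim N^{-1}$, so the product of inverse distances is at most $N$. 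The retained region must also exclude configurations where some single distance is below $N^{-1}$. Set $\chi_i=1$ when all $\|U_i-U_\ell\|\ge$ thresholds, with a product over $\ell$ of scale-indicators. Each $a_\ell(U_\ell)$ factor then appears separately once the scale of $U_i$ and its coefficients are fixed in the integral representation \eqref{eq:coefficient-kernel-cost}.

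To get a separated representation, I would expand $P_i(x)P_i(x')$. Each linear factor $\lambda_{i\ell}$ depends jointly on $U_i$ and $U_\ell$. I would separate this by discretizing: condition on the cells of a dyadic grid containing $U_i$ and $U_\ell$ at the relevant scale, Taylor-expand $1/\|U_i-U_\ell\|^2$ and the inner product, or write $\lambda_{i\ell}$ via Fourier or bilinear expansions in cell-relative coordinates with bounded factors. The cost of each retained dyadic scale tuple is then $\prod_\ell 2^{2r_\ell}$ subject to $\prod_\ell 2^{r_\ell}\le N$. Summing over tuples $(r_\ell)_{\ell\neq i}$ with $\sum r_\ell\le\log_2N$ gives $N^2$ times the number of tuples near the boundary, which is $O(L_N^{m-2})$. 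That yields \eqref{eq:simple-cost}.

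For \eqref{eq:simple-tail}, the defect $1-\chi_i$ is supported where $\prod_{\ell\neq i}\|U_i-U_\ell\|\lesssim N^{-1}$. The volume of $\{(y_\ell)\in\mathbb R^{d(m-1)}:\prod|y_\ell|\le N^{-1}\}$ within the unit ball is, in dyadic shells, about $\sum 2^{-d\sum r_\ell}$ over $\sum r_\ell\ge\log_2 N$. This is $N^{-d}L_N^{m-2}$, which gives the tail bound.

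I expect the main obstacle to be making the separated representation honest, with each design point entering only through a bounded factor $a_i(U_i;\zeta)$ while the product cutoff $\chi_i$ and the factors $\lambda_{i\ell}$ couple pairs. My first attempt would be to cover the scale-$2^{-r}$ pair relation by products of cell indicators. Inside each pair of cells $\lambda_{i\ell}$ is analytic in the relative coordinates, so a convergent tensor expansion with geometric coefficient decay separates it at a constant factor per pair. Summing over neighboring cell pairs costs only $C^{m}$, which is absorbed into $C_M$.
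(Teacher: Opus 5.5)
Most of your plan coincides with the paper's proof. Your factor $\lambda_{i\ell}$ is exactly the paper's $L_{U_i,U_\ell}$. The matrix is a sum of weighted rank-one terms $W_{i,\mathbf j}p_ip_i^T$ over a smooth dyadic partition of the neighbor distances. Shell tuples are retained when $\sum_\ell j_\ell\le K$ with $2^K\asymp_M N$, and the same binomial counts give $N^2L_N^{m-2}$ and $N^{-d}L_N^{m-2}$.

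The gap is in the step you flag as the main obstacle. The cost in \eqref{eq:coefficient-kernel-cost} is the total mass $\int\sum_{a,b}|A_{ab}(\zeta)|\,d\sigma(\zeta)$ over all parameters. Since $|\prod_i a_i|\le1$, the part of a representation carried by one cell pair costs at least the supremum of $\sum_{a,b}|E_{ab}|$ on that cell pair. To cover the relation $|U_i-U_\ell|\asymp2^{-r}$ over $Q\times Q$ by products of cell indicators at scale $2^{-r}$, you need about $2^{rd}$ cell pairs. Your claim that ``summing over neighboring cell pairs costs only $C^m$'' counts the $O(1)$ neighbors of one cell but not the $2^{rd}$ positions of that cell. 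On a positive fraction of these pairs the coefficient matrix has $\ell^1$ size about $2^{2r}$, since the linear coefficients of $\lambda_{i\ell}$ already have size $|U_i-U_\ell|^{-1}$. So the cell-based separation costs about $2^{r(d+2)}$ per pair. After summing over retained tuples you get order $N^{d+2}$ instead of $N^2$. That destroys \eqref{eq:simple-cost}, which is precisely the bound that later forces $\delta\asymp N^{-2}$.

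The missing idea is to separate without any spatial localization, by Fourier inversion in rescaled pair coordinates. Take $h=2^{-r}$, $x=U_i$ and $v=(U_\ell-U_i)/h$. Every coefficient of $h^2\rho_r(U_\ell-U_i)\lambda_{i\ell}(t)\lambda_{i\ell}(t')$, multiplied by a fixed smooth cutoff in $x$, is then a smooth compactly supported function of $(x,v)$ whose derivatives are bounded independently of $r$. This is where the shell partition must be smooth rather than built from indicators. Its Fourier transform in $(x,v)$ therefore has $L^1$ norm $O(1)$. Each character splits as $e^{i(\xi\cdot x+\zeta\cdot v)}=e^{i(\xi-\zeta/h)\cdot U_i}e^{i(\zeta/h)\cdot U_\ell}$ into unimodular factors, which gives cost $C2^{2r}$ per pair with no volume factor. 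Multiplying these representations over the neighbors of one center gives $C^q4^{J}$ per tuple; the shared factors in $U_i$ multiply to something of modulus at most one. Your counting then finishes the proof. You should also treat coincident configurations, since \eqref{eq:simple-diag} is asserted for every $\mathbf U\in Q^m$, not just almost every one.
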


\begin{proof}
A fixed affine change of variables changes only constants depending on
$Q$, so we may assume
\[
 \operatorname{diam}(Q)\le1.
\]
We first divide the possible distances between two design points into
dyadic shells. Choose a nonincreasing smooth radial function
$\psi:\mathbb R^d\to[0,1]$ that equals one on the unit ball and zero
outside the ball of radius two, and define
\[
 \rho_j(v)
 =
 \psi(2^jv)-\psi(2^{j+1}v),
 \qquad
 j=0,1,\ldots.
\]
For every $0<|v|\le1$,
\[
 \rho_j(v)\ge0,
 \qquad
 \sum_{j\ge0}\rho_j(v)=1.
\]
Moreover,
\[
 \rho_j(v)\ne0
 \quad\Longrightarrow\quad
 2^{-j-1}\le |v|\le2^{1-j}.
\]
Thus the index $j$ records the distance scale $|v|\asymp2^{-j}$.

We next construct a linear polynomial that separates one design point from
one neighbor. For $x\ne y$, define
\[
 L_{x,y}(t)
 =
 1-\frac{(y-x)\cdot(t-x)}{|y-x|^2}.
\]
Then
\[
 L_{x,y}(x)=1,
 \qquad
 L_{x,y}(y)=0.
\]
If $\rho_j(y-x)\ne0$, then $|y-x|\asymp2^{-j}$. Writing a polynomial in
$t$ in the usual monomial basis and denoting by $\|\cdot\|_{\ell^1}$
the sum of the absolute values of its coefficients, we therefore have
\[
 \|L_{x,y}\|_{\ell^1}\le D2^j
\]
for a constant $D$ depending only on $Q$ and $d$.

We also need a separated representation of the coefficient matrix of
\[
 \rho_j(y-x)L_{x,y}(t)L_{x,y}(t').
\]
We claim that
\begin{equation}\label{eq:simple-unary}
 \operatorname{cost}
 \left\{
   \rho_j(y-x)L_{x,y}(t)L_{x,y}(t')
 \right\}
 \le C2^{2j}.
\end{equation}
Here the variables $x$ and $y$ are treated as two separate design
arguments in the sense of \eqref{eq:coefficient-kernel-cost}.

To prove \eqref{eq:simple-unary}, set
\[
 h=2^{-j},
 \qquad
 v=\frac{y-x}{h}.
\]
On the support of $\rho_j(y-x)$, the vector $v$ belongs to the fixed
annulus
\[
 \frac12\le |v|\le2.
\]
Moreover,
\[
 hL_{x,y}(t)
 =
 h+\frac{v\cdot x}{|v|^2}
   -\frac{v\cdot t}{|v|^2}.
\]
Hence, after multiplying by
\[
 \rho_j(y-x)=\psi(v)-\psi(2v)
\]
and by a fixed smooth cutoff in $x$ that equals one on $Q$, every
coefficient of
\[
 h^2\rho_j(y-x)L_{x,y}(t)L_{x,y}(t')
\]
is a smooth compactly supported function of $(x,v)$ with derivative bounds
independent of $j$.

Its Fourier transform in $(x,v)$ therefore has uniformly bounded
$L^1$ norm: integrate by parts more than $2d$ times. Fourier inversion
gives a representation by characters
\[
 e^{i(\xi\cdot x+\zeta\cdot v)}
 =
 e^{i(\xi-\zeta/h)\cdot x}
 e^{i(\zeta/h)\cdot y}.
\]
The first factor depends only on the design variable $x$ and the second
only on $y$, and both have absolute value one. Thus Fourier inversion is
a separated representation of the required type. Passing to real and
imaginary parts changes the cost only by a fixed constant. Finally,
undoing the factor $h^2$ contributes
\[
 h^{-2}=2^{2j},
\]
which proves \eqref{eq:simple-unary}.

We now construct a cardinal polynomial for one point in an
$m$-point configuration. Fix a label $i$ and put
\[
 q=m-1.
\]
For each neighbor $r\ne i$, choose a shell index $j_r\ge0$ and write
\[
 J=\sum_{r\ne i}j_r.
\]
Define
\[
 W_{i,\mathbf j}(\mathbf U)
 =
 \prod_{r\ne i}
 \rho_{j_r}(U_r-U_i)
\]
and
\[
 P_i(t)
 =
 \prod_{r\ne i}L_{U_i,U_r}(t).
\]
Since each factor in $P_i$ equals one at $U_i$ and vanishes at its
corresponding neighbor,
\[
 P_i(U_r)=\mathbf1_{\{i=r\}},
 \qquad
 r=1,\ldots,m.
\]
Thus $P_i$ is a cardinal polynomial for the point $U_i$.

Let $p_i(\mathbf U)$ denote the coefficient vector of $P_i$ in the
monomial basis. On the region where
$W_{i,\mathbf j}(\mathbf U)\ne0$, the preceding coefficient bound gives
\[
 \|p_i(\mathbf U)\|_{\ell^1}
 \le
 D^q2^J.
\]
Multiplying the separated representations from
\eqref{eq:simple-unary}, one for each neighbor of $U_i$, gives
\begin{equation}\label{eq:simple-block}
 \operatorname{cost}
 \left\{
   W_{i,\mathbf j}(\mathbf U)
   p_i(\mathbf U)p_i(\mathbf U)^T
 \right\}
 \le
 C^q4^J.
\end{equation}
The shared center $U_i$ causes no difficulty: the factors depending on
$U_i$ in the individual separated representations all have absolute value
at most one, so their product does as well. For the polynomial
coefficients we use the elementary submultiplicative bound
\[
 \|PQ\|_{\ell^1}
 \le
 \|P\|_{\ell^1}\|Q\|_{\ell^1}.
\]
Thus \eqref{eq:simple-block} is a separated representation in all $m$
design variables.

All cardinal polynomials arising in this construction have total degree at
most $m-1\le M-1$. We can therefore place their coefficient vectors in one
fixed finite smooth family. Choose a smooth compactly supported cutoff
$\chi$ that equals one on a neighborhood of $Q$, and let
\[
 \phi_\beta(t)=\chi(t)t^\beta,
 \qquad
 |\beta|\le M-1.
\]
On $Q$, these functions agree with the corresponding monomials, so the
coefficient vectors $p_i$ and all evaluations of the cardinal polynomials
are unchanged. The family also contains $\phi_0=\chi$, which equals one
on $Q$.

We now decide which dyadic configurations to retain. Put
\[
 K
 =
 \left\lfloor
 \log_2\frac{N}{mD^q}
 \right\rfloor
\]
and define
\[
 E_{m,N}(\mathbf U)
 =
 \sum_{i=1}^m
 \sum_{\mathbf j:\,J\le K}
 W_{i,\mathbf j}(\mathbf U)
 p_i(\mathbf U)p_i(\mathbf U)^T,
\]
where the inner sum is interpreted as zero if $K<0$.

Each summand is a nonnegative scalar times a rank-one positive
semidefinite matrix, so $E_{m,N}$ is positive semidefinite. Evaluating at
two design points and using the cardinal property gives
\[
 \phi(U_i)^TE_{m,N}(\mathbf U)\phi(U_\ell)
 =
 w_i(\mathbf U)\mathbf1_{\{i=\ell\}},
\]
where
\[
 w_i(\mathbf U)
 =
 \sum_{\mathbf j:\,J\le K}
 W_{i,\mathbf j}(\mathbf U).
\]
Because the functions $\rho_j$ form a partition of unity for every
nonzero neighbor difference,
\[
 0\le w_i(\mathbf U)\le1.
\]
At configurations with coincident design points, we retain the same
localized summands, interpreting each coefficient kernel
$W_{i,\mathbf j}p_i p_i^T$ by the smooth extension supplied by the
Fourier representation in \eqref{eq:simple-block}. If a neighbor
coincides with its center, the corresponding shell cutoff vanishes in a
neighborhood of that coincidence, so the summand extends there by zero.
Coincidences between neighbors away from the center retain their ordinary
values. Define $w_i$ by the same retained partition-weight sum above at
every configuration. A repeated location has zero weight when used as a
center, while every retained cardinal polynomial centered elsewhere
vanishes at that location. Hence \eqref{eq:simple-diag} holds pointwise
also at coincident configurations.

The construction is also invariant under relabeling the design points.
For a fixed center, the product
$W_{i,\mathbf j}$ and the condition
$\sum_{r\ne i}j_r\le K$ are symmetric in its neighbors. Summing over all
choices of the center therefore makes the full matrix invariant under every
permutation of the $m$ labels.

It remains to bound the representation cost and the amount of diagonal
mass that was discarded by the cutoff $J\le K$. There are
\[
 \binom{J+q-1}{q-1}
\]
$q$-tuples of nonnegative shell indices with total $J$. For fixed $q$ and
$K\ge0$,
\begin{align}
 \sum_{J=0}^K
 4^J\binom{J+q-1}{q-1}
 &\le
 C_q4^K(1+K)^{q-1},
 \label{eq:simple-sum-up}\\
 \sum_{J>K}
 2^{-dJ}\binom{J+q-1}{q-1}
 &\le
 C_{q,d}2^{-dK}(1+K)^{q-1}.
 \label{eq:simple-sum-down}
\end{align}
For \eqref{eq:simple-sum-up}, use
\[
 \binom{J+q-1}{q-1}
 \le
 C_q(1+K)^{q-1},
 \qquad
 J\le K,
\]
and sum the geometric series in $4^J$. For
\eqref{eq:simple-sum-down}, write $J=K+r$ and use
\[
 (1+K+r)^{q-1}
 \le
 (1+K)^{q-1}(1+r)^{q-1};
\]
the remaining series
$\sum_{r\ge1}2^{-dr}(1+r)^{q-1}$ is finite.

We first prove the cost bound. Combining
\eqref{eq:simple-block} with \eqref{eq:simple-sum-up}, and then summing
over the $m$ possible centers, gives
\[
 \operatorname{cost}(E_{m,N})
 \le
 C_M4^K(1+K)^{m-2}.
\]
When $K\ge0$, the definition of $K$ and the fact that
$mD^{m-1}$ is bounded in terms of $M$ give
\[
 2^K\asymp_M N,
 \qquad
 1+K\le C_M(1+\log N)=C_ML_N.
\]
Therefore
\[
 \operatorname{cost}(E_{m,N})
 \le
 C_MN^2L_N^{m-2},
\]
which is \eqref{eq:simple-cost}.

We next bound the integrated diagonal defect. For almost every
configuration, the full shell partition gives
\[
 1-w_i(\mathbf U)
 =
 \sum_{\mathbf j:\,J>K}
 W_{i,\mathbf j}(\mathbf U).
\]
Fixing the center $U_i$, the neighbor variables occur separately in
$W_{i,\mathbf j}$. Since the support of $\rho_j$ has volume at most
$C2^{-dj}$ and $0\le\rho_j\le1$,
\[
 \int_Q
 \rho_j(U_r-U_i)\,dU_r
 \le
 C2^{-dj}.
\]
Hence
\[
 \int_{Q^m}
 \sum_{i=1}^m
 \bigl(1-w_i(\mathbf U)\bigr)\,d\mathbf U
 \le
 C_M
 \sum_{J>K}
 2^{-dJ}
 \binom{J+q-1}{q-1}.
\]
Using \eqref{eq:simple-sum-down} and again
$2^K\asymp_MN$ gives
\[
 \int_{Q^m}
 \sum_{i=1}^m
 \bigl(1-w_i(\mathbf U)\bigr)\,d\mathbf U
 \le
 C_MN^{-d}L_N^{m-2},
\]
which is \eqref{eq:simple-tail}.

If $K<0$, then
\[
 N<mD^q,
\]
so, because $m\le M$, the value of $N$ is bounded by a constant depending
only on $M$. Taking $E_{m,N}=0$ and $w_i=0$ then satisfies
\eqref{eq:simple-diag}--\eqref{eq:simple-cost} after increasing $C_M$.

Finally, the family $\phi$ is finite and consists of smooth functions with
one fixed compact support. Hence, for every prescribed neighborhood
$\mathcal O\supset Q$ and every $s>0$,
\[
 \sum_a\|\phi_a\|_{C^s(\mathcal O)}
 \le
 C_{M,s}.
\]
For integer $s$, this uses the convention
$C^s=C^{s-1,1}$ adopted for the statistical model. This completes the
proof.
\end{proof}

\section{The interpolation estimate}\label{sec:geometry}

This appendix proves the sharper interpolation estimate needed only for the
explicit lower bound in \zcref{thm:bracket}. As in
\zcref{sec:simple-geometry}, the goal is to construct a positive
semidefinite coefficient matrix whose evaluation at the design points is
diagonal, while controlling both the integrated diagonal defect and the
separated representation cost from
\eqref{eq:coefficient-kernel-cost}. The fixed-count construction in
\zcref{sec:simple-geometry} incurs logarithmic factors that are harmless
when the observation count is fixed. Here the observation count $m$ will
grow, so we need bounds of the form $C^mN^{-d}$ and $C^mN^2$, with no
additional logarithmic loss. The construction below achieves this by
building three-point cardinal polynomials and combining them recursively in
a balanced way, so that the coefficient growth remains exponential in $m$.

\begin{lemma}\label{lem:geometry}
Let $d\ge3$. There exist a countable family
\[
 \phi=(\phi_a)_{a\in\mathcal I}
\]
of real-valued smooth functions and an open neighborhood
$\mathcal O\supset[0,1]^d$ with the following properties. The family
satisfies \eqref{eq:frame-assumption},
\[
 \sum_{a\in\mathcal I}\|\phi_a\|_{C^s(\mathcal O)}<\infty,
\]
and contains a function equal to one on $[0,1]^d$.

For every pair of integers $m,N\ge2$, there is a positive semidefinite
coefficient matrix
\[
 E_{m,N}(\mathbf U),
 \qquad
 \mathbf U=(U_1,\ldots,U_m)\in[0,1]^{dm},
\]
which is invariant under permutations of the $m$ design labels and
satisfies
\begin{align}
 \phi(U_i)^T E_{m,N}(\mathbf U)\phi(U_j)
 &=
 w_i(\mathbf U)\mathbf1_{\{i=j\}},
 \qquad
 0\le w_i(\mathbf U)\le1,
 \label{eq:cardinal}\\
 \int_{([0,1]^d)^m}
 \sum_{i=1}^m\bigl(1-w_i(\mathbf U)\bigr)\,d\mathbf U
 &\le
 C^mN^{-d},
 \label{eq:geometry-error}\\
 \operatorname{cost}(E_{m,N})
 &\le
 C^mN^2.
 \label{eq:geometry-cost}
\end{align}
Here $\operatorname{cost}(E_{m,N})$ is the separated
coefficient-kernel cost defined in
\eqref{eq:coefficient-kernel-cost}, and the constant $C<\infty$ is
independent of $m$ and $N$.

For each fixed response-degree cutoff $R$, the family $\phi$ may also be
enlarged by the Fourier functions used in
\eqref{eq:multiplier-cost-bound}. After this enlargement it still satisfies
\eqref{eq:frame-assumption} and
\[
 \sum_a\|\phi_a\|_{C^s(\mathcal O)}<\infty,
\]
with constants that may depend on $R$ but remain independent of $m$ and
$N$.
\end{lemma}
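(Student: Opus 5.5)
The construction has the same overall shape as \zcref{lem:simple-geometry}. For each center $U_i$ I build a cardinal polynomial $P_i$ that equals one at $U_i$ and vanishes at every other design point. Each $P_i$ is localized by a partition weight, and I set
\[
 E_{m,N}(\mathbf U)=\sum_i\sum_{\text{retained}}W\,p_ip_i^T .
\]
With this form, positive semidefiniteness, the diagonal identity \eqref{eq:cardinal}, the bound $0\le w_i\le1$, and permutation invariance all follow exactly as before. The only change is in how the neighbor structure is organized, so that no $\binom{J+q-1}{q-1}$ multiplicity arises.

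The logarithmic factors came from summing over all shell vectors with $J=\sum_r j_r\le K$. Instead, I make the cost of the cardinal polynomial depend only on the scale of the \emph{nearest} neighbor, up to a factor $C^m$. I proceed in three steps.

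First, I sort the neighbors of $U_i$ into dyadic shells around $U_i$. Only the nearest shell, at scale $2^{-j}$, is allowed to carry the cost $2^{2j}$. For each farther neighbor at scale $2^{-j'}$ with $j'<j$, I use a factor whose cost relative to the nearest scale is bounded.

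Second, I replace linear factors by three-point cardinal polynomials, combined along a balanced binary recursion. At each merge I take two polynomials that are cardinal for two groups of points, $P_A$ and $P_B$, each equal to one at $U_i$. I multiply by a correction polynomial of bounded degree that equals one at $U_i$ and vanishes on the points of $A$ not already killed. Degree and coefficient $\ell^1$ norms then grow like $C^{\text{depth}}$ per merge, and the total is $C^m$. To keep the polynomial degree bounded in $m$, I exploit $d\ge3$: a single quadratic such as
\[
 1-|t-U_i|^2/|y-U_i|^2
\]
vanishes on a whole sphere. Far points are handled with functions from a countable Fourier-type family, $\phi_a$ with exponentially decaying weights, rather than monomials. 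This is why the family is countable and why \eqref{eq:frame-assumption} has to be checked with weights summable in $a$.

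Third, the separated representation cost of each factor follows from the same Fourier-inversion argument used for \eqref{eq:simple-unary}. The nearest neighbor contributes $4^{j}$, and every other factor contributes $O(1)$ once rescaled to its own shell. Summing over nearest-neighbor scales $j\le K$ with $2^K\asymp N$ gives cost $C^m4^K\asymp C^mN^2$. The discarded configurations are those where some neighbor of $U_i$ lies within distance $2^{-K}\asymp N^{-1}$. Integrating over the $m-1$ possible close neighbors gives an integrated defect of at most $C^m N^{-d}$. That yields \eqref{eq:geometry-error} and \eqref{eq:geometry-cost} simultaneously, with no sum over shell vectors.

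The main obstacle is the second step: I need a cardinal polynomial (or cardinal function in the family $\phi$) whose vanishing at the far neighbors costs only $C$ per neighbor, independently of $m$ and of the configuration of far points. Far neighbors may cluster in directions where a single linear or quadratic factor cannot separate them from $U_i$ cheaply. I would first try the balanced three-point merge: pair up far neighbors recursively, and use at each step a three-point cardinal function (one at $U_i$, zero at two points) whose cost is bounded by the ratio of separation scales. I would then show by induction on the tree depth that these ratios telescope, so the product of costs is $C^m$ times the nearest-scale factor. The Fourier enlargement needed for \eqref{eq:multiplier-cost-bound} is routine, since weights $e^{-\vartheta|n|_1}$ keep all norms summable, independently of $m$ and $N$.
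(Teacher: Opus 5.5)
There is a genuine gap, and it sits at what you call the main obstacle. The target there cannot be met, so the accounting in your third step cannot be right. No construction can give full weight to every configuration in which all neighbors of $U_i$ lie at distance at least $2^{-K}\asymp N^{-1}$ while keeping $\operatorname{cost}(E_{m,N})\le C^mN^2$.

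Take a configuration containing $U_0=x$ and $U_{1,2}=x\pm\epsilon e$, with any remaining points at distance of order one. Set $v=\phi(U_1)+\phi(U_2)-2\phi(U_0)$.
\begin{itemize}
\item By \eqref{eq:cardinal}, $v^TE_{m,N}(\mathbf U)v=w_1+w_2+4w_0$.
\item By \eqref{eq:frame-assumption} and Taylor's theorem, $\sup_a|v_a|\le C\epsilon^2$.
\item The separated representation \eqref{eq:coefficient-kernel-cost} gives $\sum_{a,b}|E_{ab}(\mathbf U)|\le\operatorname{cost}(E_{m,N})$ at every $\mathbf U$.
\end{itemize}
Hence
\[
 w_0(\mathbf U)\le C\epsilon^4\operatorname{cost}(E_{m,N})\le C^m\epsilon^4N^2 .
\]
For fixed $m$ this tends to zero at $\epsilon=N^{-3/4}$, yet your rule keeps that configuration with $w_0=1$. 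Using the second divided difference at $x-\rho e,x,x+\epsilon e$ instead gives $w_0\le C(\epsilon\rho)^2\operatorname{cost}(E_{m,N})$.

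So a farther collinear neighbor at scale $\rho$ does not cost $O(1)$ relative to the nearest one, and no telescoping of ratios can occur. The argument uses only \eqref{eq:frame-assumption}, so Fourier-type functions cannot evade it. The sphere quadratic does not help either: its coefficients are of size $|y-U_i|^{-2}$, and it exists in every dimension, so it is not where $d\ge3$ matters.

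What is missing is that the discarded region must depend on angles, with its volume traded against cost.
\begin{itemize}
\item In the paper the three-point polynomial satisfies $\|P\|_{\ell^1}\le C/(a\sqrt{b^2+\sigma^2})$. This is of order $1/a$ only when the angle $\sigma$ is bounded below, and it degrades to $1/(ab)$ near collinearity.
\item The proof localizes in an angular scale $\Theta\asymp\max\{B,\sigma\}$ and in intervals of $v\cdot z'/|v|^2$, giving the packet cost \eqref{eq:packet}.
\item It discards a piece when the recursive cutoff \eqref{eq:recursive-cutoff} exceeds $T$. This criterion depends on $A$, $\Theta$, $B$ and the child cutoffs, not on the nearest distance alone.
\item The two bounds in \eqref{eq:raw-block-induction} (retained cost $K_qT^2$, discarded measure $K_qT^{-d}$) are proved jointly by induction along the balanced tree. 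The unbounded child cutoffs are controlled in $L^d$ by the layer-cake estimate \eqref{eq:child-cutoff-moment}.
\item The extra factor $B^{-\gamma}$ makes the cost series $\sum_L2^{-(2\gamma-1)L}$ and the volume series $\sum_L2^{-(d(1-\gamma)-1)L}$ converge simultaneously. This is exactly where $d\ge3$ is used: it allows $\tfrac12<\gamma<1-\tfrac1d$.
\item The balanced split then yields $K_q\le C^q$.
\end{itemize}

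Finally, the frame is countable because it must contain $\chi(t)A_0^{-|\alpha|}t^\alpha$ for every multi-index $\alpha$, not because of Fourier functions. A polynomial equal to one at the center and vanishing at $m-1$ collinear neighbors has degree at least $m-1$, so bounded degree in $m$ is not available.
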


\begin{proof}
A fixed affine rescaling changes only constants depending on the original
cube, so we may assume throughout that
\[
 \operatorname{diam}(Q)\le1.
\]
The construction below recursively multiplies cardinal polynomials. At each
internal step we will introduce an extra factor $B^{-\gamma}$ in the cutoff,
where $B\le1$ is the larger of the two selected radial scales. This makes the
sum of the retained representation costs more convergent, at the expense of
discarding more configurations. If $B=2^{-L}$, the relevant series will
decay respectively like
\[
 2^{-(2\gamma-1)L}
 \qquad\text{and}\qquad
 2^{-\{d(1-\gamma)-1\}L}.
\]
We therefore fix
\[
 \frac12<\gamma<1-\frac1d.
\]
Such a choice is possible exactly when $d>2$; for example,
$\gamma=3/5$ works for every integer $d\ge3$. All constants below may
depend on this fixed $\gamma$. We first construct and estimate the
cardinal polynomials in ordinary polynomial coordinates, and only at the
end place all of them in one fixed smooth frame.

\medskip\noindent\emph{Step 1: A three-point cardinal polynomial.}
We begin with a center and two neighbors. After translating the center to
the origin, write the neighbors as
\[
 ae,\qquad bz,
\]
where
\[
 |e|=|z|=1,\qquad a\le Cb.
\]
Set
\[
 c_\angle=e\cdot z,
 \qquad
 \sigma^2=1-c_\angle^2.
\]
Thus $\sigma$ measures the angle between the two radial directions. Define
\[
 P(t)
 =
 \frac{
 b^2(1-e\cdot t/a)(1-z\cdot t/b)
 +\sigma^2
 -\left\{
   \frac{e-c_\angle z}{a}
   +\frac{z-c_\angle e}{b}
  \right\}\cdot t
 }{b^2+\sigma^2}.
\]
Direct substitution gives
\[
 P(0)=1,\qquad P(ae)=P(bz)=0.
\]
Hence $P$ is cardinal for the center relative to these two neighbors.
Moreover,
\[
 |e-c_\angle z|
 =
 |z-c_\angle e|
 =
 \sigma.
\]
Expanding the constant, linear, and quadratic coefficients of $P$ therefore
gives
\[
 \|P\|_{\ell^1}
 \le
 \frac{C}{a\sqrt{b^2+\sigma^2}},
\]
where $\|\cdot\|_{\ell^1}$ denotes the sum of the absolute values of the
ordinary polynomial coefficients. An important feature of this estimate is
that its denominator depends only on the two radial scales and their angle,
not on the distance between the two neighbors themselves.

We next localize these quantities smoothly. Use nonnegative dyadic
partitions that restrict
\[
 a\asymp A,\qquad
 b\asymp B,
\]
and an angular partition that restricts
\[
 \max\{B,\sigma\}\asymp\Theta,
 \qquad
 \Theta\ge B.
\]
For one such choice of radial and angular scales, the coefficient matrix of
the entire product $P(t)P(t')$, multiplied by the corresponding partition
functions, has a separated representation of cost
\begin{equation}\label{eq:packet}
 C(A\Theta)^{-2}\Theta^{-1}.
\end{equation}
We now justify this estimate.

Let $U_0$ be the center and $U_1,U_2$ the two neighbors, and set
\[
 x=U_0,\qquad
 v=\frac{U_1-U_0}{A},
 \qquad
 z'=\frac{U_2-U_0}{B}.
\]
The radial partitions restrict $|v|$ and $|z'|$ to a fixed compact
annulus. To obtain a separated representation in the nearly collinear
regime, define
\[
 \alpha=\frac{v\cdot z'}{|v|^2}.
\]
Partition the bounded range of $\alpha$ into intervals of width $\Theta$,
with centers $\alpha_0$. At most $C\Theta^{-1}$ such intervals meet the
radial support. On one interval, set
\[
 u=\frac{z'-\alpha_0v}{\Theta}.
\]
Also write
\[
 r_v=|v|,
 \qquad
 r_z=|z'|,
 \qquad
 e=\frac{v}{r_v},
 \qquad
 z=\frac{z'}{r_z},
 \qquad
 u_\perp=u-(u\cdot e)e.
\]
Projecting the identity
\[
 z'=\alpha_0v+\Theta u
\]
onto the direction $e$ and its orthogonal complement gives
\begin{equation}\label{eq:angular-cancellation}
 \begin{aligned}
 z-c_\angle e
 &=\frac{\Theta}{r_z}u_\perp,
 &
 \frac{e-c_\angle z}{\Theta}
 &=
 \frac{\Theta|u_\perp|^2}{r_z^2}e
 -\frac{c_\angle}{r_z}u_\perp,\\
 \frac{z-c_\angle e}{\Theta}
 &=\frac{u_\perp}{r_z},
 &
 D:=
 \frac{b^2+\sigma^2}{\Theta^2}
 &=
 \left(\frac B\Theta\right)^2r_z^2
 +\frac{|u_\perp|^2}{r_z^2}.
 \end{aligned}
\end{equation}
In particular,
\[
 \sigma^2
 =
 \frac{\Theta^2|u_\perp|^2}{r_z^2}.
\]

The interval cutoff gives
\[
 \left|\frac{\alpha-\alpha_0}{\Theta}\right|\le C,
\]
while the angular cutoff gives $\sigma\le C\Theta$. Hence $|u|\le C$.
Multiply the partition functions by fixed smooth compactly supported
cutoffs in $x,v,u$ that equal one on the relevant supports, and by radial
cutoffs supported where
\[
 r_v,r_z\ge c_1>0.
\]
The radial cutoffs depend smoothly on
$z'=\alpha_0v+\Theta u$ with derivative bounds independent of the scales.
The angular partition may be written using smooth functions of
\[
 \frac{\sigma^2}{\Theta^2}
 =
 \frac{|u_\perp|^2}{r_z^2},
\]
and the interval partition using smooth functions of
\[
 \frac{\alpha-\alpha_0}{\Theta}
 =
 \frac{v\cdot u}{|v|^2}.
\]
Thus all cutoffs have fixed compact support and uniformly bounded
derivatives in $(x,v,u)$, independently of $A,B,\Theta$, and $\alpha_0$.

On a genuine angular annulus, $\sigma/\Theta$ is bounded below by a fixed
positive constant. In the final angular bin, $\Theta\asymp B$. In either
case, \eqref{eq:angular-cancellation} implies
\[
 D\ge c>0.
\]
After normalizing the cardinal polynomial, a direct expansion gives
\begin{equation}\label{eq:normalized-cardinal-coefficients}
 \begin{aligned}
 A\Theta P(t)
 &=
 A\Theta
 -D^{-1}\mathcal V\cdot t
 +D^{-1}\frac B\Theta\frac{r_z}{r_v}
      (e\cdot t)(z\cdot t),\\
 \mathcal V
 &=
 \frac{B^2r_z^2}{\Theta r_v}e
 +\frac{ABr_z}{\Theta}z
 +\frac1{r_v}\frac{e-c_\angle z}{\Theta}
 +\frac{A}{Br_z}\frac{z-c_\angle e}{\Theta}.
 \end{aligned}
\end{equation}
All scalar ratios appearing here are uniformly bounded:
\[
 \frac AB\le C,
 \qquad
 \frac B\Theta\le1,
 \qquad
 A,B,\Theta\le C.
\]
Together with \eqref{eq:angular-cancellation} and the lower bounds on
$r_v,r_z$, and $D$, this shows that every coefficient of
$A\Theta P(t)$ has derivatives of every fixed order that are uniformly
bounded in $(x,v,u)$. The same is therefore true for every coefficient of
\[
 (A\Theta)^2P(t)P(t').
\]
In particular, no negative power of the angular scale remains after the
normalization.

Let $f_\mu(x,v,u)$ denote any coefficient of this normalized covariance
polynomial after multiplication by the cutoffs for one interval. For every
fixed integer $J$,
\begin{equation}\label{eq:uniform-fourier-derivatives}
 \sum_{|\alpha|\le2J}
 \|\partial^\alpha f_\mu\|_{L^1(\mathbb R^{3d})}
 \le C_J,
 \qquad
 \|\widehat f_\mu\|_{L^1(\mathbb R^{3d})}
 \le C_J
 \quad\text{if }2J>3d.
\end{equation}
For the second estimate, apply $(1-\Delta)^J$ before Fourier inversion
and use the integrability of
$(1+|\xi|^2)^{-J}$.

For fixed $A,B,\Theta$, and $\alpha_0$, the variables $(x,v,u)$ are
linear combinations of the three design points. Hence every Fourier
character separates into a product of three functions, one of each design
point, all with absolute value one. Fourier inversion therefore gives a
separated representation. There are only finitely many polynomial
coefficients, and passing from complex characters to real and imaginary
parts changes the cost only by a fixed factor. Restoring the normalization
$(A\Theta)^{-2}$ and summing over the at most $C\Theta^{-1}$ interval
centers proves \eqref{eq:packet}.

\medskip\noindent\emph{Step 2: Balanced recursive multiplication.}
We now construct a cardinal polynomial for one center and an arbitrary
number of neighbors. Fix a center $x$ and $q$ labeled neighbors. First
assign a radial dyadic shell to every neighbor, using the same nonnegative
partition throughout the recursion. Among these assigned shells, choose the
two smallest radial scales, breaking ties by the labels. Denote these two
scales by
\[
 A\le CB,
 \qquad
 B=2^{-L}.
\]
The three-point construction in Step~1 gives a polynomial that equals one at
the center and zero at these two selected neighbors.

Split the remaining $q-2$ neighbors, using only their labels, into two
groups of sizes
\[
 q_1+q_2=q-2,
 \qquad
 |q_1-q_2|\le1.
\]
Multiply the three-point polynomial by recursively constructed cardinal
polynomials for these two groups, all with the same center $x$. The recursion
therefore remains balanced: each child contains at most about half of the
remaining neighbors.

We attach to every recursively constructed polynomial a dyadic number
$\tau$ that bounds its ordinary coefficient $\ell^1$ norm. For no neighbors,
use the constant polynomial $1$ and set
\[
 \tau_0=1.
\]
For one neighbor $y$ at assigned radial scale $A$, use
\[
 L_{x,y}(t)
 =
 1-\frac{(y-x)\cdot(t-x)}{|y-x|^2},
 \qquad
 L_{x,y}(x)=1,
 \qquad
 L_{x,y}(y)=0,
\]
and choose $\tau$ to be a dyadic number at least $C/A$.

At an internal node, let $\tau_1,\tau_2$ be the cutoffs of the two child
polynomials and let $\Theta$ be the angular scale of the selected
three-point polynomial. Define, with upward dyadic rounding,
\begin{equation}\label{eq:recursive-cutoff}
 \tau
 =
 C(A\Theta)^{-1}B^{-\gamma}\tau_1\tau_2.
\end{equation}
The three-point coefficient bound is $C(A\Theta)^{-1}$, while polynomial
multiplication is submultiplicative in coefficient $\ell^1$ norm. Since
$B^{-\gamma}\ge1$, the recursively constructed polynomial has coefficient
norm at most $\tau$. It is cardinal at the center and at every neighbor,
and its total degree is at most $q$.

All shell, angular, and interval partitions used in this recursion are
nonnegative and sum to one. The selection of the two largest shell
\emph{indices} (equivalently, the two smallest radial scales) and the subsequent split by labels are determined by the
discrete partition indices, not by an additional discontinuous ordering of
the actual design locations. Thus discarding selected partition indices
later simply removes nonnegative terms from one smooth partition.

We now formulate the two estimates needed from the recursion. A partition
index
\[
 \beta\in\mathcal B_q
\]
records every radial shell choice and, at every internal node, the angular
bin and interval center used in Step~1. For such an index, let
\[
 W_\beta(x,\mathbf U)\ge0
\]
be the product of all its smooth partition functions,
let
\[
 p_\beta(x,\mathbf U)
\]
be the ordinary coefficient vector, in powers of $t-x$, of its cardinal
polynomial, and let $\tau_\beta$ be the dyadic cutoff obtained from
\eqref{eq:recursive-cutoff}. Finally, let $c_\beta$ be the cost of the
specific separated representation of
\[
 W_\beta\,p_\beta p_\beta^T
\]
obtained by multiplying the representations from Step~1. The center $x$ is
included as one of the design variables in this representation.

Away from coincident configurations,
\[
 \sum_{\beta\in\mathcal B_q}W_\beta(x,\mathbf U)=1,
 \qquad
 \|p_\beta(x,\mathbf U)\|_{\ell^1}\le\tau_\beta
 \quad\text{whenever }W_\beta(x,\mathbf U)>0.
\]
We claim that there are constants $K_q\ge1$ such that, for every $T>0$,
\begin{equation}\label{eq:raw-block-induction}
 \begin{aligned}
 \sum_{\beta:\,\tau_\beta\le T}c_\beta
 &\le K_qT^2,\\
 \sup_{x\in Q}
 \left|
 \left\{
 \mathbf U\in Q^q:
 \begin{array}{c}
 \text{there exists }\beta\in\mathcal B_q
 \text{ with }W_\beta(x,\mathbf U)>0\\[-1mm]
 \text{and }\tau_\beta>T
 \end{array}
 \right\}
 \right|
 &\le K_qT^{-d}.
 \end{aligned}
\end{equation}
The vertical bars denote $dq$-dimensional Lebesgue measure. The first
estimate controls the total representation cost of the partition pieces
whose coefficient cutoff is at most $T$. The second controls the measure of
configurations on which at least one positive-weight partition piece has
cutoff larger than $T$. We use a union event rather than an average of the
discarded weights; this stronger statement will make the final diagonal
defect estimate immediate.

Both conclusions are preserved if some partition indices are removed:
the first sum only decreases, and the event in the second line only
shrinks.

The cases $q=0$ and $q=1$ are immediate. For $q=0$, the only polynomial
is $1$ with cutoff one. For $q=1$, a shell of radial scale $A$ has
representation cost at most $CA^{-2}$ by
\eqref{eq:simple-unary} and cutoff comparable to $A^{-1}$. Summing the
shells with $C/A\le T$ gives a bound $CT^2$. If a positive-weight shell
has $C/A>T$, then the neighbor lies within distance $C/T$ of the center,
whose volume is at most $CT^{-d}$. When $T<1$, the retained cost sum is
empty and the volume bound is trivial after increasing the constant.

We will also use a deterministic upper bound on the recursive cutoff. If
every radial scale appearing in a $q$-neighbor construction is at least
$2^{-L}$, then
\begin{equation}\label{eq:raw-cutoff-maximum}
 \tau_\beta
 \le
 C^q2^{CqL}.
\end{equation}
Indeed, at each internal node the logarithm of
\eqref{eq:recursive-cutoff} increases by at most a fixed multiple of
$1+L$, and every internal node consumes at least one previously unused
neighbor. Summing over the whole recursion therefore gives a bound of
order $q(1+L)$ for $\log_2\tau_\beta$; the depth of the balanced tree does
not create an additional factor.

We now prove \eqref{eq:raw-block-induction} by induction on $q$.
Consider an internal node with child sizes $q_1,q_2$. There are at most
$q^2$ choices for the two shell-labeled neighbors selected at the parent.
Fix the second selected radial scale
\[
 B=2^{-L}.
\]
By \eqref{eq:raw-cutoff-maximum}, a child with $q_i$ neighbors can occupy
at most
\[
 C(q_i+1)(L+1)
\]
dyadic cutoff intervals. Fix child intervals
\[
 [S_1,2S_1),
 \qquad
 [S_2,2S_2).
\]
By the induction hypothesis, the total representation costs of the two
children in these intervals are at most
\[
 CK_{q_1}S_1^2,
 \qquad
 CK_{q_2}S_2^2.
\]

For a retained parent, the recursive condition $\tau_\beta\le T$ and
\eqref{eq:recursive-cutoff} imply
\[
 A
 \ge
 cT^{-1}\Theta^{-1}B^{-\gamma}S_1S_2.
\]
The parent three-point representation contributes
\[
 C A^{-2}\Theta^{-3}
\]
by \eqref{eq:packet}. Summing $A^{-2}$ over all dyadic radial scales above
the displayed lower bound is controlled by the inverse square of that
lower bound. Multiplying by the two child cost bounds gives
\[
 CK_{q_1}K_{q_2}T^2B^{2\gamma}\Theta^{-1};
\]
the factors $S_1,S_2$ cancel completely. Summing over the dyadic angular
scales $\Theta\ge B$ gives
\[
 \sum_{\Theta\ge B}\Theta^{-1}\le\frac CB.
\]
Finally, summing over the possible child cutoff bins and over
$B=2^{-L}$ yields
\[
 Cq^2(q_1+1)(q_2+1)K_{q_1}K_{q_2}T^2
 \sum_{L\ge0}
 2^{-(2\gamma-1)L}(L+1)^2.
\]
The series converges because $2\gamma-1>0$. This proves the first line of
\eqref{eq:raw-block-induction}, provided the constants $K_q$ satisfy the
recursive bound stated below.

We next prove the discarded-volume estimate. If a parent partition piece
has $\tau_\beta>T$, then \eqref{eq:recursive-cutoff} implies
\[
 A
 <
 CT^{-1}B^{-\gamma}\Theta^{-1}\tau_1\tau_2.
\]
Once all other variables are fixed, integrating the closest neighbor over
this radial range contributes at most
\[
 CT^{-d}B^{-d\gamma}\Theta^{-d}\tau_1^d\tau_2^d.
\]
The second selected radial shell has volume at most $CB^d$. An angular bin
of scale $\Theta$ about either collinear direction has surface measure at
most $C\Theta^{d-1}$; the same estimate covers the final bin
$\Theta\asymp B$. After integrating these two selected neighbors, the
remaining factor is therefore
\[
 CT^{-d}
 B^{d(1-\gamma)}
 \Theta^{-1}
 \tau_1^d\tau_2^d.
\]

We must now average the child cutoff factors without assuming that they are
uniformly bounded independently of $L$. For a child with $q_1$ neighbors,
define
\[
 \widehat\tau_{q_1,L}(x,\mathbf U_1)
\]
to be the maximum $\tau_\beta$ among its positive-weight partition pieces
whose radial shell indices are at most $L$, with the fixed endpoint
convention for adjacent shells. Set this maximum to zero when no piece
qualifies and to one for an empty child. Since the parent has second
smallest radial scale $2^{-L}$, every child cutoff occurring in that parent
is bounded by $\widehat\tau_{q_1,L}$.

The second line of the induction hypothesis gives
\[
 \left|
 \{\mathbf U_1:\widehat\tau_{q_1,L}>t\}
 \right|
 \le
 K_{q_1}t^{-d},
\]
while \eqref{eq:raw-cutoff-maximum} gives the deterministic upper bound
\[
 \widehat\tau_{q_1,L}
 \le
 C^{q_1}2^{Cq_1(L+1)}.
\]
Using the layer-cake formula,
\begin{align}
 \int_{Q^{q_1}}
 \widehat\tau_{q_1,L}^{\,d}\,d\mathbf U_1
 &=
 \int_0^\infty
 d\,t^{d-1}
 \left|
 \{\mathbf U_1:\widehat\tau_{q_1,L}>t\}
 \right|\,dt
 \notag\\
 &\le
 1+
 dK_{q_1}
 \log\!\left(C^{q_1}2^{Cq_1(L+1)}\right)
 \notag\\
 &\le
 CK_{q_1}(q_1+1)(L+1).
 \label{eq:child-cutoff-moment}
\end{align}
For an empty child we use the value one directly. The same estimate holds
for the other child. Conditional on the center, the two child tuples are
disjoint, so after dropping the remaining radial restrictions their
integrals factor.

The interval partition used in Step~1 creates no additional multiplicative
factor in this volume calculation: its union is already contained in the
angular region of scale $\Theta$ whose measure was just estimated. Summing
over the angular scales $\Theta\ge B$ and then over $B=2^{-L}$ therefore
gives
\[
 Cq^2(q_1+1)(q_2+1)K_{q_1}K_{q_2}T^{-d}
 \sum_{L\ge0}
 2^{-\{d(1-\gamma)-1\}L}(L+1)^2.
\]
This series converges because
\[
 d(1-\gamma)-1>0.
\]
This proves the second line of \eqref{eq:raw-block-induction}.

We may consequently choose the constants recursively so that
\[
 K_q
 \le
 Cq^2(q_1+1)(q_2+1)K_{q_1}K_{q_2}.
\]
Because the recursion is balanced,
$q_1+q_2=q-2$ and $|q_1-q_2|\le1$. We claim that, for a sufficiently
large fixed $K$,
\[
 K_q\le\frac{K^q}{(q+1)^4}
 \qquad(q\ge0).
\]
Indeed, inserting the corresponding induction bounds for $K_{q_1}$ and
$K_{q_2}$ reduces the required estimate to the boundedness of
\[
 \frac{q^2(q+1)^4}
 {(q_1+1)^3(q_2+1)^3},
\]
and the two missing powers of $K$ absorb this bounded factor after $K$ is
chosen sufficiently large. In particular,
\[
 K_q\le C^q.
\]
This is the uniform exponential dependence on the number of neighbors that
we need.

\medskip\noindent\emph{Step 3: Passing to one fixed smooth frame.}
The preceding construction used polynomial coefficients in coordinates
centered at the current design point. We now embed all such polynomials,
for every degree, into one countable smooth family independent of $m$ and
$N$.

Choose a smooth compactly supported function $\chi$ that equals one on a
fixed neighborhood of $Q$, and choose $A_0$ larger than twice the largest
absolute coordinate on the support of $\chi$. For every multi-index
$\alpha\in\mathbb N^d$, define
\[
 \phi_\alpha(t)
 =
 \chi(t)A_0^{-|\alpha|}t^\alpha.
\]
Because $A_0$ dominates the coordinate size on the support of $\chi$, for
every fixed derivative order $b$,
\[
 \sum_{\alpha}
 \sum_{|\gamma|\le b}
 \|\partial^\gamma\phi_\alpha\|_\infty
 <\infty.
\]
Thus this countable family satisfies \eqref{eq:frame-assumption}; it also
contains the constant function $\phi_0=\chi=1$ on $Q$.

A degree-$q$ polynomial written in powers of $t-x$ can be translated into
powers of $t$ by the binomial theorem. Since $x$ ranges over the fixed cube,
and because of the factor $A_0^{-|\alpha|}$ in the frame, this conversion
increases its coefficient $\ell^1$ norm by at most $C^q$. We perform this
conversion only after all recursive multiplications have been completed.
The coefficients arising from the translation depend only on the center
$x$, so this conversion affects only the center factor in the separated
representation.

Fix a center with
\[
 q=m-1
\]
neighbors. Retain only partition indices satisfying
\[
 \tau_\beta
 \le
 T_{m,N}
 :=
 \frac{N}{mC^q}.
\]
For every retained index, convert its cardinal coefficient vector to the
fixed frame and denote the resulting vector by
$\widetilde p_\beta$. Define
\[
 E_{m,N}(\mathbf U)
 =
 \sum_{i=1}^m
 \sum_{\substack{\beta\in\mathcal B_{m-1}\\
                 \tau_\beta\le T_{m,N}}}
 W_{\beta,i}(\mathbf U)
 \widetilde p_{\beta,i}(\mathbf U)
 \widetilde p_{\beta,i}(\mathbf U)^T,
\]
where the subscript $i$ indicates that the corresponding construction is
centered at $U_i$.

Every summand is a nonnegative multiple of a rank-one positive
semidefinite matrix, so $E_{m,N}$ is positive semidefinite. Each retained
polynomial is cardinal for its center, and therefore
\[
 \phi(U_i)^TE_{m,N}(\mathbf U)\phi(U_j)
 =
 w_i(\mathbf U)\mathbf1_{\{i=j\}},
\]
where $w_i$ is the total retained partition weight for center $i$.
If the center differs from all of its neighbors, the full partition
weights are nonnegative and sum to one. If a neighbor coincides with the
center, every radial partition weight for that center vanishes. Thus
\[
 0\le w_i\le1
\]
in all cases.

At coincident configurations, retain the definition above, interpreting
each localized coefficient kernel
$W_{\beta,i}\widetilde p_{\beta,i}\widetilde p_{\beta,i}^T$ by the
pointwise extension supplied by the separated Fourier construction in
Steps~1--2. A summand whose center coincides with a neighbor extends by
zero, since its radial cutoff vanishes near that coincidence; coincidences
between neighbors away from the center retain their ordinary values.
Repeated locations therefore receive weight zero when used as centers,
while every retained cardinal polynomial centered elsewhere vanishes
there. Hence \eqref{eq:cardinal} holds pointwise at coincident
configurations as well.

We next prove the cost bound. For one center,
\eqref{eq:raw-block-induction} gives total raw representation cost at most
\[
 K_qT_{m,N}^2.
\]
Translation to the fixed frame and the outer product of the converted
coefficient vector change this by at most a factor $C^q$. Summing over the
$m$ centers and using $K_q\le C^q$ therefore gives
\[
 \operatorname{cost}(E_{m,N})
 \le
 C^mN^2,
\]
after enlarging $C$. This is \eqref{eq:geometry-cost}.

For the diagonal defect, the discarded weight for a fixed center is at
most one and can be nonzero only if there exists a positive-weight
partition index with
\[
 \tau_\beta>T_{m,N}.
\]
The second line of \eqref{eq:raw-block-induction} therefore gives
\[
 \int
 \bigl(1-w_i(\mathbf U)\bigr)\,d\mathbf U
 \le
 K_qT_{m,N}^{-d}.
\]
Summing over the $m$ centers and using $K_q\le C^q$ yields
\[
 \int
 \sum_{i=1}^m
 \bigl(1-w_i(\mathbf U)\bigr)\,d\mathbf U
 \le
 C^mN^{-d},
\]
which is \eqref{eq:geometry-error}.

The recursive construction used label-dependent tie breaking and splitting,
so the matrix just constructed need not yet be invariant under every
permutation of the design labels. Average it over all label permutations.
Positive semidefiniteness and the diagonal evaluation property are
preserved. The separated representation cost is convex under this average,
and the integral of the total diagonal defect is unchanged. Thus the
symmetrized kernel satisfies all three conclusions
\eqref{eq:cardinal}--\eqref{eq:geometry-cost} with the same bounds after
enlarging $C$.

Finally, for the consistency multipliers used later in
\eqref{eq:consistency-multiplier}, enlarge this fixed frame by the Fourier
functions introduced in the proof of
\eqref{eq:multiplier-cost-bound}. For each fixed response-degree cutoff
$R$, the exponential Fourier weights make the derivative sums of the
enlarged family finite, uniformly in $m$ and $N$. The original compact
cutoff gives the required smooth extensions to the prescribed open
neighborhood. No normalization such as
$\sum_a\phi_a^2=1$ is needed.

This proves \zcref{lem:geometry}.
\end{proof}

\section{The case \texorpdfstring{$0<s\le1$}{0 < s <= 1}}
\label{app:small-s}

We prove \zcref{thm:main2}. Throughout this section,
$0<s\le1$. All constants may depend on the fixed parameters defining the
model class but not on $n$.

\subsection{Upper bound}

\begin{proof}
We estimate $V$ by differencing two responses whose design points fall in
the same small cube. This removes the regression function up to its
H\"older variation across that cube.

Let
\[
 m
 =
 \left\lceil
 \max\left\{
   n^{2/(d+4s)},n^{1/d}
 \right\}
 \right\rceil,
 \qquad
 h=\frac1m,
\]
and partition $[0,1]^d$ into $m^d$ cubes of side length $h$. Call a cube
\emph{good} if it contains exactly two observations. If $b$ is good, let
$i_b,j_b$ be the indices of these observations and define
\[
 T_b=\frac{(Y_{i_b}-Y_{j_b})^2}{2}.
\]
Let $G$ be the number of good cubes, and set
\[
 \widehat V
 =
 \frac1G\sum_{b\ {\rm good}}T_b
\]
when $G>0$, with $\widehat V=0$ when $G=0$.

We first bound the conditional bias and variance of $T_b$. Write
\[
 Y_i=f(X_i)+\varepsilon_i.
\]
Conditional on the design, $\varepsilon_{i_b}$ and
$\varepsilon_{j_b}$ are independent, have mean zero, and have variance
$V$. Hence
\[
 \E(T_b\mid X_{1:n})
 =
 V+\frac12
 \bigl(f(X_{i_b})-f(X_{j_b})\bigr)^2.
\]
The two design points lie in a cube of diameter at most $\sqrt d\,h$.
Since $f$ is $s$-H\"older,
\[
 |f(X_{i_b})-f(X_{j_b})|
 \le Ch^s,
\]
and therefore
\[
 \left|
 \E(T_b\mid X_{1:n})-V
 \right|
 \le Ch^{2s}.
\]
The uniform conditional fourth-moment bound on the regression errors,
together with the uniform bound on $f$, also gives
\[
 \Var(T_b\mid X_{1:n})\le C.
\]
Statistics coming from distinct good cubes involve disjoint observations
and are therefore conditionally independent. Consequently, on $\{G>0\}$,
\begin{equation}\label{eq:small-s-risk-given-design}
 \E\!\left[
   (\widehat V-V)^2
   \,\middle|\,X_{1:n}
 \right]
 \le
 C\left(h^{4s}+\frac1G\right).
\end{equation}

It remains to control the number of good cubes. For a cube $b$, let
\[
 u_b=\Pp(X\in b).
\]
The upper and lower bounds on the design density imply
\[
 u_b\asymp h^d
\]
uniformly over the model. Moreover, the definition of $m$ gives
\[
 nh^d\le1.
\]
Thus the probability that a fixed cube contains exactly two observations is
\[
 \pi_b
 =
 \binom n2u_b^2(1-u_b)^{n-2}
 \asymp
 n^2h^{2d}.
\]
Since there are $m^d=h^{-d}$ cubes,
\[
 \mu:=\E G
 =
 \sum_b\pi_b
 \asymp
 n^2h^d.
\]

We also need concentration of $G$. For two distinct cubes, the same
two-block calculation used in \eqref{eq:ratio}, now with $k=2$, gives
\[
 \left|
 \frac{\Pp(b,c\text{ good})}{\pi_b\pi_c}-1
 \right|
 \le\frac Cn,
\]
because $u_b,u_c\le C/n$. Hence
\[
 \Var(G)
 \le
 \mu+\frac Cn\mu^2.
\]
Since every good cube contains two observations,
\[
 G\le\frac n2
\]
and therefore $\mu\le n/2$. It follows that
\[
 \Var(G)\le C\mu.
\]
The argument leading to \eqref{eq:inverse-count} consequently gives
\[
 \Pp(G=0)
 +
 \E\left[\frac{\mathbf1_{\{G>0\}}}{G}\right]
 \le
 \frac C\mu
 \le
 \frac{C}{n^2h^d}.
\]

On $\{G=0\}$ we have $\widehat V=0$, and
\[
 V^2
 \le
 \E(\varepsilon^4\mid X)
 \le
 C_4.
\]
Taking expectations in \eqref{eq:small-s-risk-given-design} and using the
preceding inverse-count estimate therefore gives
\begin{equation}\label{eq:small-s-upper}
 \sup_{\theta\in\Theta}
 \E_\theta(\widehat V-V)^2
 \le
 C\left(
   h^{4s}+\frac1{n^2h^d}
 \right).
\end{equation}

We now evaluate the two terms. If $d>4s$, then
\[
 m\asymp n^{2/(d+4s)},
 \qquad
 h\asymp n^{-2/(d+4s)},
\]
and hence
\[
 h^{4s}
 \asymp
 \frac1{n^2h^d}
 \asymp
 n^{-8s/(d+4s)}.
\]
If $d\le4s$, then
\[
 m\asymp n^{1/d},
 \qquad
 h\asymp n^{-1/d},
\]
so
\[
 h^{4s}\le Cn^{-1},
 \qquad
 \frac1{n^2h^d}\asymp n^{-1}.
\]
Thus, in all cases,
\[
 \sup_{\theta\in\Theta}
 \E_\theta(\widehat V-V)^2
 \le
 C\left(
   n^{-1}\vee n^{-8s/(d+4s)}
 \right).
\]
The preceding estimates hold for all sufficiently large $n$. For the
finitely many remaining sample sizes, the estimator identically equal to
zero has uniformly bounded risk, so enlarging $C$ proves the upper bound.
\end{proof}

\subsection{Lower bound}

\begin{proof}
We construct a mixture of regression functions whose variance parameter is
smaller than that of a fixed null model, while arranging that the
one-observation mixture distribution agrees exactly with the null. The
remaining discrepancy then appears only through correlations created by
using the same random regression function across all observations.

Fix a scale
\[
 h=\frac1m,
\]
where $m$ will be chosen below. Use the smooth periodic functions from
\eqref{eq:smooth-windows} at this scale. Thus there are at most
$Ch^{-d}$ functions $w_j$, each supported on a set of volume at most
$Ch^d$, with bounded overlap and
\[
 \sum_jw_j(x)^2=1,
 \qquad
 \|\nabla w_j\|_\infty\le\frac Ch.
\]
Let $(\omega_j)_j$ be independent random signs and define
\[
 f_\omega(x)
 =
 \kappa\sum_jw_j(x)\omega_j,
 \qquad
 \kappa=b_0h^s,
\]
where $b_0>0$ will be chosen sufficiently small.

Bounded overlap gives
\[
 \|f_\omega\|_\infty\le C\kappa.
\]
Moreover,
\[
 |f_\omega(x)-f_\omega(y)|
 \le
 C\kappa
 \min\left\{1,\frac{\|x-y\|}{h}\right\}.
\]
Since $0<s\le1$,
\[
 \min\{1,t\}\le t^s,
 \qquad t\ge0,
\]
and therefore
\[
 |f_\omega(x)-f_\omega(y)|
 \le
 Cb_0\|x-y\|^s.
\]
Thus, after taking $b_0$ sufficiently small, every realization
$f_\omega$ belongs to the prescribed $C^s$ ball, including at $s=1$.
The periodic construction also gives the required smooth extension to the
fixed neighborhood of the design cube.

The sign moments are
\[
 \E_\omega f_\omega(x)=0
\]
and, using $\sum_jw_j(x)^2=1$,
\[
 \E_\omega f_\omega(x)^2
 =
 \kappa^2.
\]
These two identities are the reason for this choice of prior.

Take the uniform design density $p\equiv1$. Let $Q_0$ be the
one-observation law with regression function $f=0$, variance $V=v$, and
the ternary conditional response distribution \eqref{eq:ternary}. For a
sign vector $\omega$, let $Q_\omega$ have regression function
$f_\omega$ and variance
\[
 V_\omega=v-\kappa^2.
\]
For sufficiently small $b_0$, we have
$|f_\omega|\le C\kappa$ and $\kappa^2$ arbitrarily small. The neighborhood
of $(f,V)=(0,v)$ verified after \eqref{eq:ternary} therefore ensures that
every $Q_\omega$ belongs to the model: the three response probabilities
remain positive, the variance remains in $(v_-,v_+)$, and the conditional
fourth-moment bound is satisfied.

Let
\[
 L_\omega
 =
 \frac{dQ_\omega}{dQ_0}
\]
be the one-observation likelihood ratio. From
\eqref{eq:ternary}, for each response value $y$,
\[
 q_y(f_\omega(x),v-\kappa^2)
 =
 \beta_y
 +\ell_yf_\omega(x)
 +r_y\bigl(f_\omega(x)^2-\kappa^2\bigr),
\]
where $\beta_y=q_y(0,v)$. Averaging over the signs and using the two
identities above gives
\[
 \E_\omega
 q_y(f_\omega(x),v-\kappa^2)
 =
 q_y(0,v)
\]
for every $x$ and $y$. Equivalently,
\begin{equation}\label{eq:small-s-one-observation-match}
 \E_\omega L_\omega(x,y)=1
\end{equation}
pointwise. Since the null cell probabilities are bounded below by a fixed
positive constant and $\|f_\omega\|_\infty\le C\kappa$, the same formula
also gives
\[
 \|L_\omega-1\|_\infty\le C\kappa.
\]

We now bound the discrepancy for $n$ observations. Let $\omega'$ be an
independent copy of $\omega$ and define
\[
 A(\omega,\omega')
 =
 \E_{Q_0}
 \bigl[
   (L_\omega-1)(L_{\omega'}-1)
 \bigr].
\]
For every fixed $\omega'$, the exact one-observation identity
\eqref{eq:small-s-one-observation-match} gives
\[
 \E_\omega A(\omega,\omega')=0.
\]

Changing one sign $\omega_j$ changes $f_\omega$ only on the support of
$w_j$. On this support it changes $f_\omega$ by at most $C\kappa$, and
the ternary formula therefore changes $L_\omega$ by at most $C\kappa$.
Since
\[
 \|L_{\omega'}-1\|_\infty\le C\kappa
\]
and the support of $w_j$ has volume at most $Ch^d$, changing $\omega_j$
changes $A(\omega,\omega')$ by at most
\[
 C\kappa^2h^d.
\]
There are at most $Ch^{-d}$ independent signs. The bounded-differences
version of Hoeffding's lemma therefore gives, uniformly in $\omega'$,
\[
 \E_\omega
 e^{tA(\omega,\omega')}
 \le
 \exp\left(Ct^2\kappa^4h^d\right),
 \qquad
 t\in\mathbb R.
\]

Let
\[
 \overline Q_n
 =
 \E_\omega Q_\omega^{\otimes n}
\]
be the mixture of the $n$-observation alternative laws. Its likelihood
ratio relative to $Q_0^{\otimes n}$ is
\[
 \frac{d\overline Q_n}{dQ_0^{\otimes n}}
 =
 \E_\omega
 \prod_{\ell=1}^nL_\omega(Z_\ell).
\]
Squaring and integrating under $Q_0^{\otimes n}$ gives
\begin{align*}
 1+
 \chi^2(\overline Q_n,Q_0^{\otimes n})
 &=
 \E_{\omega,\omega'}
 \left[
   \E_{Q_0}(L_\omega L_{\omega'})
 \right]^n\\
 &=
 \E_{\omega,\omega'}
 \bigl(1+A(\omega,\omega')\bigr)^n.
\end{align*}
Since
\[
 1+A(\omega,\omega')
 =
 \E_{Q_0}(L_\omega L_{\omega'})>0,
\]
we may use $1+x\le e^x$ to obtain
\begin{equation}\label{eq:small-s-chi-square}
 \begin{aligned}
 \chi^2(\overline Q_n,Q_0^{\otimes n})
 &\le
 \E_{\omega,\omega'}e^{nA(\omega,\omega')}-1\\
 &\le
 \exp\left(Cn^2\kappa^4h^d\right)-1.
 \end{aligned}
\end{equation}

Choose
\[
 m=\left\lceil2n^{2/(d+4s)}\right\rceil,
 \qquad
 h=\frac1m.
\]
Then
\[
 n^2\kappa^4h^d
 =
 b_0^4n^2h^{d+4s}
 \le
 b_0^4.
\]
By decreasing the fixed constant $b_0$ if necessary,
\eqref{eq:small-s-chi-square} makes
$\chi^2(\overline Q_n,Q_0^{\otimes n})$ arbitrarily small. Since
\[
 \TV(P,Q)\le\frac12\sqrt{\chi^2(P,Q)},
\]
we may in particular arrange
\[
 \TV(\overline Q_n,Q_0^{\otimes n})\le\frac14.
\]

Every model in the alternative mixture has variance
$v-\kappa^2$, whereas the null model has variance $v$. Thus the two
variance values are separated by
\[
 \Delta=\kappa^2=b_0^2h^{2s}
 \asymp
 n^{-4s/(d+4s)}.
\]
Applying the testing inequality \eqref{eq:testing}, with the null point
mass as one prior and the sign mixture as the other, gives
\[
 \mathfrak r_n
 \ge
 c\,n^{-4s/(d+4s)}.
\]
Together with the parametric lower bound
\eqref{eq:parametric-lower},
\[
 \mathfrak r_n\ge cn^{-1/2},
\]
we conclude that
\[
 \mathfrak r_n
 \ge
 c\left(
   n^{-1/2}\vee n^{-4s/(d+4s)}
 \right).
\]
Combining this with the upper bound proved above establishes
\zcref{thm:main2}. When $d=4s$, the two powers coincide, so the same
argument includes the boundary case.
\end{proof}

\bibliographystyle{amsplain}
\enlargethispage{2\baselineskip}
\bibliography{variance_estimation_refs}
\end{document}